\pdfoutput=1
\documentclass[11pt,a4paper]{article}

\usepackage[T1]{fontenc}
\usepackage[utf8]{inputenc}
\usepackage[english]{babel}
\usepackage{amsmath,amssymb,amsfonts,amsthm}
\usepackage{graphicx}
\usepackage{xcolor}
\usepackage[margin=2.6cm]{geometry}
\usepackage{tikz}
\usetikzlibrary{shapes,patterns}
\usepackage{caption}
\usepackage{subcaption}
\usepackage{float}
\usepackage[hidelinks]{hyperref}

\theoremstyle{plain}
\newtheorem{theorem}{Theorem}[section]
\newtheorem{proposition}{Proposition}[section]
\newtheorem{corollary}{Corollary}[section]

\theoremstyle{definition}
\newtheorem{definition}{Definition}[section]
\newtheorem{example}{Example}[section]

\theoremstyle{remark}
\newtheorem{remark}{Remark}[section]

\DeclareMathOperator*{\argmax}{arg\,max}
\DeclareMathOperator*{\gra}{gph}
\DeclareMathOperator*{\nep}{NEP}
\DeclareMathOperator*{\gnep}{GNEP}
\def\R{\mathbb{R}}
\def\nn{\mathbb{N}}
\def\qq{\mathbb{Q}}
\newcommand{\tos}{\rightrightarrows} % point-to-set mappings

\begin{document}

\title{Sequential Stability of the Value Function and the Solution Mapping
in Berge's Maximum Theorem via Variational Convergence
\thanks{This paper has received funding from ANID--Chile through projects
Fondecyt 1220687 (L\'opez) and Fondecyt 1200525 (Fierro).}}

\author{%
John Cotrina\thanks{Universidad del Pac\'ifico, Lima, Per\'u.
\texttt{cotrina\_je@up.edu.pe}}
\and
Ra\'ul Fierro\thanks{Instituto de Matem\'aticas, Pontificia Universidad
Cat\'olica de Valpara\'{\i}so, Valpara\'iso, Chile.
\texttt{raul.fierro@pucv.cl}}
\and
Rub\'en L\'opez\thanks{Departamento de Matem\'atica, Universidad de
Tarapac\'a, Arica, Chile. \texttt{rlopezm@academicos.uta.cl}}
}

\date{January 27, 2026}

\maketitle

\begin{abstract}
Berge's maximum theorem ensures the continuity of the value function and the
upper semicontinuity of the solution mapping in parametric optimization
problems. This theorem plays a central role in optimization theory, game
theory, and dynamic programming. Motivated by the inherent inaccuracies in
optimization data, this paper investigates the stability of such problems under
sequential perturbations of both the objective function and the feasible
mapping. The analysis focuses on the convergence of sequences of value functions
and solution mappings via variational approximations of the data. To this end,
we employ lower and upper continuous, epi- and hypo-convergence notions for
functions, together with lower and upper continuous and graphical convergence
notions for multifunctions. In addition, we study some relationships among these
types of convergence and provide examples and counterexamples associated with
the corresponding notions. Our results extend and complement existing stability
results in the literature. We provide applications to generalized Nash
equilibrium problems, where stability is obtained via a direct approach, as well
as to finite-horizon dynamic programming models under novel perturbation
assumptions.
\end{abstract}

\noindent\textbf{Keywords:} Berge's maximum theorem $\cdot$ Variational
convergence $\cdot$ Finite-horizon dynamic programming $\cdot$ Generalized Nash
games

\medskip
\noindent\textbf{Mathematics Subject Classification (2020):}
46N10 $\cdot$ 91B50 $\cdot$ 49K21

\section{Introduction}\label{intro}

Berge's maximum theorem~\cite{Be63} guarantees the continuity of the value function and
the upper semicontinuity of the solution mapping of a parametric optimization problem. The former property has been used in existence theorems and characterization results for dynamic programming. The latter property has been used when applying Kakutani-type fixed
point theorems.

Berge's maximum theorem holds significant relevance in fields such as economic theory, optimal control, and optimization theory. For instance, in demand
theory, it is concerned with an agent's optimal consumption concerning prices and income, while in capital theory, with the optimal investment strategy based on the existing
capital stock.

The stability of optimization problems is an important property due to the inherent inaccuracy of such problems. The inaccuracy stems from various sources, including predictive
errors in forecasting certain data inputs (such as future demands or returns), measurement errors in empirical data (such as parameters of devices or processes), implementation errors
during computation (including approximations and rounding), and system errors in mathematical modeling. As a result, the solution obtained is, at best, an approximation to the true
solution. In order for this approximate solution to be practically useful, decision-makers require access to stability information about the problem that depends on variations of the
data. See~\cite[Chapter~7]{CPS92} for a detailed explanation of this topic but for linear complementarity problems.

The stability analysis for the parametric optimization problem according to Berge's maximum theorem, consists of studying convergence properties of the sequences of value functions and of solution
mappings when the data of the problem (objective function and feasible mapping) are approximated via convergence notions. Zolezzi~\cite{Zo84}
studied the stability of the value function at a fixed parameter. Lignola and Morgan~\cite{LM92,LM97} studied the stability of the value function and of the solution mapping with respect to
perturbations of the objective function and of the feasible mapping.
Our results extend, complement and shed new
light on the results in~\cite{LM92,LM97,Zo84}.

To approximate the data of the parametric problem, we use lower and upper continuous, epi- and hypo-convergence notions for functions, together with continuous and graphical convergence notions for multifunctions. Some references on these convergence notions are the books of Attouch~\cite{At84}, Beer~\cite{Be93}, Hu and Papageorgiou~\cite{HP97}, Rockafellar and Wets~\cite{RW09}, and Royset and Wets~\cite{RW21}.

As a result of our stability analysis, we establish the continuous and the hypo-convergence of approximate value functions and the outer continuous convergence of approximate solution mappings.

As Berge's maximum theorem holds significant relevance in generalized Nash games, we leverage our findings to examine the approximation of generalized Nash equilibria.
We obtain stability results for generalized Nash games via a direct approach in contrast to \cite{DK24,RW19} where the stability results are obtained for equivalent formulations of generalized Nash games.
Contrary to optimization theory, there are limited studies
addressing the approximation of generalized Nash equilibria. To the best of our knowledge, the works~\cite{CaEtAl86,DK16,GP09,MR99} primarily concentrate on the classical Nash
game, while \cite{DK24,MS08,RW19} address generalized Nash games but in a manner that differs from ours. As another application of our results, we perturb a finite-horizon dynamic programming model by a sequence of plans
and multifunctions representing the feasible sets of plans. We
prove the stability of the model under assumptions that differ from those in~\cite{GT09,G-etal21}.

The paper is organized as follows. In Section~\ref{sec:2}, we fix the notation and recall some preliminaries on variational convergence of functions and multifunctions. Section~\ref{sec:3} is devoted to present our main results. In Section~\ref{sec:4}, we apply our results to generalized Nash equilibrium problems and to finite-horizon dynamic programming models. Finally, in Section~\ref{sec:5}, we present some conclusions.

\section{Notation and preliminaries}\label{sec:2}

We denote by $x=(x_1,\ldots,x_n)$ a vector from $\mathbb{R}^n$, by $y=(y_1,\ldots ,y_m)$ a vector from $\R^m$, by $\mathbb{B}$ the closed unit ball in $\mathbb{R}^n$, and by
$\overline{\R}=\R\cup\{\pm\infty\}$ the extended set of real numbers. For an extended-valued function $f\colon\R^\ell\to\overline{\R}$, we denote
by $\mathrm{dom}_Uf=\{x\in\R^\ell\colon f(x)>-\infty\}$ its upper domain; by $\mathrm{dom}_Lf=\{x\in\R^\ell\colon f(x)<+\infty\}$ its lower domain; by
$\mathrm{epi}\,f:=\{(x,\lambda)\in \R^{\ell+1}\colon f(x)\le\lambda\}$ its epigraph; and by $\mathrm{hyp}\,f:=\{(x,\lambda)\in \R^{\ell+1}\colon \lambda\le f(x)\}$ its hypograph. We
say
that $f$ is upper proper (resp. lower proper) if $f(x)<+\infty$ (resp. $f(x)>-\infty$) for all $x\in\R^\ell$ and $\mathrm{dom}_U f\ne\emptyset$  (resp. $\mathrm{dom}_L f\ne\emptyset$). Clearly,
$\mathrm{hyp}\,f$ (resp. $\mathrm{epi}\,f$) is nonempty iff $\mathrm{dom}_U f$ (resp. $\mathrm{dom}_L f$) is nonempty. Therefore, when dealing with the hypograph (resp. epigraph) of a function, we assume that the function has a nonempty upper (resp. lower) domain.

We recall convergence notions for sets from~\cite{RW09}. For a sequence of sets $\{C_k\}$ from $\mathbb{R}^\ell$,
$\limsup\nolimits_kC_k:=\{x\in\R^\ell\colon\exists\, x^{k_j}\in C_{k_j}\;\mbox{s.t.}\; x^{k_j}\to x\}$ is its outer limit and $\liminf\nolimits_kC_k:=\{x\in\R^\ell\colon\exists\,
x^{k}\in
C_{k}\;\mbox{s.t.}\; x^{k}\to x\}$ is its inner limit, where $\{x^{k_j}\}$ is a subsequence of $\{x^k\}$. We say that $\{C_k\}$ converges in the sense of
Painlev\'e--Kuratowski to $C$, denoted by $C_k\to C$ or $\lim_k C_k=C$,  if $\limsup_kC_k=C=\liminf_kC_k$, or equivalently if
$\limsup_kC_k\subset C\subset\liminf_kC_k$.

A sequence $\{C_k\}$ is said to be: eventually bounded if there exists $N\in\mathbb{N}$ such that $\bigcup_{k\ge N}C_k$ is bounded; strongly eventually bounded if it
is eventually bounded and $\bigcup_{k\ge j}C_k\ne\emptyset$ for all $j\in\nn$; nonempty-valued if $C_k\ne\emptyset$ for all $k\in\nn$; and eventually nonempty-valued (resp.
empty-valued) if there exists~$N\in\mathbb{N}$ such that $C_k\ne\emptyset$ (resp. $C_k=\emptyset$) for all $k\ge N$. Clearly, nonempty-valuedness implies
eventually nonempty-valuedness which in turn implies that $\bigcup\nolimits_{k\ge j}C_k\ne\emptyset$ for all $j\in\nn$. If $\{C_k\}$ is eventually bounded and eventually
nonempty-valued, then it is strongly eventually bounded. The following conditions are equivalent:
\begin{itemize}
\item $\bigcup\nolimits_{k\ge j}C_k\ne\emptyset$, for all $j\in\nn$;
\item $\{C_k\}$ is not eventually empty-valued;
\item There exists a subsequence $\{C_{k_j}\}$ such that $C_{k_j}\ne\emptyset$, for all $j\in\nn$.
\end{itemize}
We establish conditions under which $\limsup_k C_k$ is a nonempty compact set. To do this, we recall the `escaping to the horizon' property $C_k\to\emptyset$ (or equivalently
$\limsup\nolimits_k C_k=\emptyset$). The following assertions are equivalent (see~\cite[Corollary~4.11]{RW09}):
\begin{itemize}
\item $\limsup\nolimits_k C_k=\emptyset$;
\item For every $\rho>0$ there exists $N\in\mathbb{N}$ such that $C_{k}\cap \rho\mathbb{B}=\emptyset$, for all $k\ge N$;
\item $d_{C_k}(0)\to +\infty$.
\end{itemize}
Here $d_{C_k}(0)$ is the distance from $0$ to $C_k$. From this, we infer that $\limsup\nolimits_k C_k\ne\emptyset$ iff there exists $\rho>0$ such that $\bigcup\nolimits_{k\ge j}(C_k\cap \rho\mathbb{B})\ne\emptyset$, for all $j\in\nn$.

\begin{proposition}\label{prop:p2}
If $\{C_k\}$ is strongly eventually bounded, then
\begin{equation}\label{eq:limsup}
\limsup\nolimits_k C_k\mbox{ is nonempty and compact}.
\end{equation}
Moreover, for every $\varepsilon>0$ there exists $N\in\mathbb{N}$ such that
\begin{equation}\label{eq:inclusion}
\bigcup\nolimits_{k\ge N}C_k\subset \{x\in\R^\ell\colon d_{\, \limsup_k\! C_k}(x)<\varepsilon\}.
\end{equation}
\end{proposition}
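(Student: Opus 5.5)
We argue directly from the definitions of the outer limit. By strong eventual boundedness there are $N_0\in\nn$ and $\rho>0$ such that $\bigcup_{k\ge N_0}C_k\subset\rho\mathbb{B}$, and $\bigcup_{k\ge j}C_k\ne\emptyset$ for every $j$.

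\emph{Nonemptiness.} The second condition lets us pick indices $k_1<k_2<\cdots$ with $k_1\ge N_0$ and points $x^{k_j}\in C_{k_j}$. All these points lie in $\rho\mathbb{B}$, so by Bolzano--Weierstrass a further subsequence converges to some $x$. This $x$ belongs to $\limsup_k C_k$. Equivalently, the criterion recalled before the proposition applies with this $\rho$, since $C_k\cap\rho\mathbb{B}=C_k$ for $k\ge N_0$.

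\emph{Compactness.} Closedness of the outer limit is standard (\cite[Proposition~4.4]{RW09}). It can also be checked by a diagonal argument: if $x_i\in\limsup_k C_k$ and $x_i\to x$, choose increasing indices $k_i$ and points $y_i\in C_{k_i}$ with $|y_i-x_i|<1/i$; then $y_i\to x$. For boundedness, every point of the outer limit is the limit of points $x^{k_j}\in C_{k_j}$ with $k_j\to\infty$. Eventually $k_j\ge N_0$, so $x^{k_j}\in\rho\mathbb{B}$, and hence the limit lies in the closed ball $\rho\mathbb{B}$. Therefore $\limsup_k C_k\subset\rho\mathbb{B}$ is compact, which gives \eqref{eq:limsup}.

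\emph{Inclusion \eqref{eq:inclusion}.} We argue by contradiction; this is the only step needing care. Suppose that for some $\varepsilon>0$ and every $N$ there are $k\ge N$ and $x\in C_k$ with $d_{\limsup_k C_k}(x)\ge\varepsilon$. This produces indices $k_j\to\infty$, which we may take strictly increasing and $\ge N_0$, and points $x^{k_j}\in C_{k_j}\subset\rho\mathbb{B}$ with $d_{\limsup_k C_k}(x^{k_j})\ge\varepsilon$. Extracting a convergent subsequence gives a limit $\bar x\in\limsup_k C_k$. The distance function to a nonempty set is $1$-Lipschitz, so $d_{\limsup_k C_k}(\bar x)\ge\varepsilon>0$. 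But $d_{\limsup_k C_k}(\bar x)=0$ because $\bar x$ lies in the set, a contradiction.

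The main point to watch is that the extracted indices are strictly increasing and beyond $N_0$. This guarantees both membership of the limit in the outer limit and the uniform bound by $\rho$.
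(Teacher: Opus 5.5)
Your proof is correct, but it is organized differently from the paper's. The paper sets $A_j:=\bigcup_{k\ge j}C_k$ and uses the identity $\limsup_k C_k=\bigcap_j\mathrm{cl}\,A_j$ from \cite[Exercise~4.2(b)]{RW09}. The sets $\mathrm{cl}\,A_j$ are nested, nonempty, and compact for large $j$, so Cantor's intersection theorem gives nonemptiness and compactness of the outer limit at once. The inclusion \eqref{eq:inclusion} then follows from the same theorem applied to the nested closed sets $\mathrm{cl}\,A_j\cap\{x\colon d_{\limsup_k C_k}(x)\ge\varepsilon\}$. Their intersection is empty, so one of them must already be empty.

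You instead work directly from the sequential definition of the outer limit. You use Bolzano--Weierstrass along strictly increasing indices for nonemptiness, and separate closedness and boundedness checks for compactness. For the inclusion you argue by contradiction, extracting a subsequence and using the $1$-Lipschitz property of the distance function. Your version is self-contained, since it does not need the tail-closure representation of $\limsup_k C_k$, and it gives the explicit bound $\limsup_k C_k\subset\rho\mathbb{B}$. The cost is more bookkeeping about indices and a separate argument for closedness. The paper's version is shorter and treats both assertions uniformly, as instances of the fact that nested nonempty compact sets have nonempty intersection. Both arguments ultimately rest on the compactness of the ball $\rho\mathbb{B}$.
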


\begin{proof}
We prove the first part. Let us denote $A_j:=\bigcup_{k\ge j}C_k$ for $j\in\nn$. By hypothesis, $A_j$ is nonempty for all $j\in\nn$ and bounded for $j$ large enough.
Hence
$\mathrm{cl}\,A_j$ is nonempty and compact for $j$ large enough. As $\mathrm{cl}\,A_{j+1}\subset \mathrm{cl}\,A_j$ for all~$j\in\nn$, by Cantor's intersection theorem, we infer that
$\bigcap_{j}\mathrm{cl}\,A_j$ is nonempty and compact. The result follows since $\bigcap_{j}\mathrm{cl}\,A_j=\limsup_k C_k$ (see~\cite[Exercise~4.2$(b)$]{RW09}).

We prove the second part. Let us denote $C_\varepsilon:= \{x\in\R^\ell\colon d_{\,\limsup_n\! C_n}(x)<\varepsilon\}$. As $\bigcap_{j}\mathrm{cl}\,A_j\subset C_\varepsilon$, we have
$(\bigcap_{j}\mathrm{cl}\,A_j)\cap C^{\mathrm{c}}_\varepsilon=\emptyset$; i.e., $\bigcap_{j}(\mathrm{cl}\,A_j\cap C^{\mathrm{c}}_\varepsilon)=\emptyset$ with the sets in parentheses
being closed. By Cantor's intersection theorem, there exists $N$ such that $\mathrm{cl}\,A_{N}\cap C^{\mathrm{c}}_\varepsilon=\emptyset$; i.e.,
$\bigcup\nolimits_{k\ge N}C_k\subset C_\varepsilon$.
\end{proof}

\begin{remark}
\begin{enumerate}
	\item Condition~\eqref{eq:limsup} in Proposition~\ref{prop:p2} does not imply that
$\{C_k\}$ is eventually bounded.
Indeed, for $C_k=[0,1]\cup\{k\}$ for all $k$, we have $\lim_k C_k=[0,1]$ but $\{C_k\}$ is not eventually bounded.

\item Condition~\eqref{eq:inclusion} does not guarantee that $\{C_k\}$ is eventually bounded either. This is demonstrated by setting $C_k=\R^\ell$ for all $k$.
\end{enumerate}
\end{remark}

We recall notions of set-valued analysis from~\cite{RW09}. A multifunction $\Phi:\R^m\tos\R^n$ is a mapping that associates to any vector
$y\in \R^m$ a set $\Phi(y)\subset\R^n$. We denote by $\mathrm{dom}\,\Phi:=\{y\in \R^m\colon \Phi(y)\ne\emptyset\}$ its domain and by $\mathrm{gph}\,\Phi:=\{(y,x)\in \R^{m+n}\colon
x\in\Phi(y)\}$ its graph. A mapping $\Phi$ is said to be proper if it has a nonempty domain. For the multifunction $\Psi\colon\R^\ell\tos\R^m$, the composition $\Phi\circ\Psi\colon\R^\ell\tos\R^n$ of $\Phi$ and $\Psi$ at~$x$ is defined by $(\Phi\circ\Psi)(x):=\bigcup_{y\in\Psi(x)}\Phi(y)$.
We say that $\Phi$ is: outer semicontinuous (osc) if
$\limsup_k\Phi(y^k)\subset\Phi(y)$ for all $y^k\to y$; inner semicontinuous (isc) if $\Phi(y)\subset\liminf_k\Phi(y^k)$ for all $y^k\to y$; upper semicontinuous (usc) if for any $y$ and any open set $V$ containing $\Phi(y)$ there is a neighborhood~$U$ of $y$ such that $\Phi(U)\subset V$; lower semicontinuous (lsc) if for any $y$ and any open set $V$ with $\Phi(y)\cap V\ne\emptyset$ there is a neighborhood $U$ of $y$ such that $\Phi(z)\cap V\ne\emptyset$ for every $z\in U$; continuous if it is osc
and isc; $K$-continuous if it is usc and lsc; locally bounded if for any $y$ there is a neighborhood $V$ of $y$ such that $\Phi(V)$ is bounded; $N$-valued if $\Phi(y)$ has property $N$ for every $y$ (e.g. nonempty-valued, closed-valued, convex-valued); and
uniformly bounded if $\Phi(\R^m)$ is bounded. For $\Phi,\Psi\colon\R^m\tos\R^n$, we write $\Phi\subset\Psi$ if $\Phi(y)\subset\Psi(y)$ for all $y$ and $\Phi=\Psi$ if
$\Phi\subset\Psi$ and $\Psi\subset\Phi$.

\begin{quote}
{\sc Assumption:} Unless otherwise stated, we assume that $u\colon\R^{n+m}\to\overline{\R}$ is an extended-valued function with nonempty upper (resp. lower) domain when dealing with its hypograph (resp. epigraph) and $\Phi\colon \R^m\tos \R^n$ is a proper
multifunction.
\end{quote}

We study the stability of the parametric optimization problem according to Berge's maximum theorem:
\begin{equation}\tag{$\mathcal{P}_y$}\label{eq:Py}
v(y):=\sup_{x\in\Phi(y)}u(x,y),
\end{equation}
where $y\in \R^m$ is a parameter vector, $u$ is an objective function and $\Phi$ is a feasible
mapping. The function $v\colon\R^{m}\to\overline{\R}$ is called the value function and the solution set of this problem defines the solution mapping $S\colon \R^m\tos \R^n$; i.e.,
\[
S(y):=\argmax_{x\in\Phi(y)}\;u(x,y)=\{x\in\Phi(y)\colon v(y)=u(x,y)\}.
\]
When $\Phi(y)=\emptyset$, as usual, we set $v(y)=-\infty$ and $S(y)=\emptyset$.

We recall a result on the continuity of the value function and of the solution mapping. It is extended by us in Corollary~\ref{cor:berge-variant}.

\begin{proposition}[{\cite[Propositions~3.1--3.4]{HP97}}]\label{prop:berge-prop}
\begin{description}
\item[$(a)$] If $u\colon\R^{n+m}\to\overline{\R}$ is lsc and $\Phi\colon \R^m\tos \R^n$ is nonempty-valued and lsc, then $v\colon\R^{m}\to\overline{\R}$ is lsc.
\item[$(b)$] If $u\colon\R^{n+m}\to\overline{\R}$ is usc and $\Phi\colon \R^m\tos \R^n$ is nonempty-valued, compact-valued and usc, then
  $v\colon\R^{m}\to\overline{\R}$ is usc.
\item[$(c)$] $($Berge's maximum theorem$)$ If $u\colon\R^{n+m}\to\R$ is continuous and $\Phi\colon \R^m\tos \R^n$ is nonempty-valued, compact-valued and $K$-continuous, then $v\colon\R^{m}\to \overline{\R}$ is
  continuous and $S\colon \R^m\tos \R^n$ is nonempty-valued, compact-valued and usc.
\end{description}
\end{proposition}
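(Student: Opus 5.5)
We treat the three parts separately, using the sequential descriptions of semicontinuity for real-valued functions and the convention $\sup$ over a nonempty set. Throughout, $\Phi$ nonempty-valued guarantees $v(y)>-\infty$ whenever $u$ is finite somewhere on $\Phi(y)$, and in any case $v(y)$ is a well-defined element of $\overline{\R}$.

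For $(a)$, we fix $y$ and a real number $\alpha<v(y)$, and aim to find a neighborhood $U$ of $y$ with $v>\alpha$ on $U$. We proceed in three steps.
\begin{enumerate}
\item By definition of the supremum, pick $\bar x\in\Phi(y)$ with $u(\bar x,y)>\alpha$.
\item Since $u$ is lsc at $(\bar x,y)$, there are open sets $V\ni\bar x$ and $W\ni y$ with $u>\alpha$ on $V\times W$.
\item Since $\Phi$ is lsc and $\Phi(y)\cap V\ne\emptyset$, there is a neighborhood $U\subset W$ of $y$ with $\Phi(z)\cap V\ne\emptyset$ for $z\in U$.
\end{enumerate}
Choosing $x_z\in\Phi(z)\cap V$ then gives $v(z)\ge u(x_z,z)>\alpha$, which is lower semicontinuity.

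For $(b)$, we fix $y$ and $\alpha>v(y)$, and seek a neighborhood $U$ of $y$ on which $v<\alpha$.
\begin{enumerate}
\item For each $x\in\Phi(y)$, we have $u(x,y)\le v(y)<\alpha$. Upper semicontinuity of $u$ yields open $V_x\ni x$ and $W_x\ni y$ with $u<\alpha$ on $V_x\times W_x$. (If $u(x,y)$ were $-\infty$ this still works, since $\alpha$ is real.)
\item By compactness of $\Phi(y)$, finitely many $V_{x_1},\dots,V_{x_p}$ cover $\Phi(y)$. Set $V=\bigcup_i V_{x_i}$ and $W=\bigcap_i W_{x_i}$.
\item Upper semicontinuity of $\Phi$ gives a neighborhood $U\subset W$ with $\Phi(U)\subset V$. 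Then $u(x,z)<\alpha$ for all $z\in U$ and $x\in\Phi(z)$.
\end{enumerate}
The delicate point is that this gives only $\sup_{x\in\Phi(z)}u(x,z)\le\alpha$, not a strict inequality. We resolve it by running the argument with $\alpha'\in(v(y),\alpha)$ in place of $\alpha$, so that $v(z)\le\alpha'<\alpha$. When $v(y)=+\infty$ there is nothing to prove.

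For $(c)$, continuity of $v$ is immediate from $(a)$ and $(b)$. Since $u$ is real-valued and continuous and $\Phi(y)$ is nonempty and compact, the supremum is attained by Weierstrass. Hence $v$ is real-valued and $S(y)\ne\emptyset$. Moreover, $S(y)=\{x\in\Phi(y)\colon u(x,y)\ge v(y)\}$ is closed in the compact set $\Phi(y)$, so $S$ is compact-valued.

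The main obstacle is upper semicontinuity of $S$. We argue by contradiction, taking an open $V\supset S(y)$ and sequences $y^k\to y$, $x^k\in S(y^k)\setminus V$.
\begin{enumerate}
\item Since $\Phi$ is usc and compact-valued, the sequence $\{x^k\}$ is eventually contained in any neighborhood $\Phi(y)+\varepsilon\mathbb{B}$. By a standard compactness argument, $\{x^k\}$ has a subsequence converging to some $\bar x$.
\item Closedness of the graph of a usc closed-valued mapping gives $\bar x\in\Phi(y)$.
\item By continuity of $u$ and $v$, $u(\bar x,y)=\lim u(x^{k_j},y^{k_j})=\lim v(y^{k_j})=v(y)$, so $\bar x\in S(y)\subset V$.
\item But $\bar x\notin V$, because $V^{\mathrm c}$ is closed and contains every $x^k$. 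This is a contradiction, so $S$ is usc.
\end{enumerate}
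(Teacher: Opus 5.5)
Your proof is correct in all three parts, but it takes a different route from the paper's. The paper does not prove this proposition; it quotes it from \cite{HP97}. Its own derivation, which also extends the result, is Corollary~\ref{cor:berge-variant}. There it takes stationary sequences $u^k\equiv u$, $\Phi^k\equiv\Phi$ in its perturbation results and uses two equivalences from \cite{RW09}: lsc coincides with isc, and usc with compact values coincides with osc plus local boundedness. Concretely:
\begin{itemize}
\item Part (a) becomes the sequential argument of Proposition~\ref{prop:main0}$(b)$.
\item Part (b) becomes Proposition~\ref{prop:main1}. There your finite-subcover step is replaced by using local boundedness to extract a convergent subsequence of points $x^{k}\in\Phi(y^{k})$ with $u(x^{k},y^{k})>a$, and then invoking osc of $\Phi$.
\item For (c), Theorem~\ref{MT0} shows that $S$ is osc and locally bounded (since $S\subset\Phi$), and this is converted back into usc with compact values.
\end{itemize}

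You instead work directly with the neighborhood definitions. This is the classical argument and is fully self-contained. Your contradiction proof of usc of $S$ in effect inlines the osc/local-boundedness equivalence that the paper cites. The paper's route relies on those cited equivalences. In exchange, it shows Berge's theorem as the unperturbed case of its stability theorems, and it immediately gives the extensions recorded after the corollary: proper rather than nonempty-valued $\Phi$ in (a)--(b), and extended-valued $u$ in (c). Your arguments for (a) and (b) never use nonempty-valuedness either.

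Two minor points. In step 3 of (c) you do not need full continuity of $v$: the chain $v(y)\le\liminf_j v(y^{k_j})=u(\bar x,y)\le v(y)$ already follows from lower semicontinuity of $v$ and $\bar x\in\Phi(y)$, which is exactly how Theorem~\ref{MT0}$(a)$ runs. Also, your opening sentence announces sequential descriptions of semicontinuity, but (a)--(b) actually use the neighborhood ones; this is only cosmetic.
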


We study the behavior of the value function $v$ and of the solution mapping $S$
when the data $u$ and $\Phi$ are approximated by $\{u^k\}$ and~$\{\Phi^k\}$ via variational convergence notions. We denote the
respective
approximations of $v$ by~$\{v^k\}$ and of $S$ by~$\{S^k\}$:
\begin{equation}\label{eq:aprox-val-funct-sol-mult}
v^k(y):=\sup_{x\in\Phi^k(y)}u^k(x,y)\;\;\mbox{ and }\;\; S^k(y):=\{x\in\Phi^k(y)\colon v^k(y)=u^k(x,y)\}.
\end{equation}
We will establish convergence results for sequences $\{v^k\}$ and~$\{S^k\}$. To do this, we recall hypo- and epi-limits from~\cite{RW09}.

\begin{definition}[{\cite{RW09}}]\label{def:h-e-limits}  For a sequence $f^k\colon \R^\ell\to\overline{\R}$, its
\begin{itemize}
\item lower hypo-limit is the
function $h\mbox{-}\!\liminf_k f^k$ whose hypograph is
\begin{eqnarray*}
	\mathrm{hyp}(h\mbox{-}\!\liminf\nolimits_k f^k)&=&\liminf\nolimits_k\mathrm{hyp}\,f^k.
\end{eqnarray*}
\item upper hypo-limit is the
function $h\mbox{-}\!\limsup_k f^k$ whose hypograph is
\begin{eqnarray*}
	\mathrm{hyp}(h\mbox{-}\!\limsup\nolimits_k f^k)&=&\limsup\nolimits_k\mathrm{hyp}\,f^k.
\end{eqnarray*}
\item lower epi-limit is the function $e\mbox{-}\!\liminf_k f^k$ whose epigraph is
\begin{eqnarray*}
	\mathrm{epi}(e\mbox{-}\!\liminf\nolimits_k f^k)&=&\limsup\nolimits_k \mathrm{epi}\,f^k.
\end{eqnarray*}
\item upper epi-limit is the function $e\mbox{-}\!\limsup_k f^k$ whose epigraph is
\begin{eqnarray*}
	\mathrm{epi}(e\mbox{-}\!\limsup\nolimits_k
	f^k)&=&\liminf\nolimits_k \mathrm{epi}\,f^k.
\end{eqnarray*}
\end{itemize}
\end{definition}

\begin{remark}
We point out that $\mathrm{hyp}\,f$ $($resp. $\mathrm{epi}\,f$$)$ is nonempty iff $\mathrm{dom}_Uf$ $($resp. $\mathrm{dom}_Lf$$)$ is nonempty. In addition, if $f$ is upper $($resp. lower$)$ proper, then $(x,f(x))\in\mathrm{hyp}\,f$ $($resp.  $(x,f(x))\in\mathrm{epi}\,f$$)$ for all
$x\in\mathrm{dom}_Uf$ $($resp. $x\in\mathrm{dom}_Lf$$)$.
\end{remark}

We recall some properties of hypo- and epi-limits (see~\cite{RW09}). Clearly,
$$h\mbox{-}\!\liminf\nolimits_k f^k\le h\mbox{-}\!\limsup\nolimits_k f^k\;\;\mbox{ and }\;\; e\mbox{-}\!\liminf\nolimits_k f^k\le e\mbox{-}\!\limsup\nolimits_k f^k.$$
Hypo- and epi-limits are related as follows:
$$e\mbox{-}\!\limsup\nolimits_k f^k=-h\mbox{-}\!\liminf\nolimits_k (-f^k)\;\;\mbox{ and }\;\;
e\mbox{-}\!\liminf\nolimits_k f^k=-h\mbox{-}\!\limsup\nolimits_k (-f^k).$$
From this, we infer that $f^k\stackrel{e}{\to}f$ iff $-f^k\stackrel{h}{\to}-f$.

It is clear that
\begin{eqnarray*}
	f\le h\mbox{-}\!\liminf\nolimits_k f^k& \;\;\Longleftrightarrow\;\; &\mathrm{hyp}\,f \subset\liminf\nolimits_k \mathrm{hyp}\,f^k,\\
	h\mbox{-}\!\limsup\nolimits_k f^k\le f& \;\;\Longleftrightarrow\;\; &\limsup\nolimits_k \mathrm{hyp}\,f^k\subset\mathrm{hyp}\,f,\\
   	f\le e\mbox{-}\!\liminf\nolimits_k f^k& \;\;\Longleftrightarrow\;\; &\limsup\nolimits_k \mathrm{epi}\,f^k\subset\mathrm{epi}\,f,\\
    e\mbox{-}\!\limsup\nolimits_k f^k\le f& \;\;\Longleftrightarrow\;\; &\mathrm{epi}\,f\subset\liminf\nolimits_k \mathrm{epi}\,f^k.
\end{eqnarray*}
The following pointwise formulas hold:
$$
\begin{array}{lll}
	(h\mbox{-}\!\liminf\nolimits_k f^k)(x) &=& \max\{\alpha\in\overline{\R}\colon \exists x^k\to x\mbox{ with }\liminf\nolimits_k f^k(x^k)=\alpha\}, \\
	(h\mbox{-}\!\limsup\nolimits_k f^k)(x) &=& \max\{\alpha\in\overline{\R}\colon \exists x^k\to x\mbox{ with }\limsup\nolimits_kf^k(x^k)=\alpha\},\\
	(e\mbox{-}\!\liminf\nolimits_k f^k)(x) &=& \min\{\alpha\in\overline{\R}\colon \exists x^k\to x\mbox{ with }\liminf\nolimits_k f^k(x^k)=\alpha\}, \\
	(e\mbox{-}\!\limsup\nolimits_k f^k)(x) &=& \min\{\alpha\in\overline{\R}\colon \exists x^k\to x\mbox{ with }\limsup\nolimits_kf^k(x^k)=\alpha\}.
\end{array}
$$
The following equivalences hold:
\begin{eqnarray}\label{eq:h-liminf}
	f\le h\mbox{-}\!\liminf\nolimits_k f^k &\;\;\Longleftrightarrow\;\;& \forall x\in\R^{\ell},\exists x^k\to x\colon f(x)\le \liminf\nolimits_k f^k(x^k), \\ \label{eq:h-limsup}
	h\mbox{-}\!\limsup\nolimits_k f^k\le f &\;\;\Longleftrightarrow\;\;&\forall x\in\R^{\ell},\forall x^k\to x\colon \limsup\nolimits_k f^k(x^k) \le f(x),\\ \label{eq:e-liminf}
	f\le e\mbox{-}\!\liminf\nolimits_k f^k &\;\;\Longleftrightarrow\;\;& \forall x\in\R^{\ell},\forall x^k\to x\colon f(x)\le \liminf\nolimits_k f^k(x^k), \\ \label{eq:e-limsup}
	e\mbox{-}\!\limsup\nolimits_k f^k\le f &\;\;\Longleftrightarrow\;\;&\forall x\in\R^{\ell},\exists x^k\to x\colon \limsup\nolimits_k f^k(x^k) \le f(x).
\end{eqnarray}
From this, we infer that
\begin{eqnarray*}
	f\le e\mbox{-}\!\liminf\nolimits_k f^k &\;\;\Longrightarrow\;\;& f\le h\mbox{-}\!\liminf\nolimits_k f^k,\\
	h\mbox{-}\!\limsup\nolimits_k f^k\le f  &\;\;\Longrightarrow\;\;&
	e\mbox{-}\!\limsup\nolimits_k f^k \le f.
\end{eqnarray*}
We define three variational convergence notions by using hypo- and epi-limits.

\begin{definition}[{\cite{RW09}}]\label{def:h-e-c-convergence}  A sequence $f^k\colon \R^\ell\to\overline{\R}$ is said to
\begin{itemize}
	\item hypo-converge to~$f$, denoted by $f^{k}\overset{h}{\rightarrow}f$, if
	$h\mbox{-}\!\liminf\nolimits_k f^k=f= h\mbox{-}\!\limsup\nolimits_k f^k$.
	\item epi-converge to $f$, denoted by $f^{k}\overset{e}{\rightarrow}f$, if
	$e\mbox{-}\!\liminf\nolimits_k f^k=f=e\mbox{-}\!\limsup\nolimits_k f^k$.
	\item converge continuously to $f$, denoted by
	$f^{k}\overset{c}{\rightarrow}f$, if $f^k(x^k)\to f(x)$ for every $x^k\to x$.
\end{itemize}
\end{definition}

Clearly
$$f^k\stackrel{h}{\to}f \;\Longleftrightarrow \;\mathrm{hyp}\,f^k\to \mathrm{hyp}\,f \;\Longleftrightarrow\;
\begin{cases}
	f\le h\mbox{-}\!\liminf\nolimits_k f^k,\\
	h\mbox{-}\!\limsup\nolimits_k f^k\le f,
\end{cases}
$$
$$f^k\stackrel{e}{\to}f \;\Longleftrightarrow \;\mathrm{epi}\,f^k\to \mathrm{epi}\,f \;\Longleftrightarrow\;
\begin{cases}
	f\le e\mbox{-}\!\liminf\nolimits_k f^k,\\
	e\mbox{-}\!\limsup\nolimits_k f^k\le f.
\end{cases}$$

\begin{remark}\label{rem:h-conv}
	\begin{enumerate}
		\item If $f^k\stackrel{h}{\to}f$ $($resp. $f^k\stackrel{e}{\to}f$, $f^k\stackrel{c}{\to}f$$)$, then $f$ is usc $($resp. lsc, continuous$)$ (see~\cite{RW09}).
		\item If either $f^k\stackrel{h}{\to}f$ or $f^k\stackrel{e}{\to}f$, then for every $x\in\R^\ell$ there exists $x^k\to x$ such that $f^k(x^k)\to f(x)$.
		\item By~\eqref{eq:h-limsup}--\eqref{eq:e-liminf} and above, we have
        $$f^{k}\overset{c}{\rightarrow}f\;\Longleftrightarrow\; h\mbox{-}\!\limsup\nolimits_k f^k\le f\le e\mbox{-}\!\liminf\nolimits_k f^k \;\Longleftrightarrow\; f^{k}\overset{h}{\rightarrow}f\mbox{ and }f^{k}\overset{e}{\rightarrow}f.$$
	\end{enumerate}
\end{remark}

\begin{proposition}\label{prop:c-convergence-subseq}
$f^{k}\overset{c}{\rightarrow}f$ iff $f^{k_j}(x^{k_j})\to f(x)$ for any subsequence $x^{k_j}\to x$.
\end{proposition}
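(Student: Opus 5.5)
We have to show both implications; the meaning of the subsequence condition needs to be pinned down first. The phrase ``$f^{k_j}(x^{k_j})\to f(x)$ for any subsequence $x^{k_j}\to x$'' is read as follows: for every strictly increasing sequence of indices $\{k_j\}$ and every sequence $x^{k_j}\to x$ (defined only along those indices), we have $f^{k_j}(x^{k_j})\to f(x)$ in $\overline{\R}$.

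\emph{Sufficiency.} Take the trivial subsequence $k_j=j$. Then the condition reduces verbatim to the definition of $f^k\overset{c}{\to}f$ in Definition~\ref{def:h-e-c-convergence}.

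\emph{Necessity.} I would argue by contradiction. Suppose $f^k\overset{c}{\to}f$, but there are indices $k_1<k_2<\cdots$ and points $x^{k_j}\to x$ such that $f^{k_j}(x^{k_j})\not\to f(x)$. Passing to a further subsequence, we may assume $f^{k_j}(x^{k_j})$ stays outside some fixed neighborhood $W$ of $f(x)$ in $\overline{\R}$.

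The key step is to fill in the gaps and build a full sequence $\{z^k\}$ with $z^k\to x$. Set $z^k:=x^{k_j}$ when $k=k_j$, and $z^k:=x$ when $k\notin\{k_j\}$. This sequence converges to $x$: given $\varepsilon>0$, the points $x^{k_j}$ lie within $\varepsilon$ of $x$ for $j\ge J$, and the filler points equal $x$. Hence for all $k\ge k_J$ we have $|z^k-x|<\varepsilon$.

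By continuous convergence, $f^k(z^k)\to f(x)$. Then its subsequence $f^{k_j}(z^{k_j})=f^{k_j}(x^{k_j})$ also converges to $f(x)$, which contradicts the choice of $W$. This contradiction proves necessity.

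There is no real obstacle here. The only care needed is to handle convergence in $\overline{\R}$ through neighborhoods rather than absolute values, which covers the cases $f(x)=\pm\infty$, and to note that the filler value $x$ is legitimate because $f^k$ is defined on all of $\R^\ell$.
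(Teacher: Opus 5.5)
Your proof is correct and follows the same route as the paper. You embed the subsequence into a full sequence converging to $x$ (you fill the gaps with $x$ itself, whereas the paper repeats the next subsequence term), apply continuous convergence, and read off $f^{k_j}(x^{k_j})=f^{k_j}(z^{k_j})\to f(x)$. This direct argument already finishes the proof, so the contradiction framing and the neighborhood $W$ are unnecessary.
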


\begin{proof}
Implication $(\Leftarrow)$ is trivial. We check implication $(\Rightarrow)$. For a subsequence $x^{k_j}\to x$, we define a sequence $\{\tilde
x^k\}$ by $\tilde x^1=x^{k_1}$, $\tilde x^2=x^{k_1}$, $\ldots$, $\tilde x^{k_1-1}=x^{k_1}$, $\tilde x^{k_1}=x^{k_1}$, $\tilde x^{k_1+1}=x^{k_2}$, $\ldots$, $\tilde
x^{k_2-1}=x^{k_2}$, $\tilde
x^{k_2}=x^{k_2}$, $\tilde x^{k_2+1}=x^{k_3}$, $\ldots$. Clearly, $\tilde x^k\to x$ and as $f^{k}(\tilde x^{k})\to f(x)$, we have $f^{k_j}(x^{k_j})\to f(x)$.
\end{proof}

Similarly, we can write equivalences
\eqref{eq:h-limsup}--\eqref{eq:e-liminf} in terms of subsequences as follows:
\begin{eqnarray*}
h\mbox{-}\!\limsup\nolimits_k f^k \! \le\! f(x) \; \Longleftrightarrow\; \limsup\nolimits_j f^{k_j}(x^{k_j})\!\le\! f(x),\forall x^{k_j}\to x,\\
f\le e\mbox{-}\!\liminf\nolimits_k f^k \;  \Longleftrightarrow\;
f(x)\le \liminf\nolimits_j f^{k_j}(x^{k_j}),\forall x^{k_j}\to x.
\end{eqnarray*}

We recall the outer and inner graphical limits of a sequence of multifunctions. We then define outer and inner continuous limits, which split continuous convergence into two parts.

\begin{definition}[{\cite{RW09}}]\label{def:g-convergence}
	For a sequence $\Psi^k\colon \R^\ell\tos \R^n$, its
	\begin{itemize}
		\item graphical outer limit is the mapping $g\mbox{-}\!\limsup_k\Psi^k$ whose graph is
		\begin{eqnarray*}
			\mathrm{gph}(g\mbox{-}\!\limsup\nolimits_k\Psi^k)&=&\limsup\nolimits_k \mathrm{gph}\,\Psi^k.
		\end{eqnarray*}
		\item graphical inner limit is the mapping  $g\mbox{-}\!\liminf_k\Psi^k$ whose graph is
		\begin{eqnarray*}
			\mathrm{gph}(g\mbox{-}\!\liminf\nolimits_k\Psi^k)&=&\liminf\nolimits_k\mathrm{gph}\,\Psi^k.
		\end{eqnarray*}
		\item continuous outer limit is the mapping
		$c\mbox{-}\!\limsup_k\Psi^k$ defined by
		\begin{eqnarray*}
			(c\mbox{-}\!\limsup\nolimits_k\Psi^k)(y)&:=&\bigcup\nolimits_{\{y^k\to y\}}\limsup\nolimits_k\Psi^k(y^k).
		\end{eqnarray*}
		\item continuous inner limit is the mapping
		$c\mbox{-}\!\liminf_k\Psi^k$ defined by
		\begin{eqnarray*}
			(c\mbox{-}\!\liminf\nolimits_k\Psi^k)(y)&:=&\bigcap\nolimits_{\{y^k\to y\}}\liminf\nolimits_k\Psi^k(y^k).
		\end{eqnarray*}
	\end{itemize}
\end{definition}

\begin{remark}
We point out that $\mathrm{gph}\,\Psi$ is nonempty iff $\Psi$ is proper.
\end{remark}

We recall some properties of graphical outer and inner limits from~\cite{LS21,RW09}.
$$g\mbox{-}\!\liminf\nolimits_k\Psi^k\subset g\mbox{-}\!\limsup\nolimits_k \Psi^k\;\;\mbox{ and }\;\; c\mbox{-}\!\liminf\nolimits_k\Psi^k\subset c\mbox{-}\!\limsup\nolimits_k \Psi^k.$$
By definition, we have
\begin{eqnarray*}
	\Psi \subset g\mbox{-}\!\liminf\nolimits_k\Psi^k&\;\;\Longleftrightarrow\;\;&\mathrm{gph}\,\Psi \subset\liminf\nolimits_k (\mathrm{gph}\,\Psi^k),\\ g\mbox{-}\!\limsup\nolimits_k\Psi^k\subset\Psi&\;\;\Longleftrightarrow\;\;&\limsup\nolimits_k (\mathrm{gph}\,\Psi^k)\subset\mathrm{gph}\,\Psi.
\end{eqnarray*}
The following pointwise formulas hold:
\begin{eqnarray*}
	(g\mbox{-}\!\liminf\nolimits_k\Psi^k)(y) &=& \bigcup\nolimits_{\{y^k\to y\}}\liminf\nolimits_k\Psi^k(y^k), \\
	(g\mbox{-}\!\limsup\nolimits_k\Psi^k)(y) &=& \bigcup\nolimits_{\{y^k\to y\}}\limsup\nolimits_k\Psi^k(y^k).
\end{eqnarray*}
From these formulas, we obtain
$$c\mbox{-}\!\liminf\nolimits_k\Psi^k\subset g\mbox{-}\!\liminf\nolimits_k\Psi^k\;\; \mbox{ and }\;\; c\mbox{-}\!\limsup\nolimits_k\Psi^k=g\mbox{-}\!\limsup\nolimits_k\Psi^k.$$
By this equality, from now on, we will use only the outer limit notation $g\mbox{-}\!\limsup\nolimits_k\Psi^k$ and not the outer continuous limit notation.

The following equivalences hold:
\begin{eqnarray}\label{eq:g-liminf}
	\Psi \subset g\mbox{-}\!\liminf\nolimits_k\Psi^k &\;\Longleftrightarrow\;& \forall y\in\R^\ell\colon \Psi(y)\subset  \bigcup\nolimits_{\{y^k\to y\}}\liminf\nolimits_k \Psi^k(y^k), \\  \label{eq:g-limsup}
	g\mbox{-}\!\limsup\nolimits_k\Psi^k\subset \Psi &\;\Longleftrightarrow\;& \forall y\in\R^\ell,\forall y^k\to y\colon \limsup\nolimits_k \Psi^k(y^k) \subset \Psi(y),\\ \label{eq:c-liminf-phi}
	\Psi \subset c\mbox{-}\!\liminf\nolimits_k\Psi^k &\;\;\Longleftrightarrow\;\;& \forall y\in\R^\ell,\forall y^k\to y\colon \Psi(y)\subset \liminf\nolimits_k \Psi^k(y^k).
\end{eqnarray}
We recall continuous and graphical convergence notions for multifunctions.

\begin{definition}[{\cite{RW09}}] A sequence $\Psi^k\colon \R^\ell\tos \R^n$ is said to
\begin{itemize}
\item graph converge to~$\Psi$, denoted by $\Psi^{k}\overset{g}{\rightarrow}\Psi$, if
$g\mbox{-}\!\liminf\nolimits_k\Psi^k=\Psi= g\mbox{-}\!\limsup\nolimits_k \Psi^k$.
\item converge continuously to~$\Psi$, denoted by
$\Psi^k\stackrel{c}{\to}\Psi$, if $\Psi^k(y^k)\to \Psi(y)$ for every $y^k\to y$.
\end{itemize}
\end{definition}

\begin{remark}\label{rem:continuous2}
\begin{enumerate}
\item Lignola and Morgan~\cite{LM92,LM97} defined upper and lower convergence of $\{\Psi^k\}$ to $\Psi$ by using the right-sides of~\eqref{eq:g-limsup} and~\eqref{eq:c-liminf-phi}, respectively.
Whereas $G^-$ and $G^+$ convergence in~\cite{LM97} were defined by the right-sides of~\eqref{eq:g-liminf} and~\eqref{eq:g-limsup}, respectively. In this paper, we adhere to the recent terminology on variational convergence for multifunctions in~\cite{RW09}.
\item If $\Psi^k\stackrel{c}{\to}\Psi$ $($resp. $\Psi^k\stackrel{g}{\to}\Psi$$)$, then $\Psi$ is continuous  $($resp. osc$)$ and thus closed-valued $($see~\cite{RW09}$)$.
\item Clearly $\Psi^k\stackrel{c}{\to}\Psi$ iff $c\mbox{-}\!\liminf\nolimits_k\Psi^k=\Psi= g\mbox{-}\!\limsup\nolimits_k \Psi^k$.
\item By the pointwise formulas, we infer that $\Psi^k\stackrel{c}{\to}\Psi$ implies $\Psi^k\stackrel{g}{\to}\Psi$.
\end{enumerate}
\end{remark}

Clearly
$$\Psi^{k}\overset{g}{\rightarrow}\Psi\;\;\Longleftrightarrow\;\;\mathrm{gph}\, \Psi^{k}\rightarrow \mathrm{gph}\,\Psi\;\;\Longleftrightarrow\;\;
\begin{cases}\Psi \subset g\mbox{-}\!\liminf\nolimits_k\Psi^k,\\ g\mbox{-}\!\limsup\nolimits_k\Psi^k\subset\Psi.
\end{cases}$$
Moreover
$$\Psi^k\stackrel{c}{\to}\Psi \;\;\Longleftrightarrow\;\;
\begin{cases}
	\Psi \subset c\mbox{-}\!\liminf\nolimits_k\Psi^k\\ g\mbox{-}\!\limsup\nolimits_k\Psi^k\subset\Psi.
\end{cases}$$
Let us consider the following mappings (see~\cite{LM92}):
\begin{itemize}
	\item Interval mappings: $\Psi^k_1(y):=[f^k(y),g^k(y)]$ and $\Psi_1(y):=[f(y),g(y)]$ with $f^k,g^k,f,g\colon\R^n\to\overline{\R}$ such that $f^k\le g^k$ for all $k$ and $f\le g$. Let $\overset{\circ}{\Psi}\,^{k}_1$ and $\overset{\circ}{\Psi}_1$ be the same mappings but with open intervals instead.
	\item Unexplicit mappings: $\Psi^k_2(y)\equiv C_k$ and $\Psi_2(y)\equiv C$ with $C_k,C\subset\R^m$ for all $k$.
	\item Inequality mappings:
$\Psi^k_3(y):=\{x\in\R^n\colon g_i^k(x,y)\le 0, i=1,\ldots ,p\}$ and
$\Psi_3(y):=\{x\in\R^n\colon g_i(x,y)\le 0, i=1,\ldots ,p\}$,
with $g_i,g_i^k\colon \R^{n+m}\to\overline{\R}$ for all $k$ and $i=1,\ldots ,p$. Let $\overset{\circ}{\Psi}\,^{k}_3$ and $\overset{\circ}{\Psi}_3$ be the same mappings but with strict inequalities instead.
\end{itemize}
We study the convergence of these mappings. Part~$(b)$ and the first implications of~$(a)$ and~$(c)$ appear in~\cite[Example~2.2.2, Remark~2.2.2]{LM92}. In~\cite[Propositions~3.3.1 and 3.3.3]{LM92} there are convergence results for inequality mappings but under other convergence notions.

\begin{proposition}\label{prop:ex-feas-mappings}
\begin{description}
\item[$(a)$] For interval mappings one has
\begin{eqnarray*}
			f\le e\mbox{-}\!\liminf\nolimits_k f^k\;\,\mbox{ and }\,\; h\mbox{-}\!\limsup\nolimits_k g^k\le g & \;\;\Longrightarrow\;\;& g\mbox{-}\!\limsup\nolimits_k\Psi^k_1\subset\Psi_1,\\
			h\mbox{-}\!\limsup\nolimits_k f^k\le f\;\,\mbox{ and }\,\; g \le e\mbox{-}\!\liminf\nolimits_k g^k &\;\;\Longrightarrow\;\; & \overset{\circ}{\Psi}_1 \subset c\mbox{-}\!\liminf\nolimits_k\overset{\circ}{\Psi}\,^{k}_1.
\end{eqnarray*}

\item[$(b)$] For unexplicit mappings one has
\begin{eqnarray*}
	C \!\subset\!\liminf\nolimits_k C_k 	&\;\; \Longleftrightarrow \;\; & \Psi_2 \subset c\mbox{-}\!\liminf\nolimits_k\Psi^k_2,\\
	\limsup\nolimits_k C_k \subset C &\;\; \Longleftrightarrow \;\; & 	g\mbox{-}\!\limsup\nolimits_k\Psi^k_2\subset\Psi_2.
\end{eqnarray*}

\item[$(c)$] For inequality mappings one has
\begin{eqnarray*}
			g_i\le e\mbox{-}\!\liminf\nolimits_k g^k_i,\, i=1,\ldots, p&\;\; \Longrightarrow \;\;&
			g\mbox{-}\!\limsup\nolimits_k\Psi^k_3 \subset\Psi_3,\\
			h\mbox{-}\!\limsup\nolimits_k g^k_i\le g_i,\, i=1,\ldots, p&\;\; \Longrightarrow \;\;&
			\overset{\circ}{\Psi}_3 \subset c\mbox{-}\!\liminf\nolimits_k \overset{\circ}{\Psi}\,^{k}_3.
\end{eqnarray*}
	\end{description}
\end{proposition}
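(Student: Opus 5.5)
The plan is to verify each implication directly from the sequential characterizations \eqref{eq:h-limsup}, \eqref{eq:e-liminf}, \eqref{eq:g-limsup} and \eqref{eq:c-liminf-phi}, together with their subsequence versions recorded after Proposition~\ref{prop:c-convergence-subseq}. In every case we fix $y^k\to y$, take a point in the relevant outer or inner limit of the perturbed sets, and pass the defining inequalities to the limit.

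For $(a)$, first implication: let $x\in\limsup_k\Psi_1^k(y^k)$, so that $x^{k_j}\to x$ with $f^{k_j}(y^{k_j})\le x^{k_j}\le g^{k_j}(y^{k_j})$. The subsequence form of \eqref{eq:e-liminf} gives $f(y)\le\liminf_j f^{k_j}(y^{k_j})\le x$, and the subsequence form of \eqref{eq:h-limsup} gives $x\le\limsup_j g^{k_j}(y^{k_j})\le g(y)$. Hence $x\in\Psi_1(y)$, and \eqref{eq:g-limsup} yields the inclusion. For the second implication, let $f(y)<x<g(y)$. By \eqref{eq:h-limsup} we have $\limsup_k f^k(y^k)\le f(y)<x$, and by \eqref{eq:e-liminf} we have $\liminf_k g^k(y^k)\ge g(y)>x$. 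Thus $x\in\overset{\circ}{\Psi}\,^{k}_1(y^k)$ for all large $k$. Taking the constant sequence $x^k=x$ eventually, and arbitrary points before that, gives $x\in\liminf_k\overset{\circ}{\Psi}\,^{k}_1(y^k)$; then \eqref{eq:c-liminf-phi} applies. Here the strict inequalities are what make a constant sequence work.

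For $(b)$, $\Psi_2^k(y^k)=C_k$ regardless of $y^k$. Both equivalences therefore reduce, via \eqref{eq:c-liminf-phi} and \eqref{eq:g-limsup}, to $C\subset\liminf_k C_k$ and $\limsup_k C_k\subset C$ respectively. The "only if" directions follow by choosing any $y^k\to y$, for instance constant, since $\mathbb{R}^m$ is nonempty.

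For $(c)$, the first implication proceeds as in $(a)$. If $x^{k_j}\to x$ with $g_i^{k_j}(x^{k_j},y^{k_j})\le0$, then, since $(x^{k_j},y^{k_j})\to(x,y)$, the subsequence form of \eqref{eq:e-liminf} for each $i$ gives $g_i(x,y)\le\liminf_j g_i^{k_j}(x^{k_j},y^{k_j})\le0$. For the second implication, take $x$ with $g_i(x,y)<0$ for all $i$. Applying \eqref{eq:h-limsup} to $(x,y^k)\to(x,y)$ gives $\limsup_k g_i^k(x,y^k)\le g_i(x,y)<0$. Since there are finitely many $i$, there is a common $N$ with $x\in\overset{\circ}{\Psi}\,^{k}_3(y^k)$ for $k\ge N$, and we conclude as in $(a)$.

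The main subtlety is the extended-valued arithmetic: we must ensure the chained inequalities remain valid when $f,g$ take the values $\pm\infty$. We must also justify the passage from "eventually in the sets" to membership in the inner limit when early sets $\Psi^k(y^k)$ may be empty. That case is harmless, because the definition of $\liminf$ only requires $x^k\in C_k$ for large $k$, which is the standard convention of~\cite{RW09}.
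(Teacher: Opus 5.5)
Your proof is correct and follows the paper's own argument. For the second implication of $(a)$, the paper likewise uses \eqref{eq:h-limsup}--\eqref{eq:e-liminf} to get $f^k(y^k)<x<g^k(y^k)$ for all large $k$, and from this it concludes $x\in\liminf_k\overset{\circ}{\Psi}\,^{k}_1(y^k)$. The paper cites \cite{LM92} for the first implication of $(a)$ and for $(b)$, and says $(c)$ follows similarly; your direct subsequence verifications of those items are sound. The extended-valued concern you flag does not arise, because every step uses only order comparisons in $\overline{\R}$ and never arithmetic.
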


\begin{proof} $(a)$ The first implication appears in~\cite[Example~2.2.2]{LM92}.
We check the second one. If $x\in\overset{\circ}{\Psi}_1(y)$, then $f(y)< x< g(y)$. Let $y^k\to y$ be fixed. By~\eqref{eq:h-limsup}--\eqref{eq:e-liminf} there exists $N$ such that $f^k(y^k)< x< g^k(y^k)$ for all $k\ge N$; i.e.  $x\in\liminf_k\overset{\circ}{\Psi}\,^{k}_1(y^k)$. Hence $\overset{\circ}{\Psi}_1(y)\subset \liminf_k\overset{\circ}{\Psi}\,^{k}_1(y^k)$ and the result follows from Definition~\ref{def:g-convergence}.

Part~$(b)$ appears in~\cite[Remark~2.2.2]{LM92} and~$(c)$ follows similarly as in~$(a)$.
\end{proof}

\begin{remark} Taking stationary sequences, we infer the following continuity properties for these mappings: $(a)$
If $f$ is lsc and $g$ is usc, then $\Psi_1$ is osc; $(b)$ If
$C$ is closed, then $\Psi_2$ is osc; $(c)$ If $g_i$ is lsc for all $i=1,2,\ldots ,p$, then $\Psi_3$ is osc.
\end{remark}

Since feasible mappings of parametric problems can be compositions or Cartesian products of multifunctions, particularly the aforementioned ones, we establish the continuity properties of these operations. To do this, we use continuous and graphical convergence notions. Part~$(c)$ of Proposition~\ref{l1} appears in~\cite[Proposition~4.11$(a)$]{LS21} for $M=2$.

\begin{proposition}\label{l1}
Let $\Psi^k_1,\Psi_1\colon\R^\ell\tos \R^n$ and $\Psi^k_2,\Psi_2\colon\R^n\tos \R^m$ be given.
\begin{description}
    \item[$(a)$] If $g\mbox{-}\!\limsup\nolimits_k\Psi^k_1\subset\Psi_1$ with~$\{\Psi^k_1\}$ elb and $g\mbox{-}\!\limsup\nolimits_k\Psi^k_2\subset\Psi_2$, then
$$g\mbox{-}\!\limsup\nolimits_k(\Psi^k_2\circ\Psi^k_1)\subset(\Psi_2\circ\Psi_1).$$
    \item[$(b)$] If $\Psi_1\subset g\mbox{-}\!\liminf\nolimits_k\Psi^k_1$ and $\Psi_2 \subset c\mbox{-}\!\liminf\nolimits_k\Psi^k_2$, then
$$(\Psi_2\circ\Psi_1)\subset c\mbox{-}\!\liminf\nolimits_k(\Psi^k_2\circ\Psi^k_1).$$
    \item[$(c)$] If $\Psi^k_1\stackrel{g}{\to}\Psi_1$ with~$\{\Psi^k_1\}$ elb and~$\Psi^k_2\stackrel{c}{\to}\Psi_2$, then
$\Psi^k_2\circ\Psi^k_1 \stackrel{c}{\to}\Psi_2\circ\Psi_1$.
\end{description}
\end{proposition}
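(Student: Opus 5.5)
The plan is to work directly with the pointwise characterizations \eqref{eq:g-liminf}--\eqref{eq:c-liminf-phi} and the subsequence description of outer limits. I read ``elb'' (eventually locally bounded) as: for every $y$ and every $y^k\to y$, the sequence $\{\Psi^k_1(y^k)\}$ is eventually bounded. Part~$(c)$ will then be assembled from $(a)$ and $(b)$, using the characterization of continuous convergence as $\Psi\subset c\mbox{-}\!\liminf_k\Psi^k$ together with $g\mbox{-}\!\limsup_k\Psi^k\subset\Psi$. Recall also that $\Psi^k_2\stackrel{c}{\to}\Psi_2$ gives both $\Psi_2\subset c\mbox{-}\!\liminf_k\Psi^k_2$ and $g\mbox{-}\!\limsup_k\Psi^k_2\subset\Psi_2$.

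For $(a)$, I would fix $y^k\to y$ and $z\in\limsup_k(\Psi^k_2\circ\Psi^k_1)(y^k)$, and then verify $z\in(\Psi_2\circ\Psi_1)(y)$ via \eqref{eq:g-limsup}.
\begin{itemize}
\item By definition there are a subsequence $k_j$, points $z^{k_j}\to z$, and points $x^{k_j}\in\Psi^k_1(y^{k_j})$ (at index $k_j$) with $z^{k_j}\in\Psi^{k_j}_2(x^{k_j})$.
\item The elb hypothesis, applied along $y^k\to y$, makes $\{x^{k_j}\}$ bounded for large $j$. Passing to a further subsequence, we may assume $x^{k_j}\to x$.
\item Then $(y,x)\in\limsup_k\mathrm{gph}\,\Psi^k_1\subset\mathrm{gph}\,\Psi_1$ and $(x,z)\in\limsup_k\mathrm{gph}\,\Psi^k_2\subset\mathrm{gph}\,\Psi_2$.
\item Hence $z\in\Psi_2(x)$ with $x\in\Psi_1(y)$, as required.
\end{itemize}
The extraction of the convergent subsequence $x^{k_j}\to x$ is the only place where compactness, and hence elb, is needed.

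For $(b)$, I would fix $y^k\to y$, $x\in\Psi_1(y)$ and $z\in\Psi_2(x)$. The aim is to produce $x^k\in\Psi^k_1(y^k)$ with $x^k\to x$. Once this is done, $\Psi_2\subset c\mbox{-}\!\liminf_k\Psi^k_2$ applied along $x^k\to x$ yields $z^k\in\Psi^k_2(x^k)\subset(\Psi^k_2\circ\Psi^k_1)(y^k)$ with $z^k\to z$.

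The main obstacle is this first step. The hypothesis $\Psi_1\subset g\mbox{-}\!\liminf_k\Psi^k_1$ only provides, through \eqref{eq:g-liminf}, some sequence $\tilde y^k\to y$ with $x^k\in\Psi^k_1(\tilde y^k)$, not one attached to the prescribed $y^k$. I would first try to transfer along $\tilde y^k$, but I expect this to fail in general. For instance, take $\Psi^k_1(y)=\{0\}$ if $y=1/k$ and $\Psi^k_1(y)=\emptyset$ otherwise, $\Psi_1(0)=\{0\}$, and $\Psi^k_2=\Psi_2\equiv\{0\}$. Then $g\mbox{-}\!\liminf_k\Psi^k_1\supset\Psi_1$ and $\Psi_2\subset c\mbox{-}\!\liminf_k\Psi^k_2$, yet along $y^k\equiv 0$ the compositions $(\Psi^k_2\circ\Psi^k_1)(0)$ are all empty while $(\Psi_2\circ\Psi_1)(0)=\{0\}$.

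So the argument goes through in one of two ways, and I would adopt whichever matches the intended statement:
\begin{itemize}
\item strengthen the hypothesis on $\Psi_1$ to $\Psi_1\subset c\mbox{-}\!\liminf_k\Psi^k_1$, which gives the conclusion as stated; or
\item keep the hypothesis on $\Psi_1$ and weaken the conclusion to $\Psi_2\circ\Psi_1\subset g\mbox{-}\!\liminf_k(\Psi^k_2\circ\Psi^k_1)$, running the same argument along $\tilde y^k$.
\end{itemize}

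For $(c)$, the outer half, $g\mbox{-}\!\limsup_k(\Psi^k_2\circ\Psi^k_1)\subset\Psi_2\circ\Psi_1$, follows from $(a)$. The inner half follows from $(b)$, subject to the same caveat: the assumption on $\{\Psi^k_1\}$ must supply the inner continuous limit, otherwise only graphical convergence of the composition is obtained.
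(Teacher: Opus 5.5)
Your proof of $(a)$ is the paper's argument, done pointwise through \eqref{eq:g-limsup} instead of on graphs, and it is correct. Your objection to $(b)$ is also correct. It exposes an error in the paper, not a gap in your proposal. The paper's proof of $(b)$ fixes an arbitrary $x^k\to x$ (its base variable is $x$). It then writes ``there exists $(x^k,y^k)\in\mathrm{gph}\,\Psi^k_1\to(x,y)$'', reusing the symbol $x^k$ for the sequence supplied by $\mathrm{gph}\,\Psi_1\subset\liminf_k\mathrm{gph}\,\Psi^k_1$. That sequence need not be the prescribed one, which is exactly the step you flagged.

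Your counterexample checks out, with $\Psi_1(y)=\emptyset$ for $y\ne 0$:
\begin{itemize}
\item $\mathrm{gph}\,\Psi^k_1=\{(1/k,0)\}\to\{(0,0)\}=\mathrm{gph}\,\Psi_1$;
\item $\{\Psi^k_1\}$ is even eub, and $\Psi^k_2\stackrel{c}{\to}\Psi_2$;
\item yet $(\Psi^k_2\circ\Psi^k_1)(0)=\emptyset$ for every $k$, while $(\Psi_2\circ\Psi_1)(0)=\{0\}$.
\end{itemize}
In that example one actually has $\Psi^k_1\stackrel{g}{\to}\Psi_1$, so it refutes $(c)$ as stated as well. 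A blunter check: take $\Psi^k_2=\Psi_2$ to be the identity. Then $(c)$ would say that graphical convergence plus elb implies continuous convergence. This already fails for a constant sequence $\Psi^k_1\equiv\Psi_1$ with $\Psi_1$ osc and locally bounded but not isc.

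Both of your repairs are sound.
\begin{itemize}
\item Under $\Psi_1\subset c\mbox{-}\!\liminf_k\Psi^k_1$, your argument gives $(b)$ verbatim. Hence $(c)$ holds when $\Psi^k_1\stackrel{c}{\to}\Psi_1$ with elb.
\item Keeping the graphical hypothesis, run the argument along the supplied $\tilde y^k$. This gives $\Psi_2\circ\Psi_1\subset g\mbox{-}\!\liminf_k(\Psi^k_2\circ\Psi^k_1)$, and together with $(a)$, $\Psi^k_2\circ\Psi^k_1\stackrel{g}{\to}\Psi_2\circ\Psi_1$.
\end{itemize}
The only place the paper uses $(c)$ is the subsequent remark with $\Psi^k_1\equiv C_k$, and it is unaffected. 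For such $y$-independent mappings, $C\subset\liminf_k C_k$ already yields the continuous inner inclusion by Proposition~\ref{prop:ex-feas-mappings}$(b)$, so your first repair covers it.
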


\begin{proof}
$(a)$ We prove $\limsup_ k \operatorname{gph}(\Psi^k_2\circ \Psi^k _1)\subset \operatorname{gph}(\Psi_2\circ \Psi_1)$. If $(x,z)$ is in the left-side set, then there exist
$z^{k _j}\in\Psi_2^{k_j}(y^{k_j})$, $y^{k_j}\in \Psi^{ k_j}_1(x^{k_j})$ and $\{x^{k_j}\}$ such that $x^{k_j} \to x$ and $z^{k_j} \to z$. By elb, we have $y^{k_j}\to y$ for some $y$, up to subsequences. Therefore $(y^{k_j},z^{k _j})\in\operatorname{gph}\Psi_2^{k_j}\to (y,z)$ and
$(x^{k_j},y^{k _j})\in\operatorname{gph}\Psi_1^{k_j}\to (x,y)$ that by hypothesis imply
$(y,z)\in\operatorname{gph}\Psi_2$ and $(x,y)\in\operatorname{gph}\Psi_1$. Hence $(x,z)\in\operatorname{gph}(\Psi_2\circ \Psi_1)$.

$(b)$ We prove $ (\Psi_2\circ \Psi_1)(x)\subset\liminf_ k (\Psi^k_2\circ \Psi^k _1)(x^k )$ for every $x^k\to x$. If $z\in(\Psi_2\circ \Psi_1)(x)$, then $z\in\Psi_2(y)$ for some $y\in \Psi_1(x)$. As $(x,y)\in \operatorname{gph}\Psi_1$, by hypothesis there exists
$(x^{k},y^{k})\in\operatorname{gph}\Psi_1^{k}\to (x,y)$. Thus, $y^k\in \Psi_1^{k}(x^k)$ for all $k$. On the other hand, by hypothesis for $y^k\to y$, we have $\Psi_2(y)\subset \liminf_k\Psi^k_2(y^k)$. Since $z\in\Psi_2(y)$, there exists $z^k\in \Psi^k_2(y^k)\to z$. Therefore $z^k\in(\Psi^k_2\circ\Psi_1^k)(x^k)\to z$ and we conclude that $z\in\liminf_k(\Psi^k_2\circ\Psi_1^k) (x^k)$. The result follows by~\eqref{eq:c-liminf-phi}.

$(c)$ It follows from $(a)$--$(b)$.
\end{proof}

\begin{remark}
Considering the unexplicit mappings $\Psi_1^k\equiv C_k$ and $\Psi_1\equiv C$ where $C_k,C$ are nonempty sets for all $k$, from part~$(c)$ of the previous result and Proposition~\ref{prop:ex-feas-mappings}$(b)$, we infer the following particular case of~\cite[Theorem~5.53$(c)$]{RW09}: `if $C_k\to C$ with $\{C_k\}$ eventually bounded and $\Psi^k_2\stackrel{c}{\to}\Psi_2$, then $\Psi_2(C_k)\to \Psi_2(C)$'. Notice that $C_k\to C$ with $\{C_k\}$ eventually bounded implies total convergence $C_k\stackrel{t}{\to} C$ by~\cite[Theorem~4.25$(d)$]{RW09} and $C^\infty=\{0\}$ that imply the hypothesis of the aforementioned result.
\end{remark}

\begin{proposition}\label{l2}
Let $\Phi_m^k,\Phi_m:\R^\ell\tos \R^n$ be given for $m\in\{1,\ldots,M\}$ and consider the mappings $\Psi^k (x):=\prod_{m=1}^M \Phi^k_m(x)$ and $\Psi(x):=\prod_{m=1}^M \Phi_m(x)$. Then
\begin{description}
\item[$(a)$] If $g\mbox{-}\!\limsup\nolimits_k\Phi^k_m\subset\Phi_m$ for all $m\in\{1,\ldots,M\}$, then $g\mbox{-}\!\limsup\nolimits_k\Psi^k\subset\Psi$.
\item[$(b)$] If $\Phi_1 \subset g\mbox{-}\!\liminf\nolimits_k\Phi^k_1$ and $\Phi_m\subset c\mbox{-}\!\liminf\nolimits_k\Phi^k_m$ for all $m\in\{2,\ldots ,M\}$, then $\Psi\subset g\mbox{-}\!\liminf\nolimits_k\Psi^k$.
\item[$(c)$] If $\Phi_1^k \stackrel{g}{\to}\Phi_1$ and $\Phi_m^k \stackrel{c}{\to}\Phi_m$ for all $m\in\{2,\ldots ,M\}$, then $\Psi^k \stackrel{c}{\to}\Psi$. In particular, if $\Phi_m^k \stackrel{c}{\to}\Phi_m$ for all $m\in\{1,\ldots,M\}$, then $\Psi^k\stackrel{c}{\to}\Psi$.
\end{description}
\end{proposition}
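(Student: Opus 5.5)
The plan is to check each part directly with the pointwise characterizations \eqref{eq:g-liminf}, \eqref{eq:g-limsup} and \eqref{eq:c-liminf-phi}. The key observation is that an outer (resp. inner) limit of products of sets behaves coordinatewise. Fix $x\in\R^\ell$ and a sequence $x^k\to x$ throughout.

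For $(a)$, by \eqref{eq:g-limsup} it suffices to show $\limsup_k\Psi^k(x^k)\subset\Psi(x)$. Take $z=(z_1,\ldots,z_M)$ in the left-hand side. Then there are a subsequence $\{k_j\}$ and points $z^{k_j}=(z^{k_j}_1,\ldots,z^{k_j}_M)\in\prod_m\Phi^{k_j}_m(x^{k_j})$ with $z^{k_j}\to z$. For each $m$, the pairs $(x^{k_j},z^{k_j}_m)$ lie in $\mathrm{gph}\,\Phi^{k_j}_m$ and converge to $(x,z_m)$, so $(x,z_m)\in\limsup_k\mathrm{gph}\,\Phi^k_m\subset\mathrm{gph}\,\Phi_m$. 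Hence $z\in\Psi(x)$. No boundedness is needed here, because every coordinate already converges along the same subsequence.

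For $(b)$, let $z=(z_1,\ldots,z_M)\in\Psi(x)$. Since $z_1\in\Phi_1(x)$, \eqref{eq:g-liminf} gives a sequence $x^k\to x$ and points $z_1^k\in\Phi_1^k(x^k)$ with $z_1^k\to z_1$. This is the step that fixes the sequence of parameters, and it is the only point where the first factor is treated differently. Along this same sequence $x^k$, the hypothesis $\Phi_m\subset c\mbox{-}\!\liminf_k\Phi^k_m$ for $m\ge2$ and \eqref{eq:c-liminf-phi} give $z_m\in\liminf_k\Phi_m^k(x^k)$. So there are $z^k_m\in\Phi^k_m(x^k)$ with $z^k_m\to z_m$. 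Then $z^k:=(z^k_1,\ldots,z^k_M)\in\Psi^k(x^k)$ and $z^k\to z$, so $z\in\liminf_k\Psi^k(x^k)$ for this particular sequence. By \eqref{eq:g-liminf}, $\Psi\subset g\mbox{-}\!\liminf_k\Psi^k$.

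The main obstacle is that in $(b)$ we must use one common sequence $x^k$ for all factors. This is exactly why only one factor is allowed to converge graphically: the others must accept an arbitrary sequence, which is what the continuous inner limit provides.

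For $(c)$, part $(a)$ gives the outer half $g\mbox{-}\!\limsup_k\Psi^k\subset\Psi$, and part $(b)$ gives the inner half $\Psi\subset g\mbox{-}\!\liminf_k\Psi^k$, which yields $\Psi^k\stackrel{g}{\to}\Psi$. However, the statement asks for continuous convergence $\Psi^k\stackrel{c}{\to}\Psi$, and this requires $\Psi\subset c\mbox{-}\!\liminf_k\Psi^k$ for every sequence $x^k\to x$. If $\Phi_1^k$ only converges graphically, this inclusion is not available in general. For instance, take $M=1$ and $\Phi^k_1$ graphically but not continuously convergent; then $\Psi^k=\Phi^k_1$ does not converge continuously.

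I would therefore establish only graphical convergence $\Psi^k\stackrel{g}{\to}\Psi$ in the first assertion of $(c)$. For the ``in particular'' assertion, where all $\Phi^k_m\stackrel{c}{\to}\Phi_m$, I would repeat the argument of $(b)$ with $\Phi_1$ also handled through \eqref{eq:c-liminf-phi}, now for an arbitrary $x^k\to x$. This gives $\Psi\subset c\mbox{-}\!\liminf_k\Psi^k$, and together with $(a)$ it yields $\Psi^k\stackrel{c}{\to}\Psi$.
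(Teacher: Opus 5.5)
Your arguments for $(a)$ and $(b)$ are the same as the paper's. For the outer limit you pass to the limit coordinatewise along a common subsequence. For the inner limit you let $\mathrm{gph}\,\Phi_1\subset\liminf_k\mathrm{gph}\,\Phi^k_1$ supply the parameter sequence $x^k\to x$, and then apply \eqref{eq:c-liminf-phi} to the factors $m\ge2$ along that same sequence. Your wording of $(b)$ is actually more accurate than the paper's. Its proof announces $\Psi(x)\subset\liminf_k\Psi^k(x^k)$ ``for all $x^k\to x$'', but it only establishes this along the sequence furnished by $\Phi_1$.

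Your objection to $(c)$ is correct, and it locates the flaw in the paper's one-line proof (``it follows from $(a)$--$(b)$''). Parts $(a)$ and $(b)$ yield only $\Psi^k\stackrel{g}{\to}\Psi$, so the first assertion of $(c)$ is false as stated. Your $M=1$ counterexample is valid, but there the hypothesis on $m\ge2$ is vacuous. To avoid relying on that degenerate case, take $\ell=n=1$ and $M=2$, with:
\begin{itemize}
\item $\Phi^k_1\equiv\Phi_1$, where $\Phi_1(0)=[0,1]$ and $\Phi_1(x)=\{0\}$ for $x\ne0$;
\item $\Phi^k_2\equiv\Phi_2\equiv\{0\}$.
\end{itemize}
Since $\mathrm{gph}\,\Phi_1$ is closed, $\Phi^k_1\stackrel{g}{\to}\Phi_1$, and trivially $\Phi^k_2\stackrel{c}{\to}\Phi_2$. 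Yet for $x^k=1/k$ we get $\Psi(0)=[0,1]\times\{0\}\not\subset\{(0,0)\}=\liminf_k\Psi^k(x^k)$, so $\Psi^k\not\stackrel{c}{\to}\Psi$. This is consistent with Remark~\ref{rem:continuous2}$(2)$, since $\Psi$ is not isc at $0$.

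The ``in particular'' assertion is true, and your proof of it is the right one. You rerun $(b)$ with \eqref{eq:c-liminf-phi} also for $m=1$, along an arbitrary $x^k\to x$, and then combine with $(a)$. This is also the case the paper invokes in the remark following the proposition, to recover the product rule for set convergence.
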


\begin{proof} $(a)$ We prove $\limsup_k\operatorname{gph}\Psi^k\subset \operatorname{gph}\Psi$. If $(x,y_1,\ldots ,y_M)$ is in the left-hand side, then there exists
$(x^{k_j},y_1^{k_j},\ldots ,y_M^{k_j})\in\operatorname{gph}\Psi^{k_j}\to (x,y_1,\ldots ,y_M)$. Therefore $(x^{k_j},y^{k_j}_m)\in\operatorname{gph}\Phi^{k_j}_m\to (x,y_m)$ for all $m\in\{1,\ldots, M\}$; thus, $(x,y_m)\in \limsup_k\operatorname{gph}\Phi_m^k$ that by hypothesis implies
$(x,y_m)\in\operatorname{gph}\Phi_m$ for such $m$. Hence $(x,y_1,\ldots ,y_M)\in\operatorname{gph}\Psi$.

$(b)$ We prove $\Psi(x)\subset\liminf_k\Psi^k(x^k)$ for all $x^k\to x$. If $(y_1,\ldots ,y_M)\in \Psi(x)$, then $y_m\in \Phi_m(x)$ for all $m\in\{1,\ldots, M\}$.
By hypothesis, there exists $(x^{k},y^{k}_1)\in\operatorname{gph}\Phi^{k}_1\to (x,y_1)$. Moreover, as $\Phi_m(x)\subset \liminf_k\Phi_m^k(x^k)$ for all $m\in\{2,\ldots, M\}$, there exist $y_m^k\in \Phi_m^k(x^k)\to y_m$ for such $m$. Therefore $(y_1^{k},\ldots ,y_M^{k})\in\Psi^{k}(x^k)\to (y_1,\ldots ,y_M)$ and $(y_1,\ldots ,y_M)$ is in the right-side set.

$(c)$ It follows from~$(a)$--$(b)$.
\end{proof}

\begin{remark}
Considering the unexplicit mappings $\Phi_m^k\equiv C_m^k$ and $\Phi_m\equiv C_m$ where $C_m^k,C_m$ are nonempty sets for every $k$ and $m\in\{1,\ldots,M\}$, from part~$(c)$ of the previous result and Proposition~\ref{prop:ex-feas-mappings}$(b)$, we infer \cite[Exercise~4.29$(a)$]{RW09}: `if $C_m^k\to C_m$ for all $m\in\{1,\ldots,M\}$, then $C_1^k\times\cdots\times C_M^k\to C_1\times\cdots\times C_M$'.
\end{remark}

The literature on the stability of marginal functions and solution mapping is mainly restricted to finite-valued objective functions and nonempty-valued feasible mappings. To extend such results, we define the following nonempty-valuedness and boundedness notions for sequences of multifunctions. The second and the third ones appear in~\cite{LS21,RW09}.

\begin{definition}
A sequence $\Psi^k\colon \R^\ell\tos \R^n$ is said to be:
\begin{itemize}
\item eventually nonempty-valued $($env$)$, if there exists $N\in\mathbb{N}$ such that $\Psi^k$ is nonempty-valued for every $k\ge N$.
\item eventually uniformly bounded $($eub$)$, if $\{\Psi^k(\R^\ell)\}$ is eventually bounded.
\item  eventually locally bounded
$($elb$)$, if $\{\Psi^k(y^k)\}$ is eventually bounded for every $y^k\to y$.
\item strongly eventually locally bounded $($selb$)$, if $\{\Psi^k(y^k)\}$ is strongly eventually bounded for every $y^k\to y$.
\end{itemize}
\end{definition}

A sequence of nonempty-valued multifunctions is env. An eub sequence is elb. An elb and env sequence of multifunctions is selb. A sequence $\{\Psi^k\}$ is elb iff for every~$y$ there exists a neighborhood $U$ of $y$ such that $\{\Psi^k(U)\}$ is eventually bounded.
When $\Psi^k\equiv\Psi$, then eub $($resp. elb$)$ property reduces to boundedness $($resp. local boundedness$)$ of $\Psi$.

\begin{remark}
Lignola and Morgan~\cite{LM92} defined the elb property as follows: `For any convergent sequence~$\{y^k\}$ and any sequence~$\{x^k\}$ with $x^k\in\Psi^k(y^k)$ for all~$k$, the sequence~$\{x^k\}$ has a convergent subsequence'.
\end{remark}

We now prove some properties of these notions.

\begin{proposition}\label{prop:elb-cont} Let $\Psi^k,\Psi\colon \R^\ell\tos \R^n$ be given. Then
\begin{description}
  \item[$(a)$] If $\{\Psi^k\}$ is elb and $\Psi \subset c\mbox{-}\!\liminf\nolimits_k\Psi^k$, then $\Psi$ is locally bounded.
  \item[$(b)$] If $\{\Psi^k\}$ is selb, then $\limsup\nolimits_k \Psi^k(y^k)$ is nonempty and compact for every $y^k\to~y$.
  \item[$(c)$] If $g\mbox{-}\!\limsup\nolimits_k\Psi^k\subset \Psi$ with $\{\Psi^k\}$ elb and $\Psi(y)=\emptyset$ for some $y$, then for every $y^k\to y$ there exists
      $N\in\mathbb{N}$
such that $\Psi^k(y^k)=\emptyset$ for all $k\ge N$.
 \item[$(d)$] If $\{\Psi^k\}$ is eub and $\Psi \subset g\mbox{-}\!\liminf\nolimits_k\Psi^k$, then $\Psi$ is bounded.
\end{description}
\end{proposition}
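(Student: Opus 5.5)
The four parts are independent, and I will prove each straight from the definitions. Throughout I use the pointwise characterizations \eqref{eq:g-liminf}--\eqref{eq:c-liminf-phi} and Proposition~\ref{prop:p2}.

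For $(a)$, I argue by contradiction. Suppose $\Psi$ is not locally bounded at some $y$. Then there are $y^k\to y$ and $x^k\in\Psi(y^k)$ with $|x^k|\to\infty$. By elb, there is a neighborhood $U$ of $y$ and $N$ such that $\bigcup_{k\ge N}\Psi^k(U)\subset\rho\mathbb{B}$ for some $\rho$. Pick $j$ with $y^j\in U$ and $|x^j|>\rho+1$. Take the constant sequence $z^k\equiv y^j$, which converges to $y^j$. By \eqref{eq:c-liminf-phi}, $x^j\in\liminf_k\Psi^k(y^j)$, so there are points of $\Psi^k(y^j)$ converging to $x^j$. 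These points lie in $\rho\mathbb{B}$ for $k\ge N$, which is a contradiction. This part really needs $\Psi\subset c\mbox{-}\!\liminf_k\Psi^k$ only at points of $U$, through constant sequences.

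For $(b)$, fix $y^k\to y$. By selb, $\{\Psi^k(y^k)\}$ is strongly eventually bounded, and Proposition~\ref{prop:p2} gives that $\limsup_k\Psi^k(y^k)$ is nonempty and compact. This part is immediate.

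For $(c)$, I argue by contradiction. Suppose that for some $y^k\to y$ we have $\Psi^{k_j}(y^{k_j})\ne\emptyset$ along a subsequence. Pick $x^{k_j}$ in these sets. By elb, $\{x^{k_j}\}$ is eventually bounded, so a further subsequence converges to some $x$. Then $(y,x)\in\limsup_k\mathrm{gph}\,\Psi^k\subset\mathrm{gph}\,\Psi$, so $x\in\Psi(y)=\emptyset$, which is impossible. Hence $\{\Psi^k(y^k)\}$ is eventually empty-valued; this uses the equivalence recalled in Section~\ref{sec:2}.

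For $(d)$, let $x\in\Psi(y)$. By \eqref{eq:g-liminf} there exist $y^k\to y$ and $x^k\in\Psi^k(y^k)$ with $x^k\to x$. By eub, there is $N$ such that $\bigcup_{k\ge N}\Psi^k(\R^\ell)\subset\rho\mathbb{B}$. Hence $|x|\le\rho$, and $\Psi(\R^\ell)\subset\rho\mathbb{B}$.

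The only delicate step is $(a)$. There one must use the uniform bound on a neighborhood, which is the reformulation of elb stated after the definition, together with constant sequences, rather than the single sequence $y^k$.
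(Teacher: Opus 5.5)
Your proposal is correct and follows the paper's proof essentially part by part. In $(a)$ you use the same ingredients: the neighborhood-uniform bound from elb, together with $\Psi(z)\subset\liminf_k\Psi^k(z)$ at each $z\in U$. The contradiction wrapper is unnecessary, since this directly gives $\Psi(U)\subset\rho\mathbb{B}$. Parts $(b)$ and $(d)$ match the paper. In $(c)$, your subsequence extraction is just an unpacked version of the paper's appeal to $\limsup_k\Psi^k(y^k)=\emptyset$ combined with the escaping-to-the-horizon characterization.
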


\begin{proof} $(a)$ For a fixed~$y$ there exist $r>0$, a neighborhood $U$ of $y$, and $N\in\mathbb{N}$ such that $\Psi^k(z)\subset r\mathbb{B}$ for all $k\ge N$ and $z\in U$.
By~\eqref{eq:c-liminf-phi}, we have $\Psi(z)\subset\liminf_k\Psi^k(z)$; thus, $\Psi(z)\subset  r\mathbb{B}$ for all $z\in U$. Hence $\Psi$ is locally bounded at $y$.

$(b)$ The result follows from Proposition~\ref{prop:p2}.

$(c)$ Let $\Psi(y)=\emptyset$ and $y^k\to y$. By elb there exists $r>0$ and $N_1\in\mathbb{N}$ such that $\bigcup_{k\ge N_1}\Psi^k(y^k)\subset r\mathbb{B}$. By~$\eqref{eq:g-limsup}$,
we have
$\limsup\nolimits_k\Psi^k(y^k)=\emptyset$. By the escaping to the horizon property, for such an $r$, there exists $N_2\in\mathbb{N}$ such that $\Psi^k(y^k)\cap r\mathbb{B}=\emptyset$
for all $k\ge N_2$, a contradiction if $\Psi^k(y^k)\ne\emptyset$ for some $k\ge N:=\max\{N_1,N_2\}$.

$(d)$ By hypothesis there exist $r>0$ and $N\in\mathbb{N}$ such that $\Psi^k(\R^m)\subset r\mathbb{B}$ for all $k\ge N$. Let $y$ be fixed. If $x\in\Psi(y)$, then by~\eqref{eq:g-liminf}
there exists $y^k\to y$ such that $x\in\liminf_k\Psi^k(y^k)$. By the above inclusion, we have $x\in  r\mathbb{B}$. Hence $\Psi(y)\subset  r\mathbb{B}$. The result follows since $y$
was arbitrary.
\end{proof}

We show that the boundedness properties of approximate feasible mappings are inherited by the approximate solution mappings.

\begin{proposition}\label{prop:elb-cont1} Let $\{u^k\}$, $\{\Phi^k\}$ and  $\{S^k\}$ be the approximations in~\eqref{eq:aprox-val-funct-sol-mult}. Then
\begin{description}
\item[$(a)$] If $\{\Phi^k\}$ is elb $($resp. eub$)$, then $\{S^k\}$ is elb $($resp. eub$)$.
\item[$(b)$] If $\{\Phi^k\}$ is selb with $\Phi^k$ closed-valued and $u^k(\cdot,y)$ usc for all $k$
      and $y$, then $\{S^k\}$ is selb.
\end{description}
\end{proposition}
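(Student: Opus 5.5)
Part~$(a)$ follows from the inclusion $S^k(y)\subset\Phi^k(y)$ for every $k$ and $y$, which holds by the definition in~\eqref{eq:aprox-val-funct-sol-mult}. Fix $y^k\to y$. If $\{\Phi^k\}$ is elb, there are $N\in\nn$ and $r>0$ such that $\bigcup_{k\ge N}\Phi^k(y^k)\subset r\mathbb{B}$. Then $\bigcup_{k\ge N}S^k(y^k)\subset r\mathbb{B}$ as well, so $\{S^k(y^k)\}$ is eventually bounded. The eub case is the same argument with $\Phi^k(\R^m)$ in place of $\Phi^k(y^k)$: since $S^k(\R^m)\subset\Phi^k(\R^m)$, eventual boundedness passes from $\{\Phi^k(\R^m)\}$ to $\{S^k(\R^m)\}$.

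For part~$(b)$, part~$(a)$ already shows that $\{S^k\}$ is elb. Since selb means elb together with the condition $\bigcup_{k\ge j}S^k(y^k)\ne\emptyset$ for all $j$, it remains to verify that condition. Fix $y^k\to y$. Because $\{\Phi^k(y^k)\}$ is strongly eventually bounded, there are $N$ and $r>0$ with $\Phi^k(y^k)\subset r\mathbb{B}$ for all $k\ge N$, and for every $j$ there is some $k\ge j$ with $\Phi^k(y^k)\ne\emptyset$. The key step is to show that $S^k(y^k)\ne\emptyset$ for every $k\ge N$ such that $\Phi^k(y^k)\ne\emptyset$. For such $k$, the set $\Phi^k(y^k)$ is closed (since $\Phi^k$ is closed-valued) and lies in $r\mathbb{B}$, so it is nonempty and compact. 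The function $u^k(\cdot,y^k)$ is usc, so by the Weierstrass theorem for usc extended-valued functions it attains its supremum $v^k(y^k)$ over this compact set. The attaining point belongs to $S^k(y^k)$.

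Given $j$, choose $k\ge\max\{j,N\}$ with $\Phi^k(y^k)\ne\emptyset$. By the key step $S^k(y^k)\ne\emptyset$, and hence $\bigcup_{k\ge j}S^k(y^k)\ne\emptyset$. The only point needing care is that attainment must hold in $\overline{\R}$. If $u^k(\cdot,y^k)\equiv-\infty$ on $\Phi^k(y^k)$, every feasible point attains the value $-\infty$. If the supremum is $+\infty$, we take a maximizing sequence, extract a convergent subsequence by compactness, and use upper semicontinuity to get that the limit point $\bar x$ satisfies $u^k(\bar x,y^k)\ge\limsup u^k(x_i,y^k)=+\infty$, so $\bar x$ attains the supremum. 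This is the same compactness argument as in the finite case, so I expect no real obstacle here.
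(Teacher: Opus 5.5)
Your proof is correct and follows the same route as the paper. Part $(a)$ comes from the inclusion $S^k\subset\Phi^k$. Part $(b)$ combines the elb property obtained in $(a)$ with the fact that $S^k(y^k)\ne\emptyset$ whenever $\Phi^k(y^k)$ is nonempty, closed and bounded; the latter holds for $k\ge N$ because a usc function attains its supremum in $\overline{\R}$ on a compact set. You are slightly more careful than the paper, which leaves implicit both the restriction to indices $k\ge N$ needed for compactness and the extended-valued attainment argument.
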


\begin{proof} Part $(a)$ follows from $S^k\subset \Phi^k$ for all~$k$. To check part~$(b)$, we have that $\{S^k\}$ is elb by $(a)$. For any $y^k\to y$
there exists a subsequence $\{\Phi^{k_j}(y^{k_j})\}$ of nonempty sets that by hypothesis implies that $\{S^{k_j}(y^{k_j})\}$ are nonempty. So $\{S^k\}$ is selb.
\end{proof}

\section{Main results}\label{sec:3}
We study the behavior of the value function $v$ and the solution mapping $S$ of problem~$(\mathcal{P}_y)$ when the data $u$ and $\Phi$ are approximated as in~\eqref{eq:aprox-val-funct-sol-mult} by using the convergence notions in~Section~\ref{sec:2}. To do this, we establish the convergence properties of sequences of approximate value functions~$\{v^k\}$ and
solution mappings $\{S^k\}$.

First, we obtain a stability result that is a perturbed counterpart of Proposition~\ref{prop:berge-prop}$(a)$. Part~$(b)$ enhances~\cite[Proposition~4.2]{LM97}.

\begin{proposition}\label{prop:main0} Let $u\le e\mbox{-}\!\liminf\nolimits_k u^k$. Then
\begin{description}
  \item[$(a)$] If $\Phi \subset g\mbox{-}\!\liminf\nolimits_k\Phi^k$ with $\Phi$ compact-valued and $u(\cdot,y)$ is usc for all $y$, then $v\le h\mbox{-}\!\liminf\nolimits_k v^k$.
  \item[$(b)$] If $\Phi \subset c\mbox{-}\!\liminf\nolimits_k\Phi^k$, then $v\le e\mbox{-}\!\liminf\nolimits_k v^k$.
\end{description}
\end{proposition}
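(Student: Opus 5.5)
Both parts rest on the pointwise characterizations \eqref{eq:h-liminf} and \eqref{eq:e-liminf}. For $(a)$ we must exhibit, for each $y$, some $y^k\to y$ with $v(y)\le\liminf_k v^k(y^k)$. For $(b)$ we must show $v(y)\le\liminf_k v^k(y^k)$ for every $y^k\to y$. In both cases, if $\Phi(y)=\emptyset$ then $v(y)=-\infty$ and there is nothing to prove, so we assume $\Phi(y)\ne\emptyset$. The hypothesis $u\le e\mbox{-}\!\liminf_k u^k$ will be used through \eqref{eq:e-liminf} in the space $\R^{n+m}$: whenever $(x^k,y^k)\to(x,y)$, we have $u(x,y)\le\liminf_k u^k(x^k,y^k)$.

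\textbf{Part $(b)$.} Fix $y^k\to y$ and any $x\in\Phi(y)$. By \eqref{eq:c-liminf-phi}, $\Phi(y)\subset\liminf_k\Phi^k(y^k)$, so there exist $x^k\in\Phi^k(y^k)$ with $x^k\to x$. Then
\[
u(x,y)\le\liminf\nolimits_k u^k(x^k,y^k)\le\liminf\nolimits_k v^k(y^k),
\]
since $v^k(y^k)\ge u^k(x^k,y^k)$ by definition of the supremum. Taking the supremum over $x\in\Phi(y)$ gives $v(y)\le\liminf_k v^k(y^k)$, which is the claim by \eqref{eq:e-liminf}. Nothing more is needed here.

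\textbf{Part $(a)$.} The difficulty is that \eqref{eq:g-liminf} supplies, for each $x\in\Phi(y)$, its own sequence $y^k\to y$. We need a single sequence that works for the supremum, so we should pick one good point $x$. Since $\Phi(y)$ is nonempty and compact and $u(\cdot,y)$ is usc, the supremum $v(y)$ is attained at some $\bar x\in\Phi(y)$ (Weierstrass, valid for extended-valued usc functions on compact sets). By \eqref{eq:g-liminf}, there are $y^k\to y$ and $x^k\in\Phi^k(y^k)$ with $x^k\to\bar x$. The same chain as in $(b)$ then gives
\[
v(y)=u(\bar x,y)\le\liminf\nolimits_k u^k(x^k,y^k)\le\liminf\nolimits_k v^k(y^k),
\]
and \eqref{eq:h-liminf} yields $v\le h\mbox{-}\!\liminf_k v^k$.

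The main obstacle is exactly this attainment step; it is why compactness of $\Phi(y)$ and upper semicontinuity of $u(\cdot,y)$ are assumed. Without them, one could instead take maximizing points $x_j$ with $u(x_j,y)\to v(y)$, obtain a sequence $y^{k,j}\to y$ for each $j$, and splice them by a diagonal argument in the style of Proposition~\ref{prop:c-convergence-subseq}. Equivalently, one could use that $\liminf_k\mathrm{hyp}\,v^k$ is closed. The attained-maximum route is the one I would use, since it avoids this bookkeeping entirely.
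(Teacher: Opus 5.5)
Your proof is correct and follows the paper's argument essentially step for step: part $(b)$ via \eqref{eq:c-liminf-phi} and the chain $u(x,y)\le\liminf_k u^k(x^k,y^k)\le\liminf_k v^k(y^k)$, and part $(a)$ by attaining $v(y)$ at some $\bar x\in\Phi(y)$, lifting $(y,\bar x)$ through $\liminf_k\mathrm{gph}\,\Phi^k$, and concluding with \eqref{eq:h-liminf}. Your closing remark is also sound---the maximum in the pointwise formula for $h\mbox{-}\!\liminf_k v^k$ is attained, so taking the supremum over $x\in\Phi(y)$ of the bounds $u(x,y)\le\liminf_k v^k(y^k_x)$ shows that compactness of $\Phi(y)$ and upper semicontinuity of $u(\cdot,y)$ are not actually needed in $(a)$.
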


\begin{proof} $(a)$ If $y\notin\mathrm{dom}\,\Phi$, then $v(y)=-\infty$ and $\liminf_kv^k(y^k)\ge v(y)$ for all $y^k\to y$. On the other hand, if $y\in\mathrm{dom}\,\Phi$, then by
hypothesis there exists $x\in\Phi(y)$ such that $v(y)=u(x,y)$. As $(y,x)\in\mathrm{gph}\,\Phi$,  we have $(y,x)\in\liminf_k \mathrm{gph}\,\Phi^k$ and there exists $(y^k,x^k)\to (y,x)$
such that $x^k\in\Phi^k(y^k)$ for all~$k$. As $v^k(y^k)\ge u^k(x^k,y^k)$ for all~$k$, after taking the liminf,
we obtain $\liminf_k v^k(y^k)\geq u(x,y)$. Hence $\liminf_k v^k(y^k)\geq v(y)$. The result follows by~\eqref{eq:h-liminf}.

$(b)$ Let $y^k\to y$. If $y\notin\mathrm{dom}\,\Phi$, then $v(y)=-\infty$ and $\liminf_kv^k(y^k)\ge v(y)$ holds for all $y^k\to y$. On the other hand, if $y\in\mathrm{dom}\,\Phi$, then
by~\eqref{eq:c-liminf-phi} for any $x\in\Phi(y)$ there exists $x^k\in \Phi^k(y^k)\to x$. As $v^k(y^k)\ge u^k(x^k,y^k)$ for all $k$, after taking the liminf, we obtain
$\liminf\nolimits_k
v^k(y^k)\geq u(x,y)$. From this and since $x\in\Phi(y)$ was arbitrary, we obtain $\liminf\nolimits_k v^k(y^k)\geq v(y)$. The result follows by~\eqref{eq:e-liminf}.
\end{proof}

Under a compactness assumption, it is easy to see that \cite[Proposition 4.2]{LM97} can be deduced from Proposition~\ref{prop:main0}$(a)$. However, the converse does not generally hold, as the following example shows.

\begin{example}
Consider $\Phi^k$, $\Phi$, $u^k$ and $u$ defined as
$\Phi^k(y)=\Phi(y)\equiv[0,1]$, $u^k(x,y)=y/k$, and $u(x,y)=0$ if $y>0$ and
$u(x,y)=-1$ otherwise. Clearly, $u\leq e\mbox{-}\!\liminf\nolimits_k u^k$ with $u(\cdot,y)$ usc and $\Phi\subset g\mbox{-}\!\liminf\nolimits_k \Phi^k$ with $\Phi$ compact-valued. However, it holds that
$0=\limsup_k u(0,1/k)\not<u(0,0)=-1$.
That means that, $v\le h\mbox{-}\!\liminf\nolimits_k v^k$ is guaranteed by Proposition~\ref{prop:main0}$(a)$ and not by \cite[Proposition 4.2]{LM97}.
\end{example}

The next example shows that we cannot relax the inclusions in Proposition~\ref{prop:main0}.

\begin{example}
Consider  $\Phi^k$, $\Phi$, $u^k$ and $u$ defined as
$\Phi^k(y)\equiv [0,1]$, $\Phi(y)\equiv[0,2]$, and $u^k(x,y)=x$.
Clearly, the hypothesis of Proposition~\ref{prop:main0}$(a)$ holds but $\Phi\not \subset g\mbox{-}\!\liminf\nolimits_k\Phi^k$ (so $\Phi\not \subset c\mbox{-}\!\liminf\nolimits_k\Phi^k$). As $v^k=1$ for all $k$ but $v=2$, we have $v\not\le h\mbox{-}\!\liminf\nolimits_k v^k$ and $v\not\le e\mbox{-}\!\liminf\nolimits_k v^k$.
\end{example}

Now, we obtain a stability result that is a perturbed counterpart of Proposition~\ref{prop:berge-prop}$(b)$. This result extends~\cite[Proposition~4.1]{LM97} to problems with objective functions and feasible mappings having non compact domains.

\begin{proposition}\label{prop:main1} If $h\mbox{-}\!\limsup\nolimits_k u^k\le u$ and $g\mbox{-}\!\limsup\nolimits_k\Phi^k\subset \Phi$ with $\{\Phi^k\}$ elb, then $h\mbox{-}\!\limsup\nolimits_k v^k\le v$.
\end{proposition}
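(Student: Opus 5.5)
By \eqref{eq:h-limsup}, it suffices to fix $y\in\R^m$ and an arbitrary sequence $y^k\to y$ and to show $\limsup_k v^k(y^k)\le v(y)$. Set $\alpha:=\limsup_k v^k(y^k)$. If $\alpha=-\infty$ there is nothing to prove, so assume $\alpha>-\infty$. Pass to a subsequence $\{k_j\}$ with $v^{k_j}(y^{k_j})\to\alpha$. Since $\alpha>-\infty$, we have $v^{k_j}(y^{k_j})>-\infty$ for $j$ large. Hence $\Phi^{k_j}(y^{k_j})\neq\emptyset$ for such $j$, because by convention $v^k=-\infty$ off $\mathrm{dom}\,\Phi^k$.

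Next choose near-maximizers. Fix a sequence of levels $\beta_j\in\R$ with $\beta_j<v^{k_j}(y^{k_j})$ and $\beta_j\to\alpha$: for instance $\beta_j=v^{k_j}(y^{k_j})-1/j$ when this is finite, and $\beta_j=j$ when $v^{k_j}(y^{k_j})=+\infty$. By definition of the supremum there is $x^{k_j}\in\Phi^{k_j}(y^{k_j})$ with $u^{k_j}(x^{k_j},y^{k_j})>\beta_j$.

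Now use the elb property of $\{\Phi^k\}$. The sequence $\{\Phi^k(y^k)\}$ is eventually bounded, so $\{x^{k_j}\}$ is bounded. Passing to a further subsequence, $x^{k_j}\to x$ for some $x\in\R^n$. Then $(y^{k_j},x^{k_j})\in\mathrm{gph}\,\Phi^{k_j}$ converges to $(y,x)$, so $(y,x)\in\limsup_k\mathrm{gph}\,\Phi^k\subset\mathrm{gph}\,\Phi$ by the hypothesis $g\mbox{-}\!\limsup_k\Phi^k\subset\Phi$. In particular $x\in\Phi(y)$. Also $(x^{k_j},y^{k_j})\to(x,y)$, so the subsequence form of \eqref{eq:h-limsup} stated after Proposition~\ref{prop:c-convergence-subseq} gives $\limsup_j u^{k_j}(x^{k_j},y^{k_j})\le u(x,y)$. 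Combining,
$$\alpha=\lim_j\beta_j\le\limsup_j u^{k_j}(x^{k_j},y^{k_j})\le u(x,y)\le v(y),$$
where the last inequality uses $x\in\Phi(y)$.

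The step I expect to be the main obstacle is the extraction of a convergent subsequence of near-maximizers. Two things must be handled there. First, the bookkeeping with extended values, including the case $v^{k_j}(y^{k_j})=+\infty$, which is why the levels $\beta_j$ are chosen as above. Second, eventual boundedness only controls the sets for $k\ge N$, so the subsequence must be taken inside that tail. Elb guarantees that the near-maximizers cannot escape to infinity; this is exactly what replaces the compactness assumptions of~\cite[Proposition~4.1]{LM97}. Note that no nonemptiness of $\Phi(y)$ needs to be assumed. If $\Phi(y)=\emptyset$, the argument itself shows $\alpha=-\infty$, consistent with Proposition~\ref{prop:elb-cont}$(c)$.
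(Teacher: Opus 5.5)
Your proof is correct and follows essentially the same route as the paper. Both arguments pick near-maximizers $x^{k}\in\Phi^{k}(y^{k})$ along a subsequence, use elb to extract a convergent subsequence, use $g\mbox{-}\!\limsup_k\Phi^k\subset\Phi$ to place the limit in $\Phi(y)$, and conclude via the subsequence form of \eqref{eq:h-limsup}. The only difference is that the paper argues by contradiction with a single fixed level $a$ strictly between $v(\tilde y)$ and $\limsup_k v^k(y^k)$, whereas your direct argument with levels $\beta_j\to\alpha$ also treats the extended-value cases explicitly.
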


\begin{proof} We follow the line of reasoning in~\cite{LM97}.
On the contrary, suppose that there exist $\tilde y\in\R^m$, $y^k\to \tilde y$ and $a>0$ such that $\limsup_k v^k(y^k)>a>v(\tilde y)$. By the first inequality there exists $N$ such that for every $k$ there exists $n_k\ge k$ such that $v^{n_k}(y^{n_k})>a$. By definition of suprema, there exists $x^{n_k}\in \Phi^{n_k}(y^{n_k})$ such that $u^{n_k}(x^{n_k},y^{n_k})>a$ for such $k$.
By elb, we have $x^{n_k}\to \tilde x$ for some $\tilde x$, up to subsequences.
As $(y^{n_k},x^{n_k})\in\mathrm{gph}\,\Phi^{n_k}\to (\tilde y,\tilde x)$, we have
$(\tilde y,\tilde x)\in\limsup_k\mathrm{gph}\,\Phi^k$ that implies
$\tilde x\in\Phi(\tilde y)$ by hypothesis. Moreover, we have $v(\tilde y)\ge u(\tilde x,\tilde y)\ge \limsup_k u^{n_k}(x^{n_k},y^{n_k})\ge a$ by hypothesis, a contradiction.
\end{proof}

We obtain stability properties of the value function and of the solution mapping under continuous convergence of the data.

\begin{theorem}\label{MT0}Let $u^k\stackrel{c}{\to}u$ and $\Phi^k\stackrel{c}{\to}\Phi$. Then
\begin{description}
  \item[$(a)$] $g\mbox{-}\!\limsup\nolimits_k S^k\subset S$.
  \item[$(b)$] If $\{\Phi^k\}$ is elb, then $v^k\stackrel{c}{\to} v$ and $\{S^k\}$ is elb.
  \item[$(c)$] If $\{\Phi^k\}$ is selb, $\Phi^k$ is closed-valued and $u^k(\cdot,y)$ is usc for all $k$ and $y$, then~$\{S^k\}$ is selb, $g\mbox{-}\!\limsup\nolimits_k S^k$ is
      nonempty-bounded-valued and $S$ is nonempty-valued.
\end{description}
\end{theorem}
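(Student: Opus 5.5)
For part $(a)$ we take $(y,x)\in\limsup_k\mathrm{gph}\,S^k$. Then there are a subsequence $y^{k_j}\to y$ and points $x^{k_j}\in S^{k_j}(y^{k_j})$ with $x^{k_j}\to x$. Since $S^k\subset\Phi^k$ and $g\mbox{-}\!\limsup_k\Phi^k\subset\Phi$, we get $x\in\Phi(y)$, so $v(y)\ge u(x,y)$. For the reverse inequality we fix any $z\in\Phi(y)$. We cannot apply \eqref{eq:c-liminf-phi} to a subsequence directly, so we first extend $\{y^{k_j}\}$ to a full sequence $\tilde y^k\to y$ by the filling construction in the proof of Proposition~\ref{prop:c-convergence-subseq}. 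Then $\Phi(y)\subset\liminf_k\Phi^k(\tilde y^k)$ yields $z^k\in\Phi^k(\tilde y^k)$ with $z^k\to z$. Optimality of $x^{k_j}$ gives $u^{k_j}(x^{k_j},y^{k_j})\ge u^{k_j}(z^{k_j},y^{k_j})$. Passing to the limit, and using continuous convergence of $u^k$ along subsequences (Proposition~\ref{prop:c-convergence-subseq}), we obtain $u(x,y)\ge u(z,y)$. Taking the supremum over $z$ gives $u(x,y)=v(y)$, hence $x\in S(y)$. The case $\Phi(y)=\emptyset$ does not arise, because $x\in\Phi(y)$.

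For part $(b)$ the elb property of $\{S^k\}$ is Proposition~\ref{prop:elb-cont1}$(a)$. For continuous convergence of $v^k$ we use Remark~\ref{rem:h-conv}: it suffices to show $h\mbox{-}\!\limsup_k v^k\le v\le e\mbox{-}\!\liminf_k v^k$. The first inequality follows from Proposition~\ref{prop:main1}, since $u^k\stackrel{c}{\to}u$ gives $h\mbox{-}\!\limsup_k u^k\le u$ and $\Phi^k\stackrel{c}{\to}\Phi$ gives $g\mbox{-}\!\limsup_k\Phi^k\subset\Phi$. The second follows from Proposition~\ref{prop:main0}$(b)$, since $u\le e\mbox{-}\!\liminf_k u^k$ and $\Phi\subset c\mbox{-}\!\liminf_k\Phi^k$. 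The infinite values need a little care. If $\Phi(y)=\emptyset$, Proposition~\ref{prop:elb-cont}$(c)$ gives $\Phi^k(y^k)=\emptyset$ eventually, so $v^k(y^k)=-\infty=v(y)$ eventually. This case is in any event covered by the two inequalities, which hold in $\overline{\R}$.

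For part $(c)$, $\{S^k\}$ is selb by Proposition~\ref{prop:elb-cont1}$(b)$. For any $y$ we take the constant sequence $y^k=y$. Proposition~\ref{prop:elb-cont}$(b)$ then says $\limsup_k S^k(y)$ is nonempty and compact, and this set is contained in $(g\mbox{-}\!\limsup_k S^k)(y)$ by the pointwise formula, so the latter is nonempty. Boundedness of $(g\mbox{-}\!\limsup_k S^k)(y)$ comes from elb. For every $y^k\to y$ the set $\limsup_k S^k(y^k)$ lies in the ball $r\mathbb{B}$ from the elb property on a neighborhood $U$ of $y$, with $r$ independent of the particular sequence; so the union over all sequences is bounded. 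Finally, by part $(a)$ we have $\emptyset\ne(g\mbox{-}\!\limsup_k S^k)(y)\subset S(y)$, so $S$ is nonempty-valued.

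The main obstacle is the subsequence issue in $(a)$: continuous inner convergence of $\Phi^k$ is stated for full sequences while the graphical outer limit only supplies subsequences. The filling construction handles this. A second point needing care is the uniform radius in $(c)$, which requires the neighborhood characterization of elb rather than sequence-by-sequence boundedness.
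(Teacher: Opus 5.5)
Your proof is correct and follows the paper's argument. Parts $(b)$ and $(c)$ use the same ingredients: Propositions~\ref{prop:main0}$(b)$, \ref{prop:main1}, \ref{prop:elb-cont1}, \ref{prop:elb-cont}$(b)$ and part $(a)$, and your uniform-radius argument makes explicit the boundedness step the paper leaves implicit. In $(a)$ the paper sidesteps the subsequence issue by working from the outset with a full sequence $y^k\to y$ via \eqref{eq:g-limsup} and citing $v\le e\mbox{-}\!\liminf\nolimits_k v^k$ from Proposition~\ref{prop:main0}$(b)$; your filling construction and comparison point $z$ simply reprove that inequality inline.
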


\begin{proof} $(a)$ Let $y^k\to y$ be fixed. If $x\in\limsup\nolimits_k S^{k}(y^k)$, then there exists
$x^{k_j}\in S^{k_j}(y^{k_j})\to x$. Hence $x^{k_j}\in \Phi^{k_j}(y^{k_j})$ and $v^{k_j}(y^{k_j})=u^{k_j}(x^{k_j},y^{k_j})$ for all~$j$. As $x\in\limsup_k\Phi^k(y^k)$,
by~\eqref{eq:g-limsup} we have $x\in\Phi(y)$. After taking the limit to the last equality, using Proposition~\ref{prop:main0}$(b)$ and
Proposition~\ref{prop:c-convergence-subseq}, we obtain
$$v(y)\le\liminf\nolimits_k v^{k}(y^{k})\le\liminf\nolimits_j v^{k_j}(y^{k_j})=\lim\nolimits_ju^{k_j}(x^{k_j},y^{k_j})=u(x,y);$$
i.e., $v(y)= u(x,y)$ and thus $x\in S(y)$. Hence $\limsup\nolimits_k S^{k}(y^k)\subset S(y)$ and the inclusion follows from~\eqref{eq:g-limsup}.

$(b)$ The first part follows from Propositions~\ref{prop:main0}$(b)$ and~\ref{prop:main1}. The second part follows from Proposition~\ref{prop:elb-cont1}$(a)$.

$(c)$ The first part follows from Proposition~\ref{prop:elb-cont1}$(b)$. The remaining  follows from the first part, Proposition~\ref{prop:p2}, and~$(a)$.
\end{proof}

\begin{remark}\label{ex:1}
Concerning Theorem~\ref{MT0}, few remarks are needed.
\begin{enumerate}
\item In part $(b)$,  assumption elb cannot be dropped. Indeed, for
$u^k(x,y)=(1/k)|x|$, $u(x,y)\equiv 0$, and $\Phi^k(y)=\Phi(y)\equiv\R$,
we have $u^k\stackrel{c}{\to}u$, $\Phi^k\stackrel{c}{\to}\Phi$, $\{\Phi^k\}$ is not elb,
$S^k(y)\equiv\emptyset$, $v^k(y)\equiv +\infty$, $v(y)\equiv 0$, and $S(y)\equiv\R$. So~$(b)$--$(c)$ fail to hold. Clearly, $S\not\subset
c\mbox{-}\!\liminf\nolimits_k S^k$; thus, $S^k\not\stackrel{c}{\to}S$.
\begin{figure}[h!]
    \centering
    \begin{tikzpicture}[>=latex,scale=1.9]
    \draw[gray,-](-1.0,0)--(1.0,0);
    \draw[gray,->](0,-0.25)--(0,1.25);
    \draw[thick,-](-1,1)--(0,0)--(1,1)node at(1,1)[right]{$u_1$};
    \draw[thick,-](-1,0.5)--(0,0)--(1,0.5)node at (1,0.5)[right]{$u_2$};
    \draw[thick,-](-1,0.25)--(0,0)--(1,0.25) node at (1,0.25)[right]{$u_k$};
    \draw[-,thick](-1,0)--(1,0)node at (1,0)[below right]{$u$};
    \draw(0,-0.05)node[below ]{$\Phi^k=\Phi$};
    \draw(0.97,0.2)node[]{$\vdots$};
    \draw(-0.97,0.2)node[]{$\vdots$};
    \end{tikzpicture}
\caption{Assumption elb cannot be dropped}\label{fig:1}
\end{figure}

\item Continuous convergence of objective functions cannot be replaced by pointwise or uniform convergence (see~\cite{RW09} for definitions). Indeed, for
\[
u^k(x,y)=u(x,y)=
\left\{
  \begin{array}{ll}
    1, & \hbox{if $x\in\qq$;} \\
    0, & \hbox{elsewhere.}
  \end{array}
\right.
\;\;\mbox{ and }\;\; \Phi^k(y)=\Phi(y)\equiv[0,1],
\]
we have $u^k\stackrel{p}{\to}u$, $u^k\stackrel{u}{\to}u$, $u^k\not\stackrel{c}{\to}u$, $\Phi^k\stackrel{c}{\to}\Phi$, $v^k(y)= v(y)\equiv 1$, $S^k(y)= S(y)\equiv [0,1]\cap\qq$ and
$\lim_k S^k(y^k)=[0,1]$ for every $y^k\to y$. So~$(a)$ fails to hold.

\item Continuous convergence of feasible mappings cannot be replaced by uniform convergence $($and so by pointwise convergence$)$. Indeed, for
\[
u^k(x,y)=u(x,y)\equiv 1\;\,\mbox{ and }\;\,\Phi^k(y)=\Phi(y)=\begin{cases}
\{0\},& \hbox{if $y\in\mathbb{Q}$;}\\
\{1\},&\hbox{elsewhere,}
\end{cases}
\]
we have $u^k\stackrel{c}{\to}u$, $\Phi^k\stackrel{u}{\to}\Phi$, $\Phi^k\not\!\stackrel{c}{\to}\Phi$ and
\[
S^k(y)=S(y)=\begin{cases}
\{0\},& \hbox{if $y\in\mathbb{Q}$;}\\
\{1\},&\hbox{elsewhere}.
\end{cases}
\]
As $\limsup_k S^k(y^k)=\{0,1\}$ for every $y^k\to y$, we have $g\mbox{-}\!\limsup\nolimits_k S^k\not\subset S$. So~$(a)$ fails to hold.

\item The inclusion in part $(a)$ could be strict. Indeed, for $u^k(x,y)=u(x,y)=f(x)$, $\Phi^k(y)=[0,3+1/k]$, and $\Phi(y)\equiv[0,3]$ where $f\colon\R\to\R$ is defined by
\[
f(x)=\left\{
         \begin{array}{ll}
           x, & \hbox{if $x<1$;} \\
           2-x, & \hbox{if $1\le x<2$;} \\
           x-2, & \hbox{if $2\le x$,}
         \end{array}
       \right.
\]
we have $u^k\stackrel{c}{\to}u$, $\Phi^k\stackrel{c}{\to}\Phi$, $S^k(y)\equiv\{3+1/k\}$ and $S(y)\equiv \{1,3\}$. Thus, $\limsup_{k}S^k(y^k)\subsetneq S(y)$ for every $y^k\to y$, see Figure~\ref{fig:4}.

\begin{figure}[h!]
    \centering
  \begin{tikzpicture}[>=latex,scale=1.2]
  \draw[->,gray](-1,0)--(3.7,0);
  \draw[->,gray](0,-0.5)--(0,1.75);
  \draw[thick,-](-0.5,-0.5)--(1,1)--(2,0)--(3.5,1.5)node[right]{$f$};
  \draw[thick](0,-0.1)--(3.2,-0.1)node[below right]{$\Phi^k(y)$};
  \fill[](3.2,-0.1)circle(1.1pt) node at(3.2,0.2)[right]{$S^k(y)$};
  \draw[thick](0,-0.05)--(3,-0.05)node[below ]{$\Phi(y)$};
  \fill[](1,-0.05)circle(1.1pt);
\fill[](3,-0.05)circle(1.1pt)node at(3,0.2)[]{$S(y)$};
  \end{tikzpicture}
    \caption{Strict inclusion in part $(a)$}
    \label{fig:4}
\end{figure}

Even if $u$ and $u^k$ are concave the inclusion, in part $(a)$, could be strict. Indeed, for
$u^k(x,y)=\dfrac{1}{k}f(x)$, $u(x,y)\equiv 0$, and $\Phi^k(y)=\Phi(y)\equiv[0,3]$ where $f:\R\to\R$ is defined by
\[
f(x)=
\left\{
  \begin{array}{ll}
    x, & \hbox{if $x<1$;} \\
    1, & \hbox{if $1\leq x<2$;} \\
    3-x, & \hbox{if $2\leq x$,}
  \end{array}
\right.
\]
we have $u^k\stackrel{c}{\to}u$, $\Phi^k\stackrel{c}{\to}\Phi$, $S^k(y)\equiv [1,2]$ and $S(y)\equiv [0,3]$, see Figure~\ref{fig:5}. Consequently
$ \limsup_k S^k(y^k)\subsetneq S(y)$, for every $y^k\to y$.

\begin{figure}[H]
    \centering
  \begin{tikzpicture}[>=latex,scale=1.2]
  \draw[->,gray](-1,0)--(4,0);
  \draw[->,gray](0,-1)--(0,1.6);
  \draw[thick,-](-1,-1)--(1,1)--(2,1)--(4,-1)node[right]{$f=u_1$};
    \draw[thick,-](-1,-0.5)--(1,0.5)--(2,0.5)--(4,-0.5)node[right]{$u_2$};
  \draw[dotted](1,0)--(1,1);
  \draw[dotted](2,0)--(2,1);
  \draw[dotted](0,1)--(1,1);
  \draw[dotted](0,0.5)--(1,0.5);
  \draw(0,1)node[left]{$1$};
  \draw(1.5,0.35)node[]{$\vdots$};
  \draw[thick,-](-1,0)--(4,0)node[below right]{$u$};
\draw[thick,-](0,-0.05)--(3,-0.05)node[above right]{$\Phi^k=\Phi=S$};
\draw[thick,-](1,-0.1)--(2,-0.1)node at (1.5,-0.1) [below]{$S^k$};
  \draw(1,0)node[below]{$1$};
  \draw(2,0)node[below]{$2$};
  \end{tikzpicture}
    \caption{Strict inclusion in part $(a)$ even if concavity is assumed}
    \label{fig:5}
\end{figure}
\end{enumerate}
\end{remark}

\begin{remark}
We point out that inclusion $S\subset c\mbox{-}\!\liminf\nolimits_k S^k$ does not hold in Theorem~\ref{MT0} even for continuous mappings and under continuous convergence. Indeed, for $u^k(x,y)=\dfrac{1}{k}|x|$,  $u(x,y)\equiv 0$
and
$\Phi^k(y)=\Phi(y)\equiv[-1,1]$, we have $u^k\stackrel{c}{\to}u$, $\Phi^k\stackrel{c}{\to}\Phi$, $v^k(y)\equiv 1/k$, $v(y)\equiv 0$, $S^k(y)\equiv\{-1,1\}$ and $S(y)\equiv[-1,1]$.
Thus,
$S(y)\not\subset \liminf_k S^k(y^k)$ for every $y^k\to y$.
\begin{figure}[h!]
    \centering
    \begin{tikzpicture}[>=latex,scale=2]
    \draw[gray,-](-1.0,0)--(1.0,0);
    \draw[gray,->](0,-0.25)--(0,1.25);
    \draw[thick,-](-1,1)--(0,0)--(1,1)node at(1,1)[right]{$u_1$};
    \draw[thick,-](-1,0.5)--(0,0)--(1,0.5)node at (1,0.5)[right]{$u_2$};
    \draw[thick,-](-1,0.25)--(0,0)--(1,0.25) node at (1,0.25)[right]{$u_k$};
    \draw[-,thick](-1,0)--(1,0)node at (1,0)[right]{$u$};
    \draw(0,-0.05)node[below ]{$\Phi^k=\Phi=S$};
    \fill[](-1,0)circle(1pt);
    \fill[](1,0)circle(1pt)node at (-1,0)[left]{$S^k$};
    \draw(0.97,0.2)node[]{$\vdots$};
    \draw(-0.97,0.2)node[]{$\vdots$};
    \draw(-1,0)node[below left]{$-1$};
    \draw(1,0)node[below right]{$1$};
    \end{tikzpicture}
\caption{The inclusion $S\subset c\mbox{-}\!\liminf\nolimits_k S^k$ does not hold in general.}\label{fig:6}
\end{figure}
\end{remark}

By setting $u^k\equiv u$ and~$\Phi^k\equiv\Phi$ in Propositions~\ref{prop:main0}$(b)$ and~\ref{prop:main1}, and Theorem~\ref{MT0}, we infer the following result.

\begin{corollary} \label{cor:berge-variant}
\begin{description}
  \item[$(a)$] If $u\colon\R^{n+m}\to\overline{\R}$ is lsc and $\Phi\colon \R^m\tos \R^n$ is isc, then $v\colon\R^{m}\to\overline{\R}$ is lsc.
  \item[$(b)$] If $u\colon\R^{n+m}\to\overline{\R}$ is usc and $\Phi\colon \R^m\tos \R^n$ is osc, locally bounded, then $v\colon\R^{m}\to\overline{\R}$
      is usc.
  \item[$(c)$] If $u\colon\R^{n+m}\to \overline{\R}$ is continuous, $\Phi\colon \R^m\tos \R^n$ is continuous and locally bounded, then $v\colon\R^{m}\to
      \overline{\R}$ is
      continuous and $S\colon \R^m\tos \R^n$ is nonempty-valued, locally bounded and osc.
\end{description}
\end{corollary}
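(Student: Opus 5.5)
The plan is to specialize the sequential results to the stationary sequences $u^k\equiv u$ and $\Phi^k\equiv\Phi$, and then translate each convergence inclusion into the corresponding continuity property. The key dictionary is the following. For constant sequences, $u\le e\mbox{-}\!\liminf_k u$ holds iff $u$ is lsc, by \eqref{eq:e-liminf}, since $u(x)\le\liminf_k u(x^k)$ for all $x^k\to x$. Likewise $h\mbox{-}\!\limsup_k u\le u$ holds iff $u$ is usc, by \eqref{eq:h-limsup}, and $u\stackrel{c}{\to}u$ holds iff $u$ is continuous. For multifunctions, $\Phi\subset c\mbox{-}\!\liminf_k\Phi$ is exactly isc of $\Phi$, by \eqref{eq:c-liminf-phi}, and $g\mbox{-}\!\limsup_k\Phi\subset\Phi$ is exactly osc, by \eqref{eq:g-limsup}. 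Finally, the constant sequence $\{\Phi\}$ is elb iff $\Phi$ is locally bounded. The conclusions translate the same way: $v\le e\mbox{-}\!\liminf_k v$ means $v$ is lsc, $h\mbox{-}\!\limsup_k v\le v$ means $v$ is usc, and $g\mbox{-}\!\limsup_k S\subset S$ means $S$ is osc.

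With this dictionary the three parts follow directly. Part $(a)$ follows from Proposition~\ref{prop:main0}$(b)$. Part $(b)$ follows from Proposition~\ref{prop:main1}. In part $(c)$, continuity of $v$ is Theorem~\ref{MT0}$(b)$, or equivalently $(a)$ and $(b)$ combined. Osc of $S$ is Theorem~\ref{MT0}$(a)$. Local boundedness of $S$ follows from $S\subset\Phi$, which is also Proposition~\ref{prop:elb-cont1}$(a)$ for the constant sequence.

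The delicate point is nonemptiness of $S$ in $(c)$. Applying Theorem~\ref{MT0}$(c)$ requires the constant sequence to be selb, that is, $\Phi$ nonempty-valued, so that the sets $\Phi(y^k)$ are nonempty. I therefore read ``nonempty-valued'' for $S$ as being stated under the implicit hypothesis that $\Phi$ is nonempty-valued; otherwise $S(y)=\emptyset$ wherever $\Phi(y)=\emptyset$. Under that hypothesis, continuity of $\Phi$ gives closed values (Remark~\ref{rem:continuous2}), and local boundedness then makes $\Phi(y)$ compact. Since $u(\cdot,y)$ is usc, the supremum over $\Phi(y)$ is attained, so $S(y)\ne\emptyset$. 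One could equally invoke Theorem~\ref{MT0}$(c)$ directly, noting that $\Phi$ is closed-valued and $u(\cdot,y)$ is usc.

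A second subtlety is that $u$ is extended-valued. Proposition~\ref{prop:main1} and the continuous-convergence arguments work in $\overline{\R}$, so no finiteness assumption is needed. Attainment of the maximum for an usc extended-valued function on a compact set still holds, although the value may be $+\infty$. I expect the main obstacle to be this bookkeeping, nonemptiness together with the extended values, rather than any of the limit arguments.
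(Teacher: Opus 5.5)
Your route is the paper's: specialize Propositions~\ref{prop:main0}$(b)$ and~\ref{prop:main1} and Theorem~\ref{MT0} to the stationary sequences $u^k\equiv u$, $\Phi^k\equiv\Phi$. Your dictionary is correct: lsc/usc/continuity of $u$, isc/osc of $\Phi$, elb versus local boundedness, and $g\mbox{-}\!\limsup_k S\subset S$ versus osc of $S$. Parts $(a)$, $(b)$, and the continuity, osc and local-boundedness claims in $(c)$ go through as you describe.

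The gap is the nonemptiness of $S$ in $(c)$. You add the hypothesis that $\Phi$ is nonempty-valued and suggest the statement needs it ("otherwise $S(y)=\emptyset$ wherever $\Phi(y)=\emptyset$"). So as written you prove a weaker statement. In fact, under the standing assumption that $\Phi$ is proper, nonempty-valuedness follows from the hypotheses of $(c)$. The missing idea is that $\mathrm{dom}\,\Phi$ is both open and closed in $\R^m$.
\begin{itemize}
\item It is open because $\Phi$ is isc. Take $x\in\Phi(y)$ and $y^k\to y$ with $\Phi(y^k)=\emptyset$ for all $k$. Then $\liminf_k\Phi(y^k)=\emptyset$, which does not contain $x$, contradicting isc.
\item It is closed because $\Phi$ is osc and locally bounded. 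Take $y^k\in\mathrm{dom}\,\Phi$ with $y^k\to y$, and pick $x^k\in\Phi(y^k)$. Local boundedness at $y$ gives a convergent subsequence $x^{k_j}\to x$, and osc gives $x\in\Phi(y)$.
\end{itemize}
Since $\R^m$ is connected and $\mathrm{dom}\,\Phi\neq\emptyset$, we get $\mathrm{dom}\,\Phi=\R^m$. Hence there is no $y$ with $\Phi(y)=\emptyset$, and your caveat is vacuous.

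With this in hand, the stationary sequence is selb, and Theorem~\ref{MT0}$(c)$ applies. Alternatively, use your direct argument: $\Phi(y)$ is nonempty and compact and $u(\cdot,y)$ is usc, so the supremum is attained in $\overline{\R}$. Either way, $S$ is nonempty-valued with no extra hypothesis.

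The paper's one-line proof also just invokes Theorem~\ref{MT0}$(c)$, whose selb hypothesis for a stationary sequence is exactly nonempty-valuedness plus local boundedness of $\Phi$. The clopen argument above is what justifies that step there too.
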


\begin{remark} Since $\Phi$ is usc and compact-valued iff $\Phi$ is osc and locally bounded; and $\Phi$ is lsc iff $\Phi$ is isc (see~\cite{RW09}), this result extends Proposition~\ref{prop:berge-prop} as follows:
\begin{enumerate}
\item Parts~$(a)$--$(b)$ hold for proper feasible mappings and not only for nonempty-valued mappings as in Proposition~\ref{prop:berge-prop}$(a)$,$(b)$. Indeed, we consider part~$(a)$. Let $y^k\to y$ be fixed. If $\Phi(y)=\emptyset$, then $v(y)=-\infty\le \liminf_k v(y^k)$ and $v$ is lsc at $y$. If $\Phi(y)\ne\emptyset$, then for every
$\varepsilon>0$ there exists $x_\varepsilon\in\Phi(y)$ such that $v(y)-\varepsilon<u(x_\varepsilon,y)$. As $\Phi$ is isc, we have $x_\varepsilon\in\liminf_k\Phi(y^k)$ and there exists $x^k\in\Phi(y^k)\to x_\varepsilon$. By hypothesis,
$v(y)-\varepsilon<u(x_\varepsilon,y)\le\liminf\nolimits_k u(x^k,y^k)\le\liminf\nolimits_k v(y^k)$.
As $\varepsilon>0$ was arbitrary, we have $v(y)\le \liminf_k v(y^k)$ and $v$ is lsc at $y$.

\item Part~$(c)$ holds for extended-valued functions and not only for finite functions as in Proposition~\ref{prop:berge-prop}$(c)$. Indeed, for $u(x,y)=f(x)$ where $f$ is the extended-valued continuous function $f(x)=-\infty$ if $x\le -\pi/2$, $f(x)=\tan x$ if $|x|<\pi/2$, $f(x)=+\infty$ if $x\ge\pi/2$ $($see~\cite[Example~3.3.1]{TW20}$)$, and for $\Phi(y)=\{y\}$ the hypothesis of $(c)$ holds. As $v(y)=f(y)$ and $S(y)=\{y\}$, the conclusions of part~$(c)$ hold.

\item The solution mapping $S$ may not be isc (so, neither continuous) under the hypotheses of part $(c)$. Indeed, for the continuous function $u(x,y)=|xy|$ and the continuous, locally bounded mapping $\Phi(y)\equiv[-1,1]$ for all $y\in\R$, we have $v(y)=|y|$ for all $y\in\R$. Moreover, $S(y)=\{-1,1\}$ if $y\ne 0$ and $S(0)=[-1,1]$. Clearly, for $y^k=1/k\to 0$ one has $S(0)\not\subset \liminf_k S(y^k)$.
\end{enumerate}
\end{remark}

We derive stability properties of the value function and the solution mapping under continuous convergence of the objective functions and graphical convergence of the feasible
mappings.

\begin{theorem}\label{MT1} Let $u^k\stackrel{c}{\to}u$ and $\Phi^k\stackrel{g}{\to}\Phi$. Then
\begin{description}
  \item[$(a)$]  If $\{\Phi^k\}$ is eub, then $\{S^k\}$ is eub and $S$ is bounded, compact-valued.
  \item[$(b)$] Let $\{\Phi^k\}$ be elb. Then
\begin{description}
  \item[$(i)$] $v^k\stackrel{h}{\to} v$ and $\{S^k\}$ is elb.
  \item[$(ii)$] $\forall y\in\R^m$, $\exists y^k\to y;$ $v^k(y^k)\to v(y)$ and $\limsup\nolimits_k S^k(y^k)\subset S(y)$.
\end{description}
\end{description}
\end{theorem}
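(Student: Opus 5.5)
The plan is to mirror the proof of Theorem~\ref{MT0}, replacing the continuous inner limit of $\{\Phi^k\}$ by the graphical inner limit. The two halves of hypo-convergence will then come from Proposition~\ref{prop:main0}$(a)$ and Proposition~\ref{prop:main1}. Throughout I will use the splitting $\Phi^k\stackrel{g}{\to}\Phi$ iff $\Phi\subset g\mbox{-}\!\liminf_k\Phi^k$ and $g\mbox{-}\!\limsup_k\Phi^k\subset\Phi$. By Remark~\ref{rem:h-conv}, $u^k\stackrel{c}{\to}u$ gives $h\mbox{-}\!\limsup_k u^k\le u\le e\mbox{-}\!\liminf_k u^k$, and the limit $u$ is continuous. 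In particular $u(\cdot,y)$ is usc.

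For $(a)$:
\begin{itemize}
\item Since $S^k\subset\Phi^k$, eub passes from $\{\Phi^k\}$ to $\{S^k\}$ by Proposition~\ref{prop:elb-cont1}$(a)$.
\item $\Phi$ is bounded by Proposition~\ref{prop:elb-cont}$(d)$, applied with $\Phi\subset g\mbox{-}\!\liminf_k\Phi^k$.
\item $\Phi$ is osc by Remark~\ref{rem:continuous2}, hence closed-valued, hence compact-valued.
\item Therefore $S\subset\Phi$ is bounded. Each $S(y)=\{x\in\Phi(y)\colon u(x,y)\ge v(y)\}$ is closed, because $u(\cdot,y)$ is usc and $\Phi(y)$ is closed. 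So $S$ is compact-valued.
\end{itemize}
The case $v(y)=\pm\infty$ needs a small check. If $v(y)=+\infty$, compactness of $\Phi(y)$ and upper semicontinuity of $u(\cdot,y)$ force the supremum to be attained, so the superlevel set is still closed. If $\Phi(y)=\emptyset$, then $S(y)=\emptyset$.

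For $(b)(i)$, I first need $\Phi$ compact-valued so that Proposition~\ref{prop:main0}$(a)$ applies. $\Phi$ is closed-valued by osc, and local boundedness should follow from elb as in Proposition~\ref{prop:elb-cont}$(a)$, but using the graphical inner limit. Fix $y$, take the neighborhood $U$ with $\Phi^k(U)\subset r\mathbb{B}$ for $k\ge N$, and take $x\in\Phi(z)$ with $z\in\mathrm{int}\,U$. By~\eqref{eq:g-liminf} there are $z^k\to z$ and $x^k\in\Phi^k(z^k)$ with $x^k\to x$. Eventually $z^k\in U$, so $x\in r\mathbb{B}$. With $\Phi$ compact-valued, Proposition~\ref{prop:main0}$(a)$ gives $v\le h\mbox{-}\!\liminf_k v^k$ and Proposition~\ref{prop:main1} gives $h\mbox{-}\!\limsup_k v^k\le v$, so $v^k\stackrel{h}{\to}v$. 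The elb property of $\{S^k\}$ is Proposition~\ref{prop:elb-cont1}$(a)$.

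For $(b)(ii)$, fix $y$ and split into two cases.
\begin{itemize}
\item If $y\notin\mathrm{dom}\,\Phi$, then $v(y)=-\infty$. Any $y^k\to y$ (e.g.\ $y^k\equiv y$) gives $\limsup_k v^k(y^k)\le v(y)$ by Proposition~\ref{prop:main1}, so $v^k(y^k)\to-\infty$. By Proposition~\ref{prop:elb-cont}$(c)$, $\Phi^k(y^k)=\emptyset$ eventually, hence $\limsup_k S^k(y^k)=\emptyset\subset S(y)$.
\item If $y\in\mathrm{dom}\,\Phi$, then $\Phi(y)$ is compact and nonempty and $u(\cdot,y)$ is usc, so I pick $x\in S(y)$ attaining $v(y)$. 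As in the proof of Proposition~\ref{prop:main0}$(a)$, choose $(y^k,x^k)\to(y,x)$ with $x^k\in\Phi^k(y^k)$. Then $\liminf_k v^k(y^k)\ge\lim_k u^k(x^k,y^k)=u(x,y)=v(y)$, and Proposition~\ref{prop:main1} gives $\limsup_k v^k(y^k)\le v(y)$. Hence $v^k(y^k)\to v(y)$ along this particular sequence.
\end{itemize}
To get $\limsup_k S^k(y^k)\subset S(y)$ along the same sequence, take $x^{k_j}\in S^{k_j}(y^{k_j})$ with $x^{k_j}\to\bar x$. Then $\bar x\in\Phi(y)$ by $g\mbox{-}\!\limsup_k\Phi^k\subset\Phi$. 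By Proposition~\ref{prop:c-convergence-subseq}, $u(\bar x,y)=\lim_j u^{k_j}(x^{k_j},y^{k_j})=\lim_j v^{k_j}(y^{k_j})=v(y)$, so $\bar x\in S(y)$.

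The main obstacle is that, unlike Theorem~\ref{MT0}, there is no $\liminf$ control of $v^k$ along arbitrary sequences $y^k\to y$. The convergence $v^k(y^k)\to v(y)$ must therefore be produced along the special sequence generated by a maximizer, and the same sequence must be reused for the solution inclusion. The infinite-valued cases also need care. If $v(y)=+\infty$, the limit identities still hold in $\overline{\R}$. If $v(y)$ is finite, the equality $u(\bar x,y)=v(y)$ is legitimate.
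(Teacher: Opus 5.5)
Your proposal is correct and follows essentially the paper's proof: the same combination of Propositions~\ref{prop:elb-cont}$(d)$, \ref{prop:elb-cont1}$(a)$, \ref{prop:main0}$(a)$ and~\ref{prop:main1}, and the same $\limsup$ argument for $(b)(ii)$. You depart from the paper only by spelling out details it leaves implicit. You show that elb together with $\Phi\subset g\mbox{-}\!\liminf_k\Phi^k$ makes $\Phi$ locally bounded, hence compact-valued, which Proposition~\ref{prop:main0}$(a)$ requires. You also construct the sequence $y^k\to y$ with $v^k(y^k)\to v(y)$ directly from a maximizer, whereas the paper obtains it from hypo-convergence via Remark~\ref{rem:h-conv}.
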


\begin{proof} By Remarks~\ref{rem:h-conv}$(1)$ and~\ref{rem:continuous2}$(2)$, we infer that $u$ is continuous and $\Phi$ is osc.

$(a)$ The mapping $S$ is bounded since $S\subset \Phi$ and $\Phi$ is bounded by Proposition~\ref{prop:elb-cont}$(d)$. That $\{S^k\}$ is eub follows from Proposition~\ref{prop:elb-cont1}$(a)$. The last part follows from the first part and the
closed-valuedness of $S$ since $u$
is usc and $\Phi$ is closed-valued.

$(b)$, $(i)$ The first part follows from Propositions~\ref{prop:main0}$(a)$ and~\ref{prop:main1}. The second part follows from Proposition~\ref{prop:elb-cont1}$(a)$.

$(ii)$ By~$(i)$ and Remark~\ref{rem:h-conv}$(1)$ for every $y$ there exists $y^k\to y$ such that $v^k(y^k)\to v(y)$. If $x\in\limsup\nolimits_k S^k(y^k)$, then there exists
$x^{k_j}\in S^{k_j}(y^{k_j})\to x$. As $x^{k_j}\in \Phi^{k_j}(y^{k_j})\to x$, we have $x\in\limsup\nolimits_k \Phi^k(y^k)$ that
by~\eqref{eq:g-limsup} implies $x\in\Phi(y)$. After taking the liminf to $v^{k_j}(y^{k_j})=u^{k_j}(x^{k_j},y^{k_j})$, we obtain $v(y)\le u(x,y)$; i.e., $v(y)=u(x,y)$ and $x\in S(y)$.
Hence $\limsup\nolimits_k S^k(y^k)\subset S(y)$.
\end{proof}

Based on the above result, we derive a different type of stability result for the solution mapping.

\begin{corollary}\label{cor:stability-c-g}
Let $u^k\stackrel{c}{\to}u$ and $\Phi^k\stackrel{g}{\to}\Phi$.
If $\{\Phi^k\}$ is elb and env with $\Phi^k$ closed-valued and $u^k(\cdot,y)$ usc for all $k$ and $y$, then $\limsup\nolimits_k S^k(y^k)$ is nonempty and compact for every $y^k\to y$, and $(g\mbox{-}\!\limsup\nolimits_k S^k)(y)\cap S(y)\ne\emptyset$ for every $y$.
\end{corollary}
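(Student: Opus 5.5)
The plan is to combine Proposition~\ref{prop:elb-cont1}$(b)$, Proposition~\ref{prop:elb-cont}$(b)$ and Theorem~\ref{MT1}$(b)(ii)$. The hypotheses are that $\{\Phi^k\}$ is elb and env, that each $\Phi^k$ is closed-valued, and that each $u^k(\cdot,y)$ is usc. These are exactly what is needed to upgrade elb to selb for the feasible mappings and then to pass the property to the solution mappings.

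\emph{First part.} Since $\{\Phi^k\}$ is elb and env, it is selb: for any $y^k\to y$ the sequence $\{\Phi^k(y^k)\}$ is eventually bounded and eventually nonempty. Proposition~\ref{prop:elb-cont1}$(b)$ then gives that $\{S^k\}$ is selb. Here closed-valuedness of $\Phi^k$ and upper semicontinuity of $u^k(\cdot,y^k)$ make each nonempty, eventually bounded, hence compact, $\Phi^k(y^k)$ produce a nonempty argmax. Since $\{S^k\}$ is selb, Proposition~\ref{prop:elb-cont}$(b)$ applied to $\{S^k\}$ yields that $\limsup_k S^k(y^k)$ is nonempty and compact for every $y^k\to y$.

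\emph{Second part.} Fix $y$. By Theorem~\ref{MT1}$(b)(ii)$, which only needs $u^k\stackrel{c}{\to}u$, $\Phi^k\stackrel{g}{\to}\Phi$ and elb, there is a particular sequence $y^k\to y$ with $v^k(y^k)\to v(y)$ and $\limsup_k S^k(y^k)\subset S(y)$. By the first part, this $\limsup_k S^k(y^k)$ is nonempty, so pick $x$ in it. Then $x\in S(y)$. Moreover, by the pointwise formula $(g\mbox{-}\!\limsup_k S^k)(y)=\bigcup_{\{y^k\to y\}}\limsup_k S^k(y^k)$, we also have $x\in(g\mbox{-}\!\limsup_k S^k)(y)$. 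Hence the intersection is nonempty.

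The only point needing care is the first part, namely verifying that selb of $\{\Phi^k\}$ follows from elb plus env. This is immediate from the remark after the definition (``an elb and env sequence of multifunctions is selb''). The argmax nonemptiness is already packaged in Proposition~\ref{prop:elb-cont1}$(b)$, so I expect no real obstacle; the proof is essentially an assembly of earlier results.
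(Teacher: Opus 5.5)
Your proof is correct and follows the paper's argument: elb plus env gives selb, Proposition~\ref{prop:elb-cont1}$(b)$ transfers selb to $\{S^k\}$, Proposition~\ref{prop:elb-cont}$(b)$ gives that the outer limits are nonempty and compact, and the pointwise formula for $g\mbox{-}\!\limsup_k S^k$ finishes the argument. Your explicit use of Theorem~\ref{MT1}$(b)(ii)$, to obtain a sequence $y^k\to y$ with $\limsup_k S^k(y^k)\subset S(y)$, is exactly the step that the paper's brief ``follows from this and the pointwise formula'' leaves implicit, so your version is slightly more complete.
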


\begin{proof}
As $\{\Phi^k\}$ is selb, $\{S^k\}$ is selb by Proposition~\ref{prop:elb-cont1}$(b)$. This and Proposition~\ref{prop:elb-cont}$(b)$ imply that $\limsup\nolimits_k S^k(y^k)$ is nonempty compact for every $y^k\to y$. The last part follows from this and the pointwise formula for the graphical outer limit.
\end{proof}

\begin{remark}\label{ex:2}
A few remarks are needed regarding Theorem~\ref{MT1}.
\begin{enumerate}
\item The continuous convergence of objective functions cannot be replaced by pointwise or uniform convergence. This is shown in
Remark~\ref{ex:1}$(2)$ where $\Phi^k\stackrel{g}{\to}\Phi$.

\item Let $u^k(x,y)=u(x,y)=x$, $\Phi^k$, $\widetilde{\Phi}$ and $\widehat{\Phi}$  be the following multifunctions (see~\cite[p.~170]{RW09})
$$\Phi^k(y)=
	\left\{
	\begin{array}{ll}
		[0,1], & \hbox{if $0\le y\le 1-1/k$;} \\
		\displaystyle{[0, 2ky-2k+3]}, & \hbox{if $1-1/k\le y\le 1-1/(2k)$;} \\
		\displaystyle{[0, -2ky+2k+1]}, & \hbox{if $1-1/(2k)\le y\le 1$;}\\
		\emptyset, & \hbox{elsewhere,}
	\end{array}
	\right.$$
$$\widetilde{\Phi}(y)=\left\{
	\begin{array}{ll}
		[0,1], & \hbox{if $0\le y<1$;} \\
		\displaystyle{[0,2]}, & \hbox{if $y=1$;} \\
		\emptyset, & \hbox{elsewhere,}
	\end{array}
	\right.\;\;\mbox{ and }\;\; \widehat{\Phi}(y)=
	\left\{
	\begin{array}{ll}
		[0,1], & \hbox{if $0\le y\le 1$;} \\
		\emptyset, & \hbox{elsewhere.}
	\end{array}
	\right.$$
Their graphs are given in the following figure.
\begin{figure}[H]
    \centering
\begin{tikzpicture}[>=latex,scale=1.5]
\draw[fill=gray!30,samples=100](0,1)--(2/3,1)--(5/6,2)--(1,1)--(1,0)--(0,0)--(0,1);
\draw[-](-0.35,0)--(1.75,0);
\draw[-](0,-0.35)--(0,2.25);
\draw[dotted](2/3,0)--(2/3,1);
\draw[dotted](5/6,0)--(5/6,2);
\draw(2/3,0)node[below]{$1-\frac{1}{k}$};
\draw[dotted](0,2)--(5/6,2);
\draw(0,2)node[left]{$2$};
\draw(0,1)node[left]{$1$};
\draw(1,0)node[below]{$1$};
\draw(0.65,-0.25)node[below]{$\Phi^k$};
\end{tikzpicture}
\hspace{0.5cm}
\begin{tikzpicture}[>=latex,scale=1.5]
\fill[gray!30,samples=100](0,1)--(1,1)--(1,0)--(0,0)--(0,1);
\draw[-](-0.35,0)--(1.75,0);
\draw[-](0,-0.35)--(0,2.25);
\draw[](0,1)--(1,1);
\draw[thick](1,0)--(1,2);
\draw(0,1)node[left]{$1$};
\draw(1,0)node[below]{$1$};
\draw[dotted](0,2)--(1,2);
\draw(0,2)node[left]{$2$};
\draw(0.75,-0.25)node[below]{$\widetilde{\Phi}$};
\end{tikzpicture}
\hspace{0.5cm}
\begin{tikzpicture}[>=latex,scale=1.5]
\fill[gray!30,samples=100](0,1)--(1,1)--(1,0)--(0,0)--(0,1);
\draw[-](-0.35,0)--(1.75,0);
\draw[-](0,-0.35)--(0,2.25);
\draw[](0,1)--(1,1);
\draw[thick](1,0)--(1,1);
\draw(0,1)node[left]{$1$};
\draw(1,0)node[below]{$1$};
\draw(0.75,-0.25)node[below]{$\widehat{\Phi}$};
\end{tikzpicture}
\end{figure}

We observe that $\Phi^k\stackrel{g}{\to}\widetilde{\Phi}$, $\Phi^k\stackrel{p}{\to}\widehat{\Phi}$  (according to~\cite[Definition 5.31]{RW09}),
$u^k\stackrel{c}{\to}u$, $\{\Phi^k\}$ is elb,
$$v^k(y)=\left\{
  \begin{array}{ll}
    1, & \hbox{if $0\le y\le 1-\frac{1}{k}$;} \\
    2ky-2k+3, & \hbox{if $1-\frac{1}{k}\le y\le 1-\frac{1}{2k}$;} \\
    -2ky+2k+1, & \hbox{if $1-\frac{1}{2k}\le y\le 1$;}\\
+\infty, & \hbox{elsewhere;}
  \end{array}
\right.$$
$$S^k(y)=\left\{
  \begin{array}{ll}
    \{1\}, & \hbox{if $0\le y\le 1-\frac{1}{k}$;} \\
    \{2ky-2k+3\}, & \hbox{if $1-\frac{1}{k}\le y\le 1-\frac{1}{2k}$;} \\
    \{-2ky+2k+1\}, & \hbox{if $1-\frac{1}{2k}\le y\le 1$;}\\
\emptyset, & \hbox{elsewhere.}
  \end{array}
\right.$$

Under graphical or pointwise convergence of $\{\Phi^k\}$, we observe different convergence properties of the sequences $\{v^k\}$ and
 $\{S^k\}$:

\noindent $(i)$ If $\Phi^k\stackrel{g}{\to}\widetilde{\Phi}$, then $v^k\stackrel{h}{\to} \widetilde{v}$ and $g\mbox{-}\!\limsup\nolimits_k S^k\not\subset \widetilde{S}$ where
\[
\widetilde{v}(y)=\left\{
  \begin{array}{ll}
    1, & \hbox{if $0\le y<1$;} \\
   2, & \hbox{if $y=1$;} \\
    +\infty, & \hbox{elsewhere,}
  \end{array}
\right.\;\,\mbox{ and }\;\, \widetilde{S}(y)=\left\{
  \begin{array}{ll}
    \{1\}, & \hbox{if $0\le y<1$;} \\
   \{2\}, & \hbox{if $y=1$;} \\
    \emptyset, & \hbox{elsewhere.}
  \end{array}
\right.
\]

Indeed, the latter holds since for $y=1$ and $y^k=1-1/k$, we have
$$
\limsup\nolimits_k S^k(y^k)=\{1\}\not\subset \widetilde{S}(y)=\{2\}.
$$
So the counterpart of Theorem~\ref{MT0}$(a)$, that is expected for graphical convergence, does not hold.

\noindent $(ii)$ If $\Phi^k\stackrel{p}{\to}\widehat{\Phi}$; i.e., $\Phi^k(x)\to\widehat{\Phi}(x)$ for all $x$, then $v^k\not\stackrel{h}{\to}\widehat{v}$ and $g\mbox{-}\!\limsup\nolimits_k S^k\not\subset \widehat{S}$ where
\[
\widehat{v}(y)=
\left\{
  \begin{array}{ll}
    1, & \hbox{if $0\le y\le 1$;} \\
    +\infty, & \hbox{elsewhere,}
  \end{array}
\right.\;\,\mbox{ and }\;\,
\widehat{S}(y)=
\left\{
  \begin{array}{ll}
    \{1\}, & \hbox{if $0\le y\le 1$;} \\
    \emptyset, & \hbox{elsewhere.}
  \end{array}
\right.
\]

Indeed, for $y=1$ and $y^k=1-1/(2k)$, we have $\limsup_kv^k(y^k)=2\not\le\widehat{v}(y)=1$ and $\limsup\nolimits_k S^k(y^k)=\{2\}\not\subset \widehat{S}(y)=\{1\}$.

The second instance shows that graphical convergence of feasible mappings
cannot be replaced by its pointwise convergence.

Below we present the geometric representation of the functions $v^k$, $\widetilde{v}$ and $\widehat{v}$.
\begin{figure}[H]
    \centering
\begin{tikzpicture}[>=latex,scale=1.65]
\draw[samples=100](0,1)--(2/3,1)--(5/6,2)--(1,1);
\draw[-](-0.35,0)--(1.75,0);
\draw[-](0,-0.35)--(0,2.25);
\draw[dotted](2/3,0)--(2/3,1);
\draw[dotted](5/6,0)--(5/6,2);
\draw[dotted](1,0)--(1,1);
\fill[](1,1)circle(1pt);
\fill[](0,1)circle(1pt);
\draw(2/3,0)node[below]{$1-\frac{1}{k}$};
\draw[dotted](0,2)--(5/6,2);
\draw(0,2)node[left]{$2$};
\draw(0,1)node[left]{$1$};
\draw(1,0)node[below]{$1$};
\draw(0.75,-0.25)node[below]{$v^k$};
\end{tikzpicture}
\hspace{0.5cm}
\begin{tikzpicture}[>=latex,scale=1.65]
\draw[-](-0.35,0)--(1.75,0);
\draw[-](0,-0.35)--(0,2.25);
\draw[dotted](1,0)--(1,2);
\draw[samples=100](0,1)--(1,1);
\fill[](0,1)circle(1pt);
\fill[](1,2)circle(1pt);
\draw[](1,1)circle(1pt);
\fill[white](1,1)circle(1pt);
\draw(0,1)node[left]{$1$};
\draw(1,0)node[below]{$1$};
\draw[dotted](0,2)--(1,2);
\draw(0,2)node[left]{$2$};
\draw(0.75,-0.25)node[below]{$\widetilde{v}$};
\end{tikzpicture}
\hspace{0.5cm}
\begin{tikzpicture}[>=latex,scale=1.65]
\draw[-](-0.35,0)--(1.75,0);
\draw[-](0,-0.35)--(0,2.25);
\draw[dotted](1,0)--(1,1);
\draw[samples=100](0,1)--(1,1);
\fill[](0,1)circle(1pt);
\fill[](1,1)circle(1pt);
\draw(0,1)node[left]{$1$};
\draw(1,0)node[below]{$1$};
\draw(0.75,-0.25)node[below]{$\widehat{v}$};
\end{tikzpicture}
\end{figure}

\item Lignola and Morgan~\cite[Propositions~4.3.1--4.3.2]{LM92} proved that hypo-convergence $v^k\stackrel{h}{\to} v$ holds under each of the following assumptions that differ from ours$:$
\begin{description}
\item[$(i)$] $h\mbox{-}\!\limsup\nolimits_k u^k\le u$, $\Phi^k\stackrel{c}{\to}\Phi$ with $\{\Phi^k\}$ env and elb, and for any $(x,y)$ there exists $\tilde x^k\to x$ such that $u(x,y)\le\liminf\nolimits_k u^k(\tilde x^k,y^k)$ for every $y^k\to y$.

\item[$(ii)$] $u^k\stackrel{h}{\to} u$, $g\mbox{-}\!\limsup\nolimits_k\Phi^k\subset \Phi$ with $\{\Phi^k\}$ env, and $\{\Phi^k\}$ open graph converges to $\Phi$.
\end{description}
Concerning the hypothesis of Theorem~\ref{MT1}$(b)$, we see that in $(i)$, assumption $u\le e\mbox{-}\!\liminf\nolimits_k u^k$ is weakened and the
    convergence notion for feasible mappings is strengthened. Whereas in $(ii)$, the convergence notion for objective functions is weakened, condition elb is dropped, and
    assumption $\Phi\subset g\mbox{-}\!\liminf\nolimits_k\Phi^k$ is strengthened.
\end{enumerate}
\end{remark}

\section{Applications}\label{sec:4}
We apply the above results to determine the stability properties of two variational problems. Specifically, we examine generalized Nash equilibrium problems and finite-horizon dynamic programming models.

\subsection{Stability of generalized Nash equilibrium problems}
We study the stability of the generalized Nash equilibrium problem (GNEP) that is an extension of the Nash equilibrium problem (NEP).
Contrary to optimization problems, the literature on approximating GNEPs is
not as extensive. Morgan and Raucci~\cite{MR99},
G\"{u}rkan and Pang~\cite{GP09}, and Diem and Khanh~\cite{DK16} primarily focused on approximating NEPs. There is relatively limited exploration in the realm
of GNEPs. Building upon the work of Morgan and Scalzo~\cite{MS08}, we delve into
investigating the convergence of GNEPs. Royset and Wets~\cite{RW19} and Diem and Khanh~\cite{DK24} studied the stability of GNEPs
by using the Nikaido--Isoda bifunction that characterizes the solution of a GNEP. Our approach is straightforward without using such a bifunction.

Let $N$ be a nonempty and finite set of players. Let us assume that each player, labeled by $\nu \in N$, chooses a strategy $x_\nu $ in a strategy set $K_{\nu}\subset\R^{n_\nu}$. We define the Cartesian products $\R^n:=\prod_{\nu\in N}\R^{n_\nu}$ where $n=\sum_{\nu\in N}n_\nu$, $K:=\prod_{\nu\in N} K_{\nu}$, and
$K_{-\nu}:=\prod_{\mu\in N\setminus\{\nu\}} K_{\mu}$ for $\nu\in N$. We write $x = (x_\nu, x_{-\nu}) \in K$ to emphasize the strategy $x_\nu\in K_\nu$ of player $\nu$ and the strategy $x_{-\nu}\in K_{-\nu}$ of the other players.

Given the strategy $x_{-\nu}$ of the players except of player $\nu$, the player $\nu$ chooses a strategy $x_\nu$ solving the following problem:
\begin{equation}\label{NEP}
\max_{x_\nu\in K_\nu} \theta_\nu(x_\nu,x_{-\nu}),
\end{equation}
where $\theta_\nu\colon \R^n\to\R$ is a real-valued function and  $\theta_\nu(x_\nu,x_{-\nu})$ denotes the payoff of player $\nu$ when the rival players have chosen
the strategy $x_{-\nu}$.  A vector $\hat{x}\in K$  is a Nash equilibrium, if $\hat{x}_\nu$ solves~\eqref{NEP} when its rival players take the strategy
$\hat{x}_{-\nu}$ for every $\nu\in N$  (see~\cite{Na51}). We denote by $\nep(\{\theta_\nu,K_\nu\}_{\nu\in N})$ the set of Nash equilibria.

The necessity of a generalization of the NEP arose when involving player interactions at the feasible sets level.  Arrow and Debreu~\cite{AD54} termed it as
abstract economy, but nowadays it is termed the GNEP. Recently, it gained more and more attention because it models electricity
markets, environmental games, bilateral exchanges of bads, among others (see for instance~\cite{AuEtAl16,CoEtAl15,KT16,VZ19}).

Formally, in a GNEP (see~\cite{FaEtAl10}), each player's strategy $x_\nu$ must belong to a set $X_\nu(x_{-\nu})\subset K_\nu$ depending on the rival players'
strategies. The aim of player $\nu$, given the others players' strategies $x_{-\nu}$, is to choose a strategy $x_\nu$ that solves the next maximization problem
\begin{equation}\label{GNEP}
\max_{x_\nu\in  X_\nu(x_{-\nu})} \theta_\nu(x_\nu,x_{-\nu}),
\end{equation}
where $X_\nu\colon\R^{n-n_{\nu}}\tos \R^{n_\nu}$ is a multifunction such that $X_\nu(\R^{n-n_\nu})\subset K_\nu$. A vector $\hat{x}\in K$ is a generalized Nash
equilibrium, if $\hat{x}_\nu$ solves~\eqref{GNEP} when its rival players take the strategy $\hat{x}_{-\nu}$ for every $\nu\in N$; i.e.,
\[
\hat{x}_\nu\in\argmax_{x_\nu\in  X_\nu(\hat{x}_{-\nu})} \theta_\nu(x_\nu,\hat{x}_{-\nu}),\;\,\forall \nu\in N.
\]
We denote by $\gnep(\{\theta_\nu,X_\nu\}_{\nu\in N})$ the set of generalized Nash equilibria.

For each player $\nu\in N$, we define the best response multifunction $S_\nu\colon\R^{n-n_{\nu}}\tos K_\nu$ and the function $v_\nu\colon\R^{n-n_\nu}\to\overline{\R}$ by
\[
S_\nu(x_{-\nu}):=\argmax_{x_\nu\in X_\nu(x_{-\nu})}\theta_\nu(\cdot,x_{-\nu})\;\;\mbox{ and }\;\;v_\nu(x_{-\nu}):=\sup_{x_\nu\in X_\nu(x_{-\nu})}~\theta_\nu(x_\nu,x_{-\nu}).
\]
It is not difficult to verify that
\begin{equation}\label{eq:formula-gnep}
\gnep(\{\theta_\nu,X_\nu\}_{\nu\in N})=\bigcap\nolimits_{\nu\in N}\gra S_\nu.
\end{equation}
We establish convergence properties of the approximations $\{\theta_{\nu}^k\}_{\nu\in N}$ and $\{X_{\nu}^k\}_{\nu\in N}$ of the objective function and of the feasible mapping.

\begin{theorem}\label{GNEP1}
Let $\theta_{\nu}^k\stackrel{c}{\to} \theta_\nu$ and $X_{\nu}^k\stackrel{c}{\to} X_\nu$, for every $\nu\in N$. Then
\begin{description}
\item [$(a)$] $\limsup_k \gnep(\{\theta^k_\nu,X^k_\nu\}_{\nu\in N})\subset \gnep(\{\theta_\nu,X_\nu\}_{\nu\in N})$.
\item [$(b)$] If  $\{X_{\nu}^k\}$ is elb and env for every $\nu\in N$, then $v_{\nu}^k\stackrel{c}{\to} v_\nu$ for every $\nu\in N$.
\end{description}
\end{theorem}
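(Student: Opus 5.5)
The plan is to reduce both parts to Theorem~\ref{MT0}, applied player by player with parameter $y=x_{-\nu}\in\R^{n-n_\nu}$, decision variable $x_\nu\in\R^{n_\nu}$, objective $u^k(x_\nu,x_{-\nu}):=\theta^k_\nu(x_\nu,x_{-\nu})$ and feasible mapping $\Phi^k:=X^k_\nu$. The approximate best responses and values are then $S^k_\nu$ and $v^k_\nu$, and the limits are $S_\nu$ and $v_\nu$. The hypotheses $\theta^k_\nu\stackrel{c}{\to}\theta_\nu$ and $X^k_\nu\stackrel{c}{\to}X_\nu$ are exactly those of Theorem~\ref{MT0}, since splitting the variable as $(x_\nu,x_{-\nu})$ is just a coordinate permutation and preserves continuous convergence.

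For part $(a)$, let $x\in\limsup_k \gnep(\{\theta^k_\nu,X^k_\nu\}_{\nu\in N})$. Then there is a subsequence $x^{k_j}\to x$ with $x^{k_j}$ a generalized Nash equilibrium of the $k_j$-th game. By formula~\eqref{eq:formula-gnep}, for each $\nu$ we have $(x^{k_j}_{-\nu},x^{k_j}_\nu)\in\gra S^{k_j}_\nu$. This point converges to $(x_{-\nu},x_\nu)$, so it lies in $\limsup_k\gra S^k_\nu=\gra(g\mbox{-}\!\limsup_k S^k_\nu)$. Theorem~\ref{MT0}$(a)$ gives $g\mbox{-}\!\limsup_k S^k_\nu\subset S_\nu$, hence $x_\nu\in S_\nu(x_{-\nu})$ for every $\nu$. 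Applying~\eqref{eq:formula-gnep} once more gives $x\in\gnep(\{\theta_\nu,X_\nu\}_{\nu\in N})$.

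For part $(b)$, Theorem~\ref{MT0}$(b)$ already yields $v^k_\nu\stackrel{c}{\to}v_\nu$ from the elb assumption alone, so env is not needed at this step. It would matter only for a nonemptiness conclusion such as Theorem~\ref{MT0}$(c)$. I will therefore invoke Theorem~\ref{MT0}$(b)$ for each $\nu$ and note that env is superfluous.

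The only point requiring care is bookkeeping. First, the graph of $S_\nu$ is taken in the ordering $(x_{-\nu},x_\nu)$, so~\eqref{eq:formula-gnep} must be read with this identification. Second, the Painlev\'e--Kuratowski outer limit of the equilibrium sets passes to a subsequence, and the outer limit of the graphs has to absorb this. It does so directly, because $\limsup_k$ of sets is defined through subsequences. I do not expect a genuine obstacle.
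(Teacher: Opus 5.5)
Your proposal is correct and follows the paper's proof. For $(a)$ the paper likewise applies Theorem~\ref{MT0}$(a)$ to each player's best-response mapping and combines it with~\eqref{eq:formula-gnep} via $\limsup_k\bigcap_{\nu\in N}\gra S^k_\nu\subset\bigcap_{\nu\in N}\limsup_k\gra S^k_\nu$, which you verify elementwise, and for $(b)$ it cites Theorem~\ref{MT0}$(b)$ directly. Your remark that env is unused in $(b)$ is also accurate, since Theorem~\ref{MT0}$(b)$ requires only elb.
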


\begin{proof} $(a)$ By Theorem~\ref{MT0}$(a)$ and Remark~\ref{rem:continuous2}, we have $\limsup_k (\gra S_\nu^k)\subset  \gra S_\nu$ for all $\nu\in N$. From this and~\eqref{eq:formula-gnep}, we deduce that
\begin{align*}
\limsup\nolimits_k \gnep(\{\theta^k_\nu,X^k_\nu\}_{\nu\in N})
&=\limsup\nolimits_k\bigcap\nolimits_{\nu\in N}\gra\, S^k_\nu\\
&\subset \bigcap\nolimits_{\nu\in N}\limsup\nolimits_k(\gra\, S_\nu^k)\\
&\subset \bigcap\nolimits_{\nu\in N} \gra\, S_\nu\\
&=  \gnep(\{\theta_\nu,X_\nu\}_{\nu\in N}).
\end{align*}

$(b)$ It follows from Theorem~\ref{MT0}$(b)$.
\end{proof}

As a consequence of Theorem~\ref{GNEP1}, we recover \cite[Proposition~9]{DK16} and \cite[Proposition~9]{DK24}.

\begin{corollary}\label{cor:nep}
If $\theta_{\nu}^k\stackrel{c}{\to} \theta_\nu$ and $K_\nu^k\to K_\nu$ for every $\nu\in N$, then
\[
\limsup\nolimits_k \nep(\{\theta^k_\nu,K^k_\nu\}_{\nu\in N})\subset \nep(\{\theta_\nu,K_\nu\}_{\nu\in N}).
\]
\end{corollary}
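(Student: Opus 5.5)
The corollary is the special case of Theorem~\ref{GNEP1}$(a)$ in which each feasible mapping is constant. For each player $\nu\in N$ I set $X^k_\nu(x_{-\nu}):\equiv K^k_\nu$ and $X_\nu(x_{-\nu}):\equiv K_\nu$; these are unexplicit mappings $\Psi^k_2\equiv C_k$, $\Psi_2\equiv C$ from $\R^{n-n_\nu}$ to $\R^{n_\nu}$. With these choices the GNEP reduces to the NEP: for every $x_{-\nu}$, problem~\eqref{GNEP} coincides with~\eqref{NEP}. Hence
\[
\gnep(\{\theta^k_\nu,X^k_\nu\}_{\nu\in N})=\nep(\{\theta^k_\nu,K^k_\nu\}_{\nu\in N})\quad\text{and}\quad \gnep(\{\theta_\nu,X_\nu\}_{\nu\in N})=\nep(\{\theta_\nu,K_\nu\}_{\nu\in N}).
\]
The condition $X_\nu(\R^{n-n_\nu})\subset K_\nu$ holds trivially, and likewise for the approximating problems with $K^k_\nu$.

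It remains to check the hypothesis $X^k_\nu\stackrel{c}{\to}X_\nu$ of Theorem~\ref{GNEP1}. By the characterization of continuous convergence following Remark~\ref{rem:continuous2}, I need $X_\nu\subset c\mbox{-}\!\liminf_k X^k_\nu$ and $g\mbox{-}\!\limsup_k X^k_\nu\subset X_\nu$. The assumption $K^k_\nu\to K_\nu$ gives $K_\nu\subset\liminf_k K^k_\nu$ and $\limsup_k K^k_\nu\subset K_\nu$, so Proposition~\ref{prop:ex-feas-mappings}$(b)$ yields exactly these two inclusions. A direct check is also immediate: for any $y^k\to y$ we have $X^k_\nu(y^k)=K^k_\nu\to K_\nu=X_\nu(y)$. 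Together with $\theta^k_\nu\stackrel{c}{\to}\theta_\nu$, all hypotheses of Theorem~\ref{GNEP1}$(a)$ are satisfied.

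Applying Theorem~\ref{GNEP1}$(a)$ and substituting the identifications above then gives the stated inclusion. No real obstacle is expected. The only point requiring attention is that the GNEP setting formally requires $X_\nu$ to map into a fixed $K_\nu$, whereas $X^k_\nu$ maps into $K^k_\nu$. This is harmless: the set $K$ plays no role in the definition of an equilibrium beyond containing the feasible sets, and any equilibrium automatically lies in $\prod_\nu K^k_\nu$ (respectively $\prod_\nu K_\nu$).
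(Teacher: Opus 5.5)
Your proposal is correct and matches the derivation the paper intends: take constant feasible mappings $X^k_\nu\equiv K^k_\nu$ and $X_\nu\equiv K_\nu$, use Proposition~\ref{prop:ex-feas-mappings}$(b)$ (or the direct check) to get $X^k_\nu\stackrel{c}{\to}X_\nu$ from $K^k_\nu\to K_\nu$, and apply Theorem~\ref{GNEP1}$(a)$, which needs no elb or env assumptions. Your remark that the ambient product set plays no role, because any equilibrium automatically satisfies $\hat x_\nu\in X_\nu(\hat x_{-\nu})$, correctly handles the only formal point.
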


\begin{remark}\label{rem:GNEP}
Concerning Theorem~\ref{GNEP1} and Corollary~\ref{cor:nep}, a few remarks are needed.
\begin{enumerate}
\item Royset and Wets~\cite[Proposition~4.12]{RW19}
inferred part $(a)$ of Theorem~\ref{GNEP1} through the use of approximating Nikaido--Isoda bifunctions.
\item Corollary~\ref{cor:nep} is closely related to \cite[Theorem~1]{GP09}.
Instead of
using multi-epi\-convergence as in~\cite{GP09},
we use continuous convergence. Moreover, in contrast to \cite[Theorem~1]{GP09}, our sequence of constraint sets is not necessarily constant.
\item Corollary~\ref{cor:nep} is also related to \cite[Theorem~1]{MS08}. In place of the pseudocontinuity framework employed in~\cite{MS08}, our analysis is conducted within the setting of continuous convergence.
\end{enumerate}
\end{remark}

A natural question is whether, under the hypothesis of Theorem~\ref{GNEP1}, one has
$$\gnep(\{\theta_\nu,X_\nu\}_{\nu\in N})\subset \liminf\nolimits_k\gnep(\{\theta_\nu^k,X^k_\nu\}_{\nu\in N})?$$
The answer to this question is negative, as shown in~\cite[Example~1.1]{MR99} for a {\rm NEP}.

\subsection{Stability of finite-horizon dynamic programming models}

We study the stability of the finite-horizon discrete-time discounted dynamic programming model under certainty.
Dynamic programming models have been widely used by a number of authors in various well-known papers on economic theory as Arrow et al.~\cite{ArEtAl51},  Brock and
Mirman~\cite{BM72}, Kydland and Prescott~\cite{KP82}, Lucas~\cite{Lu78}, and Lucas and Prescott~\cite{LP71}, among others. Discrete dynamic programming  models have often been useful to address some discrete optimal control problems as done by Guigue et al.~\cite{GuEtAl09}, Ha et al.~\cite{HaEtAl21},  Murray and Yakowitz~\cite{MY81}, and
Ramadge and Wonham~\cite{RW23}, among others.

We deal with a finite-horizon version of the model formulated in Stokey and Lucas~\cite{SL89}. This model is formulated as follows: for a given  $y\in\R^m$, we want to find $y^*_1,\dots,y^*_T\in \R^{m}$ with   $y^*_{j+1}\in\Gamma(y_j^*)$ for all $j\in\{0,\dots,T-1\}$, where $y^*_0=y$, such that
\begin{equation}\label{eq:model-dpm}
\sum_{j=0}^{T-1}\beta^jv(y^*_j,y^*_{j+1})=\sup\left\{
\sum_{j=0}^{T-1}\beta^jv(y_j,y_{j+1})\colon
\begin{matrix}
y_1,\dots,y_{T}\in \R^m,\\
y_0=y,\; y_{j+1}\in\Gamma(y_j),\\
\forall j\in\{0,\dots,T-1\}
\end{matrix}
\right\}.
\end{equation}
Here $\beta\in\,]0,1]$ is the discount rate, $v:\R^m\times\R^m\to \R$ is the return function, and $\Gamma:\R^m\tos \R^m$ is a nonempty-valued multifunction describing the constraints.

Goberna and Todorov~\cite{GT09} studied the stability of a dynamic programming
model where the time-dependent discount rate is perturbed, whereas the return function and the feasibility constraints remain
fixed and are assumed to be linear. On the contrary, our stability analysis assumes that the discount rate is constant, whereas the return function and the multifunction describing the feasibility constraints are perturbed. Granzotto et al.~\cite{G-etal21} introduced a model related to model~\eqref{eq:model-dpm} with a bounded time-dependent return function. They studied the stability of their model; however, because it is unconstrained, no perturbed feasible constraints are required, in contrast to our analysis.

To translate model~\eqref{eq:model-dpm} to our framework, we introduce the following multifunction $\Phi_\Gamma\colon\R^m\tos \R^{m\times T}$ defined as
\[
\Phi_\Gamma(y) :=\left\lbrace {\bf x}\in \R^{m\times T} \colon
\begin{matrix}
{\bf x}=(y_1,y_2,\dots,y_T)\mbox{ and }\\
y_{j+1}\in\Gamma(y_j),\forall j\in\{0,\dots,T-1\}\\
\mbox{with } y_0=y
\end{matrix}
\right\rbrace
\]
and the function $f:\R^{m\times T}\times \R^m\to\R$ as
\[
f({\bf x},y):=v(y,y_1)+\sum_{j=1}^T\beta^jv(y_j,y_{j+1}), \mbox{ where }{\bf x}=(y_1,y_2,\dots,y_T).
\]
Thus,
the value function $\mu\colon\R^m\to\R\cup\{+\infty\}$ is given by
\[
\mu(y)=\sup_{{\bf x}\in \Phi_\Gamma(y)}f({\bf x},y),
\]
and the solution mapping $\Lambda\colon \R^m\tos\R^{m\times T}$ is
$$
 \Lambda(y) = \left\{{\bf x}\in \Phi_\Gamma(y)\colon \mu(y)=\sum_{j=0}^{T-1}\beta^jv(y_j,y_{j+1}),\mbox{ with }y_0=y\right\}.
$$

\begin{remark}
According to the Berge theorem, if $v$ is continuous and $\Phi_{\Gamma}$ is nonempty-valued, compact-valued, $K$-continuous, then $\mu$ is continuous and $\Lambda$ is nonempty-valued, compact-valued, and usc. Thus, continuity of $\Phi_\Gamma$ can be guaranteed by continuity of $\Gamma$. In addition, if $\Gamma$ is single-valued, then $\Phi_\Gamma$ and $\Lambda$ are single-valued.
\end{remark}

To perturb model~\eqref{eq:model-dpm}, we consider a nonempty-valued mapping $\Gamma_k\colon \R^m\tos \R^m$  describing the approximate feasible mappings, $v^k\colon\R^m\times\R^m\to \R$ is the approximate return function, and $\mu^k\colon \R^m\to
\R$ is the approximate value function,
for every $k\in\mathbb{N}$.
For each $k\in\nn$, we define  $\Phi_{\Gamma_k}:\R^m\tos \R^{m\times T}$ as
\[
\Phi_{\Gamma_k}(y) :=\left\lbrace {\bf x}\in \R^{m\times T} \colon
\begin{matrix}
{\bf x}=(y_1,y_2,\dots,y_T)\mbox{ and }\\
y_{j+1}\in\Gamma_k(y_j),\forall j\in\{0,\dots,T-1\}\\
\mbox{with } y_0=y
\end{matrix}
\right\rbrace
\]
and the approximate solution mapping $\Lambda^k\colon\R^m\tos\R^{m\times T}$ by
\[
\Lambda^k(y) = \left\{{\bf x}\in \Phi_{\Gamma_k}(y)\colon \mu^k(y)=\sum_{j=0}^{T-1}\beta^jv^k(y_j,y_{j+1}),\mbox{ with }y_0=y\right\}.
\]

We consider the sequence of functions $f^k$, associated to function $f$, defined as
\[
f^k({\bf x},y):=v^k(y,y_1)+\sum_{j=1}^T\beta^jv^k(y_j,y_{j+1}), \mbox{ where }{\bf x}=(y_1,y_2,\dots,y_T).
\]

We derive properties of the sequence of feasible mappings $\{\Phi_{\Gamma_ k}\}$. These properties complement the convergence results for feasible mappings in Propositions~\ref{prop:ex-feas-mappings}--\ref{l2}. To do this, we define the set $\Theta:=\R^m\times\{(w,w)\colon w\in \R^{m\times(T-1)}\}\times\R^m$ and the following mappings for $j\in\{1,\ldots ,T\}$:
$$\Gamma^{T}:=\underbrace{\Gamma\times \cdots\times \Gamma}_{\mbox{$T$-times}}\;\; \mbox{ and }\;\;\Gamma^{(j)}:=\underbrace{\Gamma\circ \cdots\circ \Gamma}_{\mbox{$j$-times}}.$$
Note that $\Phi_\Gamma(y)\subset \Gamma^{(1)}(y)\times\cdots\times\Gamma^{(T)}(y)$.

\begin{proposition}\label{prop:propertie-phi-gamma} Let $\Gamma$ be nonempty-valued and $\{\Gamma_k\}$ be env. Then
\begin{description}
\item[$(a)$] If $\Gamma_ k \stackrel{c}{\to} \Gamma$, then $\Phi_{\Gamma_ k} \stackrel{c}{\to}\Phi_{\Gamma}$.
\item[$(b)$] If $g\mbox{-}\!\limsup\nolimits_k\Gamma_k\subset\Gamma$, then $g\mbox{-}\!\limsup\nolimits_k\Phi_{\Gamma_ k} \subset\Phi_{\Gamma}$.
\item[$(c)$] If $\operatorname{gph}(\Gamma^{T})\cap\Theta\subset\liminf\nolimits_k(\operatorname{gph}(\Gamma_k^{T})\cap\Theta)$, then $\Phi_{\Gamma}\subset g\mbox{-}\!\liminf\nolimits_k\Phi_{\Gamma_ k}$.
\item[$(d)$] If $\{\Gamma_ k\}$ is eub, then $\{\Phi_{\Gamma_k}\}$ is eub.
\item[$(e)$] If $\{\Gamma_ k^{(j)}\}$ is elb for every $j\in\{1,\ldots ,T\}$, then $\{\Phi_{\Gamma_k}\}$ is elb.
\item[$(f)$] $\{\Phi_{\Gamma_k}\}$ is env.
\end{description}
\end{proposition}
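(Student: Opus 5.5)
All six parts are handled directly from the definition of $\Phi_{\Gamma_k}$, which I write as a chain condition: ${\bf x}=(y_1,\dots,y_T)\in\Phi_{\Gamma_k}(y)$ iff $y_{j+1}\in\Gamma_k(y_j)$ for $j=0,\dots,T-1$ with $y_0=y$. Equivalently, $(y,y_1,y_1,y_2,y_2,\dots,y_{T-1},y_{T-1},y_T)\in\operatorname{gph}(\Gamma_k^T)\cap\Theta$. Each convergence statement is proved through the pointwise characterizations \eqref{eq:g-liminf}--\eqref{eq:c-liminf-phi}, working one index $j$ at a time.

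\emph{Parts $(f)$, $(d)$, $(e)$.} For $(f)$, take $N$ such that $\Gamma_k$ is nonempty-valued for $k\ge N$. Given any $y$, choose successively $y_1\in\Gamma_k(y)$, $y_2\in\Gamma_k(y_1)$, and so on; this produces an element of $\Phi_{\Gamma_k}(y)$. For $(d)$, if $\Gamma_k(\R^m)\subset r\mathbb{B}$ for $k\ge N$, then every component $y_j$ lies in $r\mathbb{B}$, so $\Phi_{\Gamma_k}(\R^m)\subset (r\mathbb{B})^T$. For $(e)$, use the inclusion $\Phi_{\Gamma_k}(y^k)\subset\Gamma_k^{(1)}(y^k)\times\cdots\times\Gamma_k^{(T)}(y^k)$. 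Each factor is eventually bounded along $y^k\to y$ by the elb hypothesis on $\{\Gamma_k^{(j)}\}$. A finite product of eventually bounded sequences is eventually bounded.

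\emph{Part $(b)$.} Let $(y,{\bf x})\in\limsup_k\operatorname{gph}\Phi_{\Gamma_k}$. Then there are $(y^{k_i},{\bf x}^{k_i})\to(y,{\bf x})$ with $y^{k_i}_{j+1}\in\Gamma_{k_i}(y^{k_i}_j)$. For each $j$, $(y^{k_i}_j,y^{k_i}_{j+1})\to(y_j,y_{j+1})$, so this pair lies in $\limsup_k\operatorname{gph}\Gamma_k\subset\operatorname{gph}\Gamma$. Hence ${\bf x}\in\Phi_\Gamma(y)$. No boundedness is needed, because all components of ${\bf x}$ are already given as limits, unlike in Proposition~\ref{l1}$(a)$.

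\emph{Part $(c)$.} If ${\bf x}\in\Phi_\Gamma(y)$, the associated doubled vector lies in $\operatorname{gph}(\Gamma^T)\cap\Theta$. By hypothesis there is a sequence in $\operatorname{gph}(\Gamma_k^T)\cap\Theta$ converging to it. Membership in $\Theta$ forces the repeated coordinates to coincide. Reading off $(y^k,y^k_1,\dots,y^k_T)$ therefore gives $y^k_{j+1}\in\Gamma_k(y^k_j)$ with $y^k_0=y^k$, that is, $(y^k,{\bf x}^k)\in\operatorname{gph}\Phi_{\Gamma_k}$ converging to $(y,{\bf x})$. The conclusion follows by \eqref{eq:g-liminf}. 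The role of $\Theta$ is exactly to make the approximating chains consistent, which is the main point to verify carefully.

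\emph{Part $(a)$.} The outer inclusion follows from $(b)$, since by Remark~\ref{rem:continuous2} continuous convergence gives $g\mbox{-}\!\limsup_k\Gamma_k\subset\Gamma$. For the inner part, let $y^k\to y$ and ${\bf x}\in\Phi_\Gamma(y)$; I build the chain inductively. Since $y_1\in\Gamma(y)\subset\liminf_k\Gamma_k(y^k)$, pick $y_1^k\in\Gamma_k(y^k)$ with $y_1^k\to y_1$. Then $y_2\in\Gamma(y_1)\subset\liminf_k\Gamma_k(y_1^k)$ by \eqref{eq:c-liminf-phi}, so pick $y_2^k\in\Gamma_k(y_1^k)$ with $y_2^k\to y_2$. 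Continuing for $T$ steps gives ${\bf x}^k\in\Phi_{\Gamma_k}(y^k)$ with ${\bf x}^k\to{\bf x}$. This is the iterated version of Proposition~\ref{l1}$(b)$. The only subtlety is that $\liminf$ membership supplies points only for large $k$; for small $k$ I fill in arbitrarily using $(f)$, which is harmless for convergence.
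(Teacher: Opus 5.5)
Your proposal is correct and follows the paper's proof essentially part by part: the inductive chain construction for $(a)$, coordinatewise outer limits of $\operatorname{gph}\Gamma_k$ for $(b)$, the $\Theta$-consistency device for $(c)$, and the inclusion $\Phi_{\Gamma_k}(y)\subset\Gamma_k^{(1)}(y)\times\cdots\times\Gamma_k^{(T)}(y)$ for $(e)$; the only difference is that you obtain the outer half of $(a)$ from $(b)$ rather than redoing it ``similarly''. One notational slip to fix: with $\Gamma^T(y_0,\dots,y_{T-1})=\Gamma(y_0)\times\cdots\times\Gamma(y_{T-1})$ and $\Theta=\R^m\times\{(w,w)\}\times\R^m$, the point of $\operatorname{gph}(\Gamma^T)\cap\Theta$ attached to ${\bf x}$ is $(y,z,z,y_T)$ with $z=(y_1,\dots,y_{T-1})$, not the interleaved vector $(y,y_1,y_1,\dots,y_{T-1},y_{T-1},y_T)$, which in general is not in $\Theta$ when $T\ge 3$.
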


\begin{proof} $(a)$ Let $y^k\to y$ be fixed. First, we check that $\Phi_{\Gamma}(y)\subset\liminf_k \Phi_{\Gamma_ k}(y^k)$. Let ${\bf x}\in\Phi_{\Gamma}(y)$. As $\Gamma(y)\subset\liminf_k \Gamma_k(y^k)$, there exists $y^k_1\in\Gamma_k(y^k)\to y_1\in \Gamma(y)$. As $\Gamma(y_1)\subset\liminf_k\Gamma_k(y^k_1)$, similarly, there exists $y^k_2\in\Gamma_k(y^k_1)\to y_2\in\Gamma(y_1)$. Proceeding inductively, we prove that there exists $y^k_T\in\Gamma_k(y^k_{T-1})\to y_T\in \Gamma(y_{T-1})$. Hence ${\bf x}^k:=(y_1^k,\ldots ,y_T^k)\in \Phi_{\Gamma_k}(y^k)\to {\bf x}$; i.e., ${\bf x}\in\liminf_k \Phi_{\Gamma_k}(y^k)$ and the inclusion follows. Similarly, we can check that $\limsup_k \Phi_{\Gamma_k}(y^k)\subset \Phi_\Gamma(y)$. Therefore, $\Phi_{\Gamma_k}(y^k)\to\Phi_\Gamma(y)$ and since $y^k\to y$ was arbitrary, we conclude that $\Phi_{\Gamma_ k} \stackrel{c}{\to}\Phi_{\Gamma}$.

$(b)$ We check that $\limsup_k\operatorname{gph}\Phi_{\Gamma_k}\subset \operatorname{gph}\Phi_\Gamma$. If $(y,{\bf x})$ is in the left-side set, then there exists $(y^{k_\ell},{\bf x}^{k_\ell})\in\operatorname{gph}\Phi_{\Gamma_{k_\ell}}\to (y,{\bf x})$. Since ${\bf x}^{k_\ell}=(y^{k_\ell}_1,\ldots ,y^{k_\ell}_T)$, $y^{k_\ell}_{j+1}\in\Gamma_{k_\ell}(y^{k_\ell}_{j})$ with $y^{k_\ell}_{0}=y^{k_\ell}$ and $(y^{k_\ell}_{j},y^{k_\ell}_{j+1})\in \operatorname{gph}\Gamma_{k_\ell}\to (y_j,y_{j+1})$ for $j\in\{0,1,\ldots ,T-1\}$, we have $(y_j,y_{j+1})\in\limsup_k\operatorname{gph}\Gamma_{k}$ that by hypothesis implies $(y_j,y_{j+1})\in \operatorname{gph}\Gamma$ for such $j$. Hence $(y,{\bf x})\in \operatorname{gph}\Phi_\Gamma$.

$(c)$ If $(y,{\bf x})\in\operatorname{gph}\Phi_\Gamma$, then ${\bf x}\in\Phi_\Gamma(y)$ where ${\bf x}=(y_1,\ldots ,y_T)$ and $y_{j+1}\in\Gamma(y_{j})$ for $j\in\{0,1,\ldots ,T-1\}$ with $y_{0}=y$. Hence $(y_1,\ldots ,y_T)\in \Gamma^{T}(y,y_1,\ldots ,y_{T-1})$, so by setting $z:=(y_1,\ldots ,y_{T-1})$, we have $(y,z,z,y_T)\in \operatorname{gph}(\Gamma^{T})\cap\Theta$. By hypothesis there exists  $(y^k,z^k,z^k,y_T^k)\in \operatorname{gph}(\Gamma_k^{T})\cap\Theta\to (y,z,z,y_T)$ where $z^k=(y_1^k,\ldots ,y_{T-1}^k)$. Let ${\bf x}^k:=(y_1^k,\ldots ,y_T^k)$. As $y_{j+1}^k\in\Gamma_k(y_{j}^k)$ for $j\in\{0,1,\ldots ,T-1\}$ with $y_{0}^k=y^k$, we have $(y^k,{\bf x}^k)\in\operatorname{gph}\Phi_{\Gamma_k}\to (y,{\bf x})$. Hence $(y,{\bf x})\in\liminf_k \operatorname{gph}\Phi_{\Gamma_k}$.

$(d)$ As $\{\Gamma_k\}$ is eub, there exist $N\in\mathbb{N}$ and $r>0$ such that $\Gamma_k(\R^m)\subset r\mathbb{B}$ for all $k\ge N$. Let us take arbitrary $y\in\R^m$ and $k\ge N$. If ${\bf x}\in \Phi_{\Gamma_k}(y)$, then ${\bf x}=(y_1^k,\ldots ,y_T^k)$ with
$y_1^k\in\Gamma_k(y)$, and $y^k_{j+1}\in\Gamma_k(y_j^k)$ for $j\in\{1,2,\ldots, T-1\}$. Hence ${\bf x}\in r\mathbb{B}\times\cdots\times r\mathbb{B}$ for all $k\ge N$ and since $y$ was arbitrary, we have $\Phi_{\Gamma_k}(\R^m)\subset r\mathbb{B}\times\cdots\times r\mathbb{B}$ for all $k\ge N$.

$(e)$ Let $y\in\R^m$ be arbitrary and $j\in\{1,\ldots ,T\}$. As $\{\Gamma_k^{(j)}\}$ is elb at $y$, there exists $N_j\in\mathbb{N}$, a neighborhood $U_j$ of $y$, and $r_j>0$ such that $\Gamma_k^{(j)}(U_j)\subset r_j\mathbb{B}$ for all $k\ge N_j$. Thus, for $N=\max_{1\le j\le T}N_j$ and $U=\cap_{j=1}^{T}U_j$, we have
$$\Phi_{\Gamma_k} (U)\subset\Gamma_k^{(1)}(U)\times\cdots\times\Gamma_k^{(T)}(U)\subset r_1\mathbb{B}\times\cdots\times r_{T}\mathbb{B},\;\forall k\ge N.$$
Hence $\Phi_{\Gamma_k}$ is elb at $y$ and the result follows since $y$ was arbitrary.

$(f)$ It follows straightforwardly.
\end{proof}

We establish convergence properties of the approximations of the objective function and of the feasible mapping.

\begin{theorem}\label{d1}
Let $\Gamma$ be nonempty-valued, $\{\Gamma_k\}$ be env, $v^ k \stackrel{c}{\to} v$ and $\Gamma_ k \stackrel{c}{\to} \Gamma$. Then
\begin{description}
\item[$(a)$] $g\mbox{-}\!\limsup\nolimits_ k  \Lambda^ k\subset \Lambda$.
\item[$(b)$] Let $\{\Gamma_ k^{(j)}\}$ be elb for every $j\in\{1,\ldots ,T\}$. Then
\begin{description}
\item[$(i)$] $\mu^ k \stackrel{c}{\to} \mu$, $\{\Lambda^k\}$ is elb, and $\Lambda$ is nonempty-valued, locally bounded, osc.
\item[$(ii)$] If $\Gamma_k$ is osc and $v^k$ is usc for all $k$, then
$\{\Lambda^k\}$ is selb and $g\mbox{-}\!\limsup\nolimits_ k  \Lambda^ k$ is nonempty-bounded-valued.
\end{description}
\end{description}
\end{theorem}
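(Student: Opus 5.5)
The plan is to reduce everything to Theorem~\ref{MT0} applied to the data $u^k:=f^k$, $u:=f$ and $\Phi^k:=\Phi_{\Gamma_k}$, $\Phi:=\Phi_\Gamma$. With these choices $\mu^k$, $\mu$, $\Lambda^k$, $\Lambda$ are exactly the value functions and solution mappings $v^k$, $v$, $S^k$, $S$ of \eqref{eq:aprox-val-funct-sol-mult}. So the two hypotheses of Theorem~\ref{MT0} have to be checked.

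\emph{Step 1: $f^k\stackrel{c}{\to}f$.} Take $({\bf x}^k,y^k)\to({\bf x},y)$. Then each pair $(y^k_j,y^k_{j+1})$ converges to $(y_j,y_{j+1})$, with $y^k_0=y^k$. By $v^k\stackrel{c}{\to}v$, every summand converges. Since $f^k$ is a finite sum of real-valued terms, $f^k({\bf x}^k,y^k)\to f({\bf x},y)$.

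\emph{Step 2: $\Phi_{\Gamma_k}\stackrel{c}{\to}\Phi_\Gamma$.} This is Proposition~\ref{prop:propertie-phi-gamma}$(a)$, using that $\Gamma$ is nonempty-valued and $\{\Gamma_k\}$ is env.

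With Steps 1 and 2, part $(a)$ is Theorem~\ref{MT0}$(a)$. For $(b)(i)$, Proposition~\ref{prop:propertie-phi-gamma}$(e)$ gives that $\{\Phi_{\Gamma_k}\}$ is elb. Then Theorem~\ref{MT0}$(b)$ yields $\mu^k\stackrel{c}{\to}\mu$ and that $\{\Lambda^k\}$ is elb.

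For the properties of $\Lambda$, I would invoke Corollary~\ref{cor:berge-variant}$(c)$ with $f$ and $\Phi_\Gamma$. The function $f$ is continuous as a continuous-convergence limit (Remark~\ref{rem:h-conv}$(1)$), and $\Phi_\Gamma$ is continuous by Remark~\ref{rem:continuous2}$(2)$. Local boundedness of $\Phi_\Gamma$ follows from Proposition~\ref{prop:elb-cont}$(a)$, because $\Phi_\Gamma\subset c\mbox{-}\!\liminf_k\Phi_{\Gamma_k}$ and the sequence is elb. One further ingredient is needed: nonemptiness of $\Phi_\Gamma(y)$, which holds since $\Gamma$ is nonempty-valued and lets us build chains $y_1\in\Gamma(y)$, $y_2\in\Gamma(y_1)$, and so on. Corollary~\ref{cor:berge-variant}$(c)$ then gives that $\Lambda$ is nonempty-valued, locally bounded and osc. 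Here $\Lambda$ is nonempty because $f(\cdot,y)$ is continuous on the nonempty compact set $\Phi_\Gamma(y)$; compactness comes from closedness (continuity) plus local boundedness.

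For $(b)(ii)$, I will apply Theorem~\ref{MT0}$(c)$. This requires three facts.

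1. $\{\Phi_{\Gamma_k}\}$ is selb. It is elb (shown above) and env by Proposition~\ref{prop:propertie-phi-gamma}$(f)$, hence selb.

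2. $\Phi_{\Gamma_k}$ is closed-valued. This is the main point needing care. Take ${\bf x}^i\in\Phi_{\Gamma_k}(y)$ with ${\bf x}^i\to{\bf x}$. Then $(y^i_j,y^i_{j+1})\in\mathrm{gph}\,\Gamma_k$, and since $\Gamma_k$ is osc its graph is closed. Passing to the limit gives $(y_j,y_{j+1})\in\mathrm{gph}\,\Gamma_k$, with $y_0=y$ fixed.

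3. $f^k(\cdot,y)$ is usc. It is a finite sum of the usc functions $v^k$ composed with continuous linear maps, hence usc, since none of the terms is $+\infty$.

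Theorem~\ref{MT0}$(c)$ then gives that $\{\Lambda^k\}$ is selb and that $g\mbox{-}\!\limsup_k\Lambda^k$ is nonempty-bounded-valued.

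The main obstacle is not conceptual. It is bookkeeping in Step 1 and in the closedness claim: one must index correctly the coupling $y^k_0=y^k$ and handle the discount weights. For consistency, I will read $f$ with the sum effectively running to $T-1$, as in $\Lambda$.
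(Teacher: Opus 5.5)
Your proposal is correct and follows the paper's proof essentially step for step: you obtain $f^k\stackrel{c}{\to}f$ and $\Phi_{\Gamma_k}\stackrel{c}{\to}\Phi_\Gamma$ (Proposition~\ref{prop:propertie-phi-gamma}$(a)$), deduce $(a)$ and $(b)(i)$ from Theorem~\ref{MT0}$(a)$--$(b)$, Proposition~\ref{prop:propertie-phi-gamma}$(e)$, Proposition~\ref{prop:elb-cont}$(a)$ and Corollary~\ref{cor:berge-variant}$(c)$, and obtain $(b)(ii)$ from Proposition~\ref{prop:propertie-phi-gamma}$(f)$ and Theorem~\ref{MT0}$(c)$. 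Your explicit checks are sound and only make explicit what the paper takes for granted: nonemptiness of $\Phi_\Gamma(y)$, closedness of $\Phi_{\Gamma_k}(y)$ via the closed graph of $\Gamma_k$, upper semicontinuity of $f^k(\cdot,y)$, and reading the sum defining $f$ only up to $T-1$.
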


\begin{proof} Clearly, $v^ k \stackrel{c}{\to} v$ implies $f^ k \stackrel{c}{\to} f$. On the other hand, $\Gamma_ k \stackrel{c}{\to} \Gamma$ implies $\Phi_{\Gamma_ k} \stackrel{c}{\to}\Phi_{\Gamma}$ by Proposition~\ref{prop:propertie-phi-gamma}$(a)$.

$(a)$  It follows from Theorem~\ref{MT0}$(a)$.

$(b)$ As $\{\Phi_{\Gamma_k}\}$ is elb by Proposition~\ref{prop:propertie-phi-gamma}$(e)$, the first part of item~$(i)$ follows from Theorem~\ref{MT0}$(b)$. As $\Phi_\Gamma$ is locally bounded by Proposition~\ref{prop:elb-cont}$(a)$, $\mu$ and $\Phi_\Gamma$ are continuous by Remarks~\ref{rem:h-conv}$(1)$ and~\ref{rem:continuous2}$(2)$, the second part follows from Corollary~\ref{cor:berge-variant}$(c)$. Part~$(ii)$ follows from Proposition~\ref{prop:propertie-phi-gamma}$(f)$, Theorem~\ref{MT0}$(c)$ and since under the hypotheses each $f^k$ is usc and $\Phi_{\Gamma_k}$ is closed-valued.
\end{proof}

\begin{theorem}\label{dd}
Let $\Gamma$ be nonempty-valued, $\{\Gamma_k\}$ be env, $v^ k \stackrel{c}{\to} v$, $g\mbox{-}\!\limsup\nolimits_k\Gamma_k\subset\Gamma$  and $\operatorname{gph}(\Gamma^{T})\cap\Theta\subset\liminf\nolimits_k(\operatorname{gph}(\Gamma_k^{T})\cap\Theta)$. Then
\begin{description}
    \item[$(a)$] If $\{\Gamma_k\}$ is eub, then $\{\Lambda^k\}$ is eub and~$\Lambda$ is bounded, compact-valued.
    \item[$(b)$] Let $\{\Gamma_ k^{(j)}\}$ be elb for every $j\in\{1,\ldots ,T\}$. Then
    \begin{description}
    \item[$(i)$] $\mu^ k \stackrel{h}{\to} \mu$ and $\{\Lambda^k\}$ is elb.
    \item[$(ii)$] $\forall y\in\R^m$, $\exists y^k\to y$; $\mu^ k(y^k) \to \mu(y)$ and $\limsup_k \Lambda^k(y^k)\subset \Lambda(y)$.
     \item[$(iii)$] If  $\Gamma_k$ osc and $v^k$ usc for all $k$, then for all  $y^k\to y$, $\limsup\nolimits_k \Lambda^k(y^k)$ is nonempty compact and $(g\mbox{-}\!\limsup\nolimits_k \Lambda^k)(y)\cap \Lambda(y)\ne\emptyset$.
    \end{description}
\end{description}
 \end{theorem}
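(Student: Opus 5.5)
The plan is to reduce Theorem~\ref{dd} to Theorem~\ref{MT1} and Corollary~\ref{cor:stability-c-g}. We apply them with objective functions $f^k,f$ and feasible mappings $\Phi_{\Gamma_k},\Phi_\Gamma$, exactly as Theorem~\ref{d1} was reduced to Theorem~\ref{MT0}.

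First, $v^k\stackrel{c}{\to}v$ gives $f^k\stackrel{c}{\to}f$. This holds because each term $v^k(y_j^k,y_{j+1}^k)$ converges to $v(y_j,y_{j+1})$ along any convergent sequence $({\bf x}^k,y^k)\to({\bf x},y)$, and $f^k$ is a finite sum of such terms with fixed weights. (Strictly, the displayed sum for $f$ runs to $T$ and involves $y_{T+1}$; I would read it, as in the definition of $\Lambda$, as $\sum_{j=0}^{T-1}\beta^jv(y_j,y_{j+1})$ with $y_0=y$.) Next, Proposition~\ref{prop:propertie-phi-gamma}$(b)$ and $(c)$ together give
$g\mbox{-}\!\limsup_k\Phi_{\Gamma_k}\subset\Phi_\Gamma\subset g\mbox{-}\!\liminf_k\Phi_{\Gamma_k}$, that is, $\Phi_{\Gamma_k}\stackrel{g}{\to}\Phi_\Gamma$. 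Here $\Gamma$ is nonempty-valued and $\{\Gamma_k\}$ is env, as those items require. Thus the hypotheses of Theorem~\ref{MT1} hold with $u^k=f^k$ and $\Phi^k=\Phi_{\Gamma_k}$, and then $v^k,S^k$ become $\mu^k,\Lambda^k$.

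For $(a)$, Proposition~\ref{prop:propertie-phi-gamma}$(d)$ shows that eub of $\{\Gamma_k\}$ passes to $\{\Phi_{\Gamma_k}\}$. Theorem~\ref{MT1}$(a)$ then gives that $\{\Lambda^k\}$ is eub and $\Lambda$ is bounded and compact-valued. For $(b)$, Proposition~\ref{prop:propertie-phi-gamma}$(e)$ gives that $\{\Phi_{\Gamma_k}\}$ is elb. Items $(i)$ and $(ii)$ are then verbatim Theorem~\ref{MT1}$(b)(i)$ and $(ii)$.

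For $(iii)$ I would invoke Corollary~\ref{cor:stability-c-g}, which needs four things.
\begin{itemize}
\item $\{\Phi_{\Gamma_k}\}$ is elb, which we already have.
\item $\{\Phi_{\Gamma_k}\}$ is env, by Proposition~\ref{prop:propertie-phi-gamma}$(f)$.
\item $\Phi_{\Gamma_k}$ is closed-valued.
\item $f^k(\cdot,y)$ is usc.
\end{itemize}
The last two are the only step requiring a small argument. For closedness, take ${\bf x}^i\in\Phi_{\Gamma_k}(y)$ with ${\bf x}^i\to{\bf x}$. Then $(y_j^i,y_{j+1}^i)\in\operatorname{gph}\Gamma_k$, and osc of $\Gamma_k$ means its graph is closed, so the limit pairs stay in the graph, with $y_0=y$ fixed. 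For upper semicontinuity, $f^k$ is a nonnegatively weighted finite sum of the usc real-valued functions $v^k$ composed with continuous coordinate maps, hence usc in $({\bf x},y)$ and in particular in ${\bf x}$. The corollary then yields that $\limsup_k\Lambda^k(y^k)$ is nonempty compact and $(g\mbox{-}\!\limsup_k\Lambda^k)(y)\cap\Lambda(y)\ne\emptyset$. The main point to watch is that hypotheses phrased for $\Gamma_k$ (env, osc, elb of the iterates) transfer correctly to $\Phi_{\Gamma_k}$. Proposition~\ref{prop:propertie-phi-gamma} and the graph-closedness argument above handle all of these transfers.
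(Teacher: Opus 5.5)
Your proposal is correct and follows the paper's proof essentially verbatim. It obtains $f^k\stackrel{c}{\to}f$ and $\Phi_{\Gamma_k}\stackrel{g}{\to}\Phi_\Gamma$ via Proposition~\ref{prop:propertie-phi-gamma}$(b)$--$(c)$, then applies Theorem~\ref{MT1} and Corollary~\ref{cor:stability-c-g} using items $(d)$--$(f)$ of that proposition, and for $(iii)$ uses closed-valuedness of $\Phi_{\Gamma_k}$ and upper semicontinuity of $f^k$. Your explicit verification of those last two facts, which the paper only asserts, is sound, and so is your reading of the sum defining $f$ as $\sum_{j=0}^{T-1}\beta^j v(y_j,y_{j+1})$ with $y_0=y$.
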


\begin{proof} Clearly, $v^ k \stackrel{c}{\to} v$ implies $f^ k \stackrel{c}{\to} f$. On the other hand, we have $\Phi_{\Gamma_ k} \stackrel{g}{\to}\Phi_{\Gamma}$ by Proposition~\ref{prop:propertie-phi-gamma}$(b)$--$(c)$. Part~$(a)$ follows from Proposition~\ref{prop:propertie-phi-gamma}$(d)$ and Theorem~\ref{MT1}$(a)$. Part~$(b)$ follows from Proposition~\ref{prop:propertie-phi-gamma}$(e)$--$(f)$, Theorem~\ref{MT1}$(b)$, Corollary~\ref{cor:stability-c-g}, and since under the hypotheses of $(iii)$, each $f^k$ is usc and $\Phi_{\Gamma_k}$ is closed-valued.
\end{proof}

\section{Conclusions}\label{sec:5}

Motivated by Berge's maximum theorem, we have investigated the stability of the value function and the solution mapping associated with a parametric optimization problem under data perturbations. To approximate the objective function and the feasible mapping, we employed lower and upper continuous convergence, as well as epi- and hypo-convergence notions for functions, together with lower and upper continuous and graphical convergence notions for multifunctions. The choice of these convergence concepts is particularly advantageous, as they have been extensively studied in the literature; see, for instance,~\cite{At84,HP97,RW09,RW21} where the stability of fundamental mapping operations has been thoroughly analyzed.

Within this framework, we have established convergence properties for several classes of multifunctions, including interval, unexplicit, and inequality mappings, as well as for their compositions and Cartesian products. These results provide a flexible and unified variational setting for studying the stability of parametric optimization problems beyond classical compactness and strong continuity assumptions.

Finally, we applied our theoretical findings to generalized Nash equilibrium problems and finite-horizon dynamic programming models. In both applications, the proposed approach yields direct stability results for equilibria and value functions, thereby highlighting the practical relevance of the variational convergence framework for analyzing approximations of equilibria and optimal policies in the presence of data uncertainty.

\section*{Statements and declarations}

\noindent\textbf{Funding.} This paper has received funding from ANID--Chile through projects Fondecyt 1220687 (L\'opez) and Fondecyt 1200525 (Fierro).

\medskip
\noindent\textbf{Conflict of interest.} The authors declare that they have no conflict of interest.


\begin{thebibliography}{99}

\bibitem{AD54}
Arrow, K.J., Debreu, G.: Existence of an equilibrium for a competitive economy.
Econometrica \textbf{22}(3), 265--290 (1954)

\bibitem{ArEtAl51}
Arrow, K.J., Harris, T., Marschak, J.: Optimal inventory policy.
Econometrica \textbf{19}(3), 250--272 (1951)

\bibitem{At84}
Attouch, H.: Variational Convergence for Functions and Operators.
Pitman, Boston (1984)

\bibitem{AuEtAl16}
Aussel, D., Gupta, R., Mehra, A.: Evolutionary variational inequality
formulation of the generalized Nash equilibrium problem.
J. Optim. Theory Appl. \textbf{169}(1), 74--90 (2016)

\bibitem{Be93}
Beer, G.: Topologies on Closed and Closed Convex Sets. Kluwer, Dordrecht (1993)

\bibitem{Be63}
Berge, C.: Topological Spaces.
Oliver and Boyd Ltd., London (1963)

\bibitem{BM72}
Brock, W.A., Mirman, L.J.: Optimal economic growth and uncertainty: the
discounted case.
J. Econom. Theory \textbf{4}(3), 479--513 (1972)

\bibitem{CaEtAl86}
Cavazzuti, E., Pacchiarotti, N.: Convergence of Nash equilibria.
Boll. Unione Mat. Ital. B \textbf{6}, 266--274 (1986)

\bibitem{CoEtAl15}
Contreras, J., Krawczyk, J.B., Zuccollo, J.: Economics of collective
monitoring: a study of environmentally constrained electricity generators.
Comput. Manag. Sci. \textbf{13}(3), 349--369 (2016)

\bibitem{CPS92}
Cottle, R.W., Pang, J.S., Stone, R.E.: The Linear Complementarity Problem.
SIAM, Philadelphia (2009)

\bibitem{DK16}
Diem, H.T., Khanh, P.Q.: Approximations of optimization-related problems
in terms of variational convergence.
Vietnam J. Math. \textbf{44}(2), 399--417 (2016)

\bibitem{DK24}
Diem, H.T.H., Khanh, P.Q.: Approximations of quasi-equilibria and Nash
quasi-equilibria in terms of variational convergence.
Set-Valued Var. Anal. \textbf{32}(1), 5 (2024)

\bibitem{FaEtAl10}
Facchinei, F., Kanzow, C.: Generalized Nash equilibrium problems.
Ann. Oper. Res. \textbf{175}(1), 177--211 (2010)

\bibitem{GT09}
Goberna, M.A., Todorov, M.I.: Primal-dual stability in continuous linear
optimization.
Math. Program. \textbf{116}(1), 129--146 (2009)

\bibitem{G-etal21}
Granzotto, M., Postoyan, R., Bu\c{s}oniu, L., Ne\v{s}i\'c, D., Daafouz, J.:
Finite-horizon discounted optimal control: stability and performance.
IEEE Trans. Automat. Control \textbf{66}(2), 550--565 (2021)

\bibitem{GuEtAl09}
Guigue, A., Ahmadi, M., Hayes, M.J.D., Langlois, R.G.: A discrete dynamic
programming approximation to the multiobjective deterministic finite horizon
optimal control problem.
SIAM J. Control Optim. \textbf{48}(4), 2581--2599 (2009)

\bibitem{GP09}
G{\"u}rkan, G., Pang, J.S.: Approximations of Nash equilibria.
Math. Program. \textbf{117}(1-2), 223--253 (2009)

\bibitem{HaEtAl21}
Ha, M., Wang, D., Liu, D.: Generalized value iteration for discounted optimal
control with stability analysis.
Systems Control Lett. \textbf{147}, 104847 (2021)

\bibitem{HP97}
Hu, S., Papageorgiou, N.S.: Handbook of Multivalued Analysis, Vol. I: Theory.
Springer, Dordrecht (1997)

\bibitem{KT16}
Krawczyk, J.B., Tidball, M.: Economic problems with constraints: how efficiency
relates to equilibrium.
Int. Game Theory Rev. \textbf{18}(04), 1650011 (2016)

\bibitem{KP82}
Kydland, F.E., Prescott, E.C.: Time to build and aggregate fluctuations.
Econometrica \textbf{50}(6), 1345--1370 (1982)

\bibitem{LM92}
Lignola, M.B., Morgan, J.: Convergences of marginal functions with dependent
constraints.
Optimization \textbf{23}(3), 189--213 (1992)

\bibitem{LM97}
Lignola, M.B., Morgan, J.: Stability of regularized bilevel programming
problems. J. Optim. Theory Appl. \textbf{93}(3), 575--596 (1997)

\bibitem{LS21}
L{\'o}pez, R., Sama, M.: Horizon maps and graphical convergence revisited.
SIAM J. Optim. \textbf{31}(2), 1330--1351 (2021)

\bibitem{Lu78}
Lucas, R.E.: Asset prices in an exchange economy.
Econometrica \textbf{46}(6), 1429--1445 (1978)

\bibitem{LP71}
Lucas, R.E., Prescott, E.C.: Investment under uncertainty.
Econometrica \textbf{39}(5), 659--681 (1971)

\bibitem{MR99}
Morgan, J., Raucci, R.: New convergence results for Nash equilibria.
J. Convex Anal. \textbf{6}(2), 377--385 (1999)

\bibitem{MS08}
Morgan, J., Scalzo, V.: Variational stability of social Nash equilibria.
Int. Game Theory Rev. \textbf{10}(01), 17--24 (2008)

\bibitem{MY81}
Murray, D.M., Yakowitz, S.J.: The application of optimal control methodology to
nonlinear programming problems.
Math. Program. \textbf{21}(1), 331--347 (1981)

\bibitem{Na51}
Nash, J.: Non-cooperative games.
Ann. Math. \textbf{54}(2), 286--295 (1951)

\bibitem{RW23}
Ramadge, P.J., Wonham, W.M.: Supervisory control of a class of discrete event
processes.
SIAM J. Control Optim. \textbf{25}(1), 206--230 (1987)

\bibitem{RW09}
Rockafellar, R.T., Wets, R.J.-B.: Variational Analysis. Springer, New York
(2009)

\bibitem{RW19}
Royset, J.O., Wets, R.J.-B.: Lopsided convergence: an extension and its
quantification.
Math. Program. \textbf{177}(1-2), 395--423 (2019)

\bibitem{RW21}
Royset, J.O., Wets, R.J.-B.: An Optimization Primer. Springer, New York (2022)

\bibitem{SL89}
Stokey, N.L., Lucas, R.E.: Recursive Methods in Economic Dynamics.
Harvard University Press, Cambridge, MA (1989)

\bibitem{TW20}
Tammer, C., Weidner, P.: Scalarization and Separation by Translation Invariant
Functions with Applications in Optimization, Nonlinear Functional Analysis, and
Mathematical Economics. Springer, Berlin (2020)

\bibitem{VZ19}
Vardar, B., Zaccour, G.: Strategic bilateral exchange of a bad.
Oper. Res. Lett. \textbf{47}(4), 235--240 (2019)

\bibitem{Zo84}
Zolezzi, T.: Stability analysis in optimization.
In: Optimization and Related Fields, pp. 397--419. Springer, Berlin (1984)

\end{thebibliography}
\end{document}